\documentclass[11pt]{amsart}

\usepackage{subfiles}
\usepackage{geometry}
\usepackage{fullpage}
\usepackage{amstext}
\usepackage{amsthm}
\usepackage{amssymb}
\usepackage[colorlinks=true, linkcolor=blue, citecolor=green, urlcolor=cyan]{hyperref}

\usepackage{tikz}
\usetikzlibrary{mindmap}
\usepackage{subfigure, bbm}
\usepackage{ulem}

\usepackage[ruled]{algorithm2e}

\newtheorem{theorem}{Theorem}[section]
\newtheorem*{theorem*}{Theorem}

\newtheorem*{theoremA*}{Theorem~A}
\newtheorem*{theoremB*}{Theorem~B}
\newtheorem*{theoremC*}{Theorem~C}

\newtheorem*{problem*}{Problem}
\newtheorem*{corollary*}{Corollary}

\newtheorem{lemma}[theorem]{Lemma}
\newtheorem*{lemma*}{Lemma}
\newtheorem{corollary}[theorem]{Corollary}
\newtheorem{definition}[theorem]{Definition}
\newtheorem*{definition*}{Definition}
\newtheorem{proposition}[theorem]{Proposition}
\newtheorem*{proposition*}{Proposition}

\theoremstyle{definition}
\newtheorem{remark}[theorem]{Remark}
\newtheorem*{remark*}{Remark}

\theoremstyle{plain}

\newcommand{\N}{\mathbb{N}}

\newcommand{\R}{\mathbb{R}}
\newcommand{\Event}{\mathcal{E}}
\def\Prob{{\mathbb P}}

\usepackage{xcolor}
\definecolor{b}{HTML}{4472c4}
\definecolor{o}{HTML}{ED7D31}
\definecolor{g}{HTML}{70ad47}
\definecolor{t}{RGB}{40,154,150}

\def\Id{{\rm Id}}

\def\R{{\mathbb R}}
\def\N{{\mathbb N}}

\def\Prob{{\mathbb P}}
\def\Exp{{\mathbb E}}
\def\Event{{\mathcal E}}

\def\Proj{{\rm Proj}}

\def\Net{{\mathcal N}}

\def\conv{{\rm conv}}

\newcommand{\tref}[1]{\text{\tiny{\ref{#1}}}}
\newcommand{\step}[1]{\smallskip\noindent{\it #1.}}

\title{Cotype of random polytopes}
\author{Han Huang\address{Department of Mathematics, University of Missouri, Columbia}\email{hhuang@missouri.edu} 
and Konstantin Tikhomirov\address{Department of Mathematical Sciences, Carnegie Mellon University}\email{ktikhomi@andrew.cmu.edu}}

\date{\today}

\begin{document}

\maketitle

\begin{abstract}
For $N\geq n$,
let $P_{N,n}$ be a random polytope in $\R^n$
with vertices $\pm X_i$, $1\leq i\leq N$,
where $X_1,\dots,X_N$ are i.i.d standard Gaussian vectors in
$\R^n$.
Random polytopes $P_{N,n}$, as well as their duals,
are classical objects of interest in high-dimensional
convex geometry and local Banach space theory.
In this paper, we provide a 
{\it dimension-independent} bound
on the cotype of the corresponding 
normed space $(\R^n,\|\cdot\|_{P_{N,n}})$,
generated by $P_{N,n}$.
Let $K'\geq K>1$, and assume that
$K'\geq \frac{N}{n}\geq K$.
We show that with probability $1-o(1)$,
for any $k\geq 1$, and any collection 
$y_1,\dots,y_k$ of vectors in $\R^n$,
$$
\Exp_\sigma\,\Big\|\sum_{i=1}^k \sigma_i y_i\Big\|_{P_{N,n}}^q
\geq \frac{1}{C_q^q}\sum_{i=1}^k \big\|y_i\big\|_{P_{N,n}}^q,
$$
where $\sigma=(\sigma_1,\dots,\sigma_k)$
is a vector of random signs,
and where
$q\in [2,\infty)$ and $C_q\in[1,\infty)$
may {\it only} depend on $K,K'$.
We discuss the result in context of infinite-dimensional
Banach spaces.
\end{abstract}


\section{Introduction}

\subsection{Main results}
Let $N\geq n$, and let $X_1,\dots,X_N$
be i.i.d random vectors in $\R^n$.
Define a random symmetric polytope 
\begin{equation}\label{def:polydef}
P_{N,n}:=\conv\big\{\pm X_i,\;1\leq i\leq N\big\}.    
\end{equation}
Polytopes of type \eqref{def:polydef}
for various models of randomness of $X_i$'s
have been actively studied in literature,
and play a fundamental role in
high-dimensional convex geometry
and geometric probability.
We refer, in particular, to books and surveys
\cite{WW93,Barany2004,Schneider2008,Reitzner2010}, \cite[Section~8.2]{SW2008},
as well as research papers
\cite{RS1963,E1965,B1989,Muller89,AS92,BV94,K1994,HMR04,Retzner2005-,Retzner2005,LPRT2005,Vu2005,Vu2006,Bor2009,AGP2015,GKZ20,KZ19,KZ2020,Ka2021} and references therein
for a comprehensive background.
Random polytopes naturally arise in the problem of
approximation of convex bodies, average-case and
smoothed analysis of linear programming,
and serve as (near-)extremizers
within asymptotic theory of normed spaces.

Here, we focus on polytopes
\eqref{def:polydef} generated by i.i.d standard Gaussian
random vectors $X_1,\dots,X_N$.
For that model, or in a closely related setting
of asymmetric Gaussian polytopes $\conv\{X_1,\dots,X_N\}$,
the global geometry and associated parameters
are relatively well studied; in particular, that includes
the expected volume,
absorption probabilities, and the number of
facets \cite{AS92,BV94,KZ19,KZ2020}.
On the other hand, geometry of sections of such polytopes remains
much less explored.
In this work, we make a step
towards characterization of 
the local geometry of $P_{N,n}$ via the notions of
Banach--Mazur distance and cotype.


\smallskip

Recall that the {\it Minkowski functional}
of a centrally-symmetric convex body $\mathcal C$ in $\R^n$
is defined as
$$
\|x\|_{\mathcal C}:=\inf\limits\{\lambda> 0:\,\lambda^{-1} x\in \mathcal C\}.
$$
Since for $N\geq n$, the polytope $P_{N,n}$
has a non-empty interior almost surely, and
with probability one the random functional $\|\cdot\|_{P_{N,n}}$
is a valid norm in $\R^n$.
We recall that, given a normed space ${\bf X}$
(either finite or infinite-dimensional), and
a number $q\in[2,\infty)$,
${\bf X}$
is said to have {\it cotype $q$ with a constant $C_q<\infty$} \cite{MP76}
if 
\begin{equation}\label{eq:cotype}
\Exp_\sigma\,\Big\|\sum_{i=1}^k \sigma_i y_i\Big\|_{{\bf X}}^q
\geq \frac{1}{C_q^q}\sum_{i=1}^k \big\|y_i\big\|_{{\bf X}}^q,
\quad
\mbox{for every $k\geq 1$ and every
$y_1,\dots,y_k$ in ${\bf X}$,}
\end{equation}
where the expectation is taken with respect to the
uniform random sign vector $\sigma=(\sigma_1,\dots,\sigma_k)$.
An infinite-dimensional Banach space ${\bf X}$
has {\it infinite cotype} if \eqref{eq:cotype}
fails for every choice of finite $q$ and $C_q$.
The main result of the paper is the following theorem,
which provides dimension-independent bounds on the cotype
of the random normed spaces generated by $P_{N,n}$:
\begin{theoremA*}[Cotype of Gaussian polytopes]
For every $K'\geq K>1$ there are numbers $q\in[2,\infty)$ and
$C_A,C_q\in[1,\infty)$ depending only on $K,K'$ with the following property.
Let $K'\geq \frac{N}{n}\geq K$, and let $X_1,X_2,\dots,X_N$
be i.i.d standard Gaussian random vectors in $\R^n$.
Let polytope $P_{N,n}$ be defined according to \eqref{def:polydef}.
Then with probability at least $1-\frac{C_A}{n}$,
the normed space $(\R^n,\|\cdot\|_{P_{N,n}})$
has cotype $q$ with constant at most $C_q$.
\end{theoremA*}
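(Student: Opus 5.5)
The plan is to reduce the cotype statement to a finite-dimensional "no uniform copies of $\ell_\infty^m$" statement, and then prove that statement for $P_{N,n}$ using the Gaussian structure. The reduction is the quantitative Maurey--Pisier theorem: for every $m$ there are $q=q(m)<\infty$ and $C=C(m)$ such that any normed space (of any dimension) that contains no subspace $2$-isomorphic to $\ell_\infty^m$ has cotype $q$ with constant at most $C$. Its proof (via Krivine-type/Alon--Milman arguments) gives $q(m)$ and $C(m)$ depending only on $m$, not on the dimension. Theorem~A therefore follows once we show the following. There is $m=m(K,K')$ such that, with probability at least $1-C_A/n$, the space $X=(\R^n,\|\cdot\|_{P_{N,n}})$ contains no $m$-tuple $y_1,\dots,y_m$ with
$$\max_j|a_j|\le\Big\|\sum_j a_jy_j\Big\|_{P_{N,n}}\le 2\max_j|a_j|\quad\text{for all } a\in\R^m.$$

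To rule out such tuples, I would pass to the dual description. Let $G$ be the $n\times N$ matrix with columns $X_i$. Then $X$ is the quotient of $\ell_1^N$ under $G$, and $X^*$ is isometric to the subspace $E=G^\top\R^n\subset\ell_\infty^N$ with the norm $x\mapsto\max_i|\langle X_i,x\rangle|$. A $2$-copy of $\ell_\infty^m$ in $X$ means the cube $Q=\{\sum_j a_jy_j:\ |a_j|\le 1\}$ satisfies $Q\subset 2P_{N,n}$, while each $y_j$ lies outside $P_{N,n}$. Concretely:
\begin{itemize}
\item all $2^m$ vertices $\sum_j\varepsilon_jy_j$ are convex combinations of $\pm 2X_i$;
\item for each $j$ there is a functional $x_j$ with $\max_i|\langle X_i,x_j\rangle|\le1$ and $\langle x_j,y_j\rangle\ge 1$.
\end{itemize}
Dually, the vectors $G^\top x_j\in E$ span a copy of $\ell_1^m$ in $E$ that is $O(1)$-complemented. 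The goal is to show that a random Gaussian subspace of $\ell_\infty^N$ of proportional dimension $n\ge N/K'$ and codimension $N-n\ge(1-1/K)N$ contains no such well-complemented $\ell_1^m$ when $m$ is a large constant.

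The probabilistic part would combine the following steps.
\begin{enumerate}
\item Use standard facts about $G$, for which $K>1$ is exactly what is needed. With probability $1-O(1/n)$, the extreme singular values of $G^\top$ are of order $\sqrt N$. The set $P_{N,n}$ is sandwiched as $c(K)\sqrt{\log}\,B_2^n\subset P_{N,n}\subset C\sqrt n\,B_2^n$ up to constants. Most importantly, every vector $G^\top x$ has a "spread" profile: it has at least $cN$ coordinates of magnitude comparable to $\|G^\top x\|_\infty/\sqrt{\log}$, uniformly over $x$. This is the Gaussian "no sparse vectors in a random proportional subspace" phenomenon, proved by a net argument over the sphere plus small-ball estimates.
\item Observe that an $\ell_1^m$-structure among $G^\top x_1,\dots,G^\top x_m$ in the $\ell_\infty$ norm forces, for every sign pattern $\varepsilon$, a coordinate $i(\varepsilon)$ where the $m$ values $\langle X_{i(\varepsilon)},x_j\rangle$ all have signs $\varepsilon_j$ and are of size $\gtrsim 1/m$ relative to the maximum. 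So $2^m$ distinct coordinates are "aligned" with prescribed sign patterns.
\item Show via a union bound over a net of $m$-dimensional subspaces, whose cardinality is $e^{O(mn\log)}$, that this alignment fails with overwhelming probability. The point is that the spreading from step 1 together with complementation forces the projection $\sum_j\langle\cdot,x_j\rangle y_j$ to have $\ell_\infty\to\ell_\infty$ norm growing with $m$. The mechanism is a Grothendieck/Khintchine-type averaging over signs, which exhibits a lower bound of order $\sqrt m$ divided by a $K$-dependent constant.
\end{enumerate}

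The main obstacle is step 3: getting a lower bound on the complementation constant that grows with $m$ and holds uniformly over all $m$-dimensional subspaces of $E$, with $m$ independent of $n$. A naive union bound over nets of $m$-tuples in $\R^n$ costs $e^{Cmn}$, which is too much against single-coordinate Gaussian probabilities. I would first try to exploit the proportional number of independent rows. For a fixed $m$-dimensional configuration, the event that the projection is bounded in norm must hold simultaneously on $\gtrsim N$ independent Gaussian coordinates. Each coordinate violates it with probability bounded below by $c(m)>0$, which should give a failure probability of $e^{-c(m)N}$. This beats the net cardinality once $N/n\ge K>1$ and $m$ is chosen large. The dependence of $c(m)$ on $m$ versus the net entropy is the delicate balance I expect to require the most care.
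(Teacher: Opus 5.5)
Your outer reduction matches the paper's: Theorem~A is deduced from a dimension-free ``no near-$\ell_\infty^{k_0}$ subspace'' statement via the non-asymptotic Maurey--Pisier theorem. The gap is in your proof of that finite-dimensional statement.

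\textbf{Step~1 is false.} Take $x=X_1/\|X_1\|_2$. Then $\langle X_1,x\rangle=\|X_1\|_2\approx\sqrt n$, while $\langle X_i,x\rangle=O(\sqrt{\log N})$ for $i\neq 1$. So $G^\top x$ has a single coordinate comparable to $\|G^\top x\|_\infty$, not $cN$ of them. What holds uniformly is $\ell_2$-type incompressibility (the analogue of Lemma~\ref{PN:incompcomb}), and that gives no control over $\ell_\infty$-profiles. Such spiky functionals are dual to vectors $y$ with $\|y\|_{P_{N,n}}\ll\|y\|_2$, for example $y=X_1$, whose coefficient vectors $\beta(y)$ are concentrated on a few $X_j$'s. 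Your plan has no mechanism for this regime, and it is exactly where the paper spends most of its effort.

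\textbf{Step~3's counting does not close.} A net of $m$-tuples or $m$-dimensional subspaces of $\R^n$ has cardinality $e^{cmn\log(1/\epsilon)}$. A failure bound $e^{-c(m)N}$, with $N\le K'n$ and a per-coordinate violation probability merely bounded below by $c(m)>0$, cannot beat it; enlarging $m$ makes things worse, not better. Three further problems compound this.
\begin{itemize}
\item The event $Q\subset 2P_{N,n}$ is not an intersection of independent coordinate events, since membership in a convex hull is a joint condition on all the $X_i$.
\item In the spiky regime, a Euclidean net must approximate $y_j$ in $\|\cdot\|_{P_{N,n}}$ at relative precision $\|y_j\|_{P_{N,n}}/\|y_j\|_2$. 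This ratio can be as small as $n^{-1/2}$, which pushes the entropy to $e^{cmn\log n}$.
\item The claimed $\sqrt m$ lower bound from ``Grothendieck/Khintchine averaging'' is unsupported. Step~2 also overclaims: only a constant fraction of the $\langle X_{i(\varepsilon)},x_j\rangle$ need carry the sign $\varepsilon_j$.
\end{itemize}

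For comparison, the paper's net argument (Lemmas~\ref{lem:Grassnets} and~\ref{lem:d_dim_proj}) works because its bad event has probability $e^{-\Omega(s^2d)}$ per coordinate. That event is a Gaussian projection onto a $d$-dimensional subspace having length at least $s\sqrt d$. This gives $e^{-\Omega(dN)}$ overall, which matches the $e^{O(dn)}$ entropy. Even so, the paper applies it only to tuples with $\|y_i\|_{P_{N,n}}\ge Ck^{-1/9}$ (Proposition~\ref{prop:spansofcomp}).

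\textbf{The missing idea} is a deterministic treatment, on a typical realization of $A$, of small ratios $\|\cdot\|_{P_{N,n}}/\|\cdot\|_2$. The paper proceeds as follows.
\begin{enumerate}
\item Incompressibility of zero combinations $\sum_j\beta_jX_j=0$ yields pseudo-incompressibility of $\bigl(\sqrt{\sum_i\beta_h(y_i)^2}\bigr)_h$ (Lemma~\ref{lem:generalPNbasic}).
\item A multiscale dichotomy on $m_\delta(y_i,r)$ follows. Spiky coefficient profiles already force polynomial distortion (Lemma~\ref{lem:spikymdelta}).
\item Flat profiles can be truncated to vectors with comparable norms (Lemma~\ref{lem:stairs}), which returns to the regime where the net argument applies.
\end{enumerate}
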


\begin{remark*}
It is a standard fact that any finite-dimensional normed space
has a finite cotype with constants depending only on the dimension.
We emphasize that in the above theorem, $q$ and $C_q$ {\it do not}
depend on $n$.    
\end{remark*}

\begin{remark*}
The above theorem would follow
immediately if typical realizations of $(\R^n,\|\cdot\|_{P_{N,n}})$
could be uniformly isomorphically embedded into some Banach space known to have
a finite cotype.
Even though the space $(\R^n,\|\cdot\|_{P_{N,n}})$
can be trivially represented as a rank $n$ {\it projection} of $\ell_1^N$,
no relevant result regarding {\it sections} (subspaces) of a
finite cotype Banach space appears to be known.
\end{remark*}

Given any integer $k\geq 1$,
and any pair of $k$--dimensional normed spaces $E,F$,
the {\it Banach--Mazur} distance between $E$ and $F$ 
is defined as
$$
d_{BM}(E,F)=\inf\limits_T\,\|T\|_{E\to F}\cdot\|T^{-1}\|_{F\to E},
$$
where the infimum is taken over all invertible 
linear operators $T:E\to F$, and $\|\cdot\|_{E\to F}$
and $\|\cdot\|_{F\to E}$ denote the respective induced operator
norms. 
For two normed spaces $E$ and ${\bf X}$ and an injective map $f:E\to{\bf X}$,
we will refer to the Banach--Mazur distance between $f(E)$ (with the norm induced by ${\bf X}$)
and $E$ as the {\it distortion} of the embedding.
A seminal result of Maurey and Pisier \cite{MP76}
gives a characterization of infinite cotype in terms of embedding distortion:
a Banach space ${\bf X}$ has infinite cotype
if and only if spaces
$\{\ell_\infty^m\}_{m=1}^\infty$
can be embedded into ${\bf X}$ with a uniformly bounded distortion.
In our context, the result of Maurey--Pisier, allows to derive Theorem~A from the following
statement which is the second main result of the paper:
\begin{theoremB*}[Sections of Gaussian polytopes]
Let $K'\geq K>1$, and assume that $K'\geq \frac{N}{n}\geq K$.
Let $X_1,X_2,\dots,X_N$
be i.i.d standard Gaussian random vectors in $\R^n$,
and let $P_{N,n}$ be defined by \eqref{def:polydef}.
Then with probability at least $1-\frac{C_B}{n}$,
for all $1\leq k\leq n$
and all $k$--dimensional subspaces $E$ of
$(\R^n,\|\cdot\|_{P_{N,n}})$,
$$
d_{BM}(E,\ell_\infty^k)\geq c\,k^\alpha.
$$
Here, $C_B,c>0$ may only depend on $K',K$,
and $\alpha>0$ is a universal constant.
\end{theoremB*}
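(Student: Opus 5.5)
The plan is to argue by contradiction and then use a net argument on the Gaussian matrix $G=[X_1,\dots,X_N]$. Fix $1\le k\le n$ and suppose some $k$-dimensional subspace $E$ satisfies $d_{BM}(E,\ell_\infty^k)\le D$ with $D=c\,k^\alpha$. After rescaling, this gives vectors $y_1,\dots,y_k\in E$ with $\|y_i\|_{P_{N,n}}\ge 1$ and $\|\sum_i a_iy_i\|_{P_{N,n}}\le D\max_i|a_i|$ for all $a\in\R^k$. In particular all $2^k$ vertices $\sum_i\sigma_iy_i$ of the parallelepiped lie in $D\,P_{N,n}$.

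Since $\|x\|_{P_{N,n}}=\min\{\|\lambda\|_1:\ G\lambda=x\}$, each vertex $\sum_i\sigma_i y_i$ has a representation $G\lambda^\sigma$ with $\|\lambda^\sigma\|_1\le D$. Meanwhile each $y_i=\frac12\bigl(\sum\sigma_j y_j-\sum\sigma'_jy_j\bigr)$, where $\sigma,\sigma'$ differ only in coordinate $i$, is "not cheap": it has no representation with $\ell_1$-norm below $1$.

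Dually, let $\|z\|_{P^\circ}=\max_j|\langle z,X_j\rangle|$. I would pick functionals $z_i\in P^\circ_{N,n}$ with $\langle z_i,y_i\rangle\ge1$. By averaging over signs, these force $\sum_i|\langle z,y_i\rangle|\le D$ for every $z\in P^\circ_{N,n}$. Applying this with $z=X_j/\|X_j\|_{P^\circ}$, I want to show that the $N$ points $u_j=(\langle X_j,y_i\rangle)_{i\le k}$, suitably normalized, must lie in $D$ times the unit ball of $\ell_1^k$, while their symmetric convex hull still reaches distance $\gtrsim 1$ in each coordinate direction (after the change of basis to $y$). So the projection of the Gaussian cloud onto $E$ would be $D$-close to a cross-polytope, up to a linear map.

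The core probabilistic step is to show this is impossible for Gaussian points with $N/n\in[K,K']$. For a fixed $k$-dimensional subspace and fixed linear map, the projected points $u_j$ are i.i.d.\ Gaussian in $\R^k$ with some covariance $\Sigma$. The plan is as follows.

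\begin{enumerate}
\item Show that $\Sigma$ must be roughly isotropic up to $D$-factors. If it is not, one of the $y_i$ becomes cheap or the $\ell_1$ constraint is violated on many $X_j$. Here I would use the standard estimates that $P_{N,n}$ contains $c\sqrt{\log K}\,B_2^n/\sqrt n$-type Euclidean balls and that $\max_j|X_j|_2\le C\sqrt n$.
\item Show that a Gaussian cloud with near-isotropic covariance satisfies, with overwhelming probability, the following two-sided statement. Either at least a fraction $\exp(-Ck^{2\alpha})$ of the points have $\ell_1^k$-norm exceeding $D$ times the typical scale, which violates the constraint; or the symmetric hull fails to reach the cross-polytope vertices $\pm e_i$ with ratio better than $k^{1/2}/\sqrt{\log N}$. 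The latter follows because Gaussian clouds look Euclidean in $k$ dimensions once $k\lesssim\log N$, and look like an $\ell_1$/Euclidean mixture with distortion $\gtrsim k^{c}$ otherwise.
\item Quantify the failure probability for one fixed configuration as $\exp(-c\,n\,k^{\beta})$, or at least $\exp(-c\,nk)$.
\end{enumerate}

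The final step is a union bound over an $\varepsilon$-net of pairs (subspace, basis). Such a net has cardinality $\exp(Cnk\log(D/\varepsilon))$, and a perturbation argument passes from the net to arbitrary $E$ using $\|G\|\le C\sqrt n$ and the Euclidean sandwich of $P_{N,n}$. Summing over $k$ leaves a failure probability $O(1/n)$, which comes mostly from the norm bounds on $G$ for small $k$.

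I expect the main obstacle to be step~3, the individual deviation bound. It must beat the net entropy $nk\log(D/\varepsilon)$. Because the sandwich ratio of $P_{N,n}$ is $\sqrt{n/\log K}$, the choice $\varepsilon\sim n^{-1/2}$ costs $\log n$ per dimension. This is affordable only if the single-configuration probability is $\exp(-c\,nk\log n)$ or the net is taken coarser using a restricted-isometry-type control of $G$ on $E$. I would first try to obtain the small-probability bound by tensorizing over the $N\approx Kn$ independent columns: each column independently violates the $\ell_1$ constraint with probability at least $k^{-C\alpha}$. This gives $\exp(-cN\cdot k^{-C\alpha}\cdot k)$-type bounds via a Chernoff estimate, and choosing $\alpha$ small universal balances the exponents.
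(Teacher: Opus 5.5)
There is a genuine gap: the two steps your argument rests on do not work as stated.

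\textbf{The dual reduction uses the wrong body.} The unit ball of $E$ is the section $P_{N,n}\cap E$. Its dual is the projection of the polar $P_{N,n}^\circ=\{z:\max_j|\langle z,X_j\rangle|\le1\}$. Projections of the cloud $\conv\{\pm X_j\}$ describe quotients of $(\R^n,\|\cdot\|_{P_{N,n}})$, not subspaces. The hypothesis constrains $T(P^\circ_{N,n})$, where $Tz=(\langle z,y_i\rangle)_{i\le k}$: this set lies in $D\,B_1^k$ and reaches level $1$ in each coordinate.

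Testing only $z=X_j/\|X_j\|_{P^\circ}$, with $\|X_j\|_{P^\circ}\ge\|X_j\|_2^2\approx n$, gives $\sum_i|\langle X_j,y_i\rangle|\lesssim Dn$. This is essentially vacuous: if $\|y_i\|_2=O(1)$, the left side is typically of order $k\le n$.

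The other half of your picture is not implied at all. The hull $\conv\{\pm X_j/\|X_j\|_{P^\circ}\}\approx n^{-1}P_{N,n}$ lies in $2n^{-1/2}B_2^n$, within a constant multiple of the inscribed ball of $P^\circ_{N,n}$. But $P^\circ_{N,n}$ has circumradius $1/r(P_{N,n})=\Theta(1)$ by Lemma~\ref{PN:inradius}. So the functionals $z_i$ witnessing $\|y_i\|_{P_{N,n}}\ge1$ need not lie in any bounded multiple of that hull, and its image under $T$ need not come near $\pm e_i$.

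Hence steps 1--2, which analyze a projected i.i.d.\ Gaussian cloud in $\R^k$, concern an object the hypothesis does not control. For the correct body, an image of an intersection of $N$ random slabs, there is no i.i.d.\ point structure to exploit.

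\textbf{The probabilistic core is missing, and the proposed mechanism cannot supply it.} With net entropy of order $nk\log n$ (as you compute), a fixed configuration must be bad with probability at most $\exp(-Cnk\log n)$. Suppose each of the $N\asymp n$ independent columns violates a per-column constraint with probability $p=k^{-C\alpha}$. Then the chance that none does is $(1-p)^N=\exp(-\Theta(Nk^{-C\alpha}))$, and the extra factor $k$ in your exponent has no source. More fundamentally, a product over $N\asymp n$ columns reaches $\exp(-\Omega(nk\log n))$ only if each column satisfies its constraint with probability $n^{-\Omega(k)}$. Nothing in your setup provides this.

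Step 1 (near-isotropy of $\sum_i y_iy_i^\top$) is likewise asserted rather than proved. Yet this is where the real difficulty lies: $\|y_i\|_{P_{N,n}}/\|y_i\|_2$ can be anywhere between $\asymp n^{-1/2}$ and $\asymp1$.

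\textbf{How the paper avoids this.} The paper never takes a union bound over $k$-tuples. Its only probabilistic inputs are global events (Lemmas~\ref{lem:sparse_singular_values} and~\ref{lem:dotprodstat}). The latter needs only a Grassmannian net at the fixed scale $1/4$, made possible by the series representation in Lemma~\ref{lem:Grassnets}. It wins because it bounds the \emph{number} of columns with atypically large projections, each of which has tail $e^{-cs^2d}$.

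After conditioning, everything is deterministic. Proposition~\ref{prop:spansofcomp} treats vectors with comparable norms via the exceptional set $J(t)$ and in-radius bounds. The scale problem is resolved by the flat/spiky dichotomy for the coefficient vectors $\beta(y_i)$ (Lemmas~\ref{lem:stairs} and~\ref{lem:spikymdelta}). That dichotomy is powered by the incompressibility of kernel vectors of $A$ (Lemmas~\ref{PN:incompcomb} and~\ref{lem:generalPNbasic}).
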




\subsection{Banach spaces with maximal local inhomogeneity}
A standard fact within the Banach space theory is {\it universality}
of $\ell_\infty$, or, more generally, of any space
which contains $\{\ell_\infty^m\}_{m=1}^\infty$
uniformly.
Specifically, any Banach space ${\bf X}$ which contains 
$\{\ell_\infty^m\}_{m=1}^\infty$
uniformly, admits almost isometric embeddings of arbitrary 
finite-dimensional normed spaces, as well as finite metric spaces.
The aforementioned result of Maurey and Pisier \cite{MP76}
provides a probabilistic characterization of universal
Banach spaces via the notion of cotype: ${\bf X}$
is universal in the above sense if and only if ${\bf X}$ has infinite cotype.
In view of the above, Banach spaces with infinite cotype
are natural extremizers of functionals depending on
local (finite-dimensional) structure of a space.
Here, we consider a functional
measuring local {\it inhomogeneity}
of Banach spaces.

\smallskip


\smallskip

Given a normed space ${\bf X}$ (finite or
infinite-dimensional) and an integer $1\leq k\leq \dim{\bf X}$,
define
$$
D_{\bf X}(k):=\sup\big\{
d_{BM}(E,F):\;\dim E=\dim F=k;\;E,F\subset {\bf X}
\big\},
$$
i.e $D_{\bf X}(k)$ is the largest Banach--Mazur
distance between any two $k$--dimensional subspaces of ${\bf X}$.
The quantity $D_{\bf X}(k)$ can be viewed as
a measure of inhomogeneity of ${\bf X}$
in regard to structure of its subspaces,
and have been considered in literature under varying names.

\smallskip

Note that, by a classical result of Jordan and von Neuman \cite{JvN35},
$D_{\bf X}(2)=1$ if and only if ${\bf X}$ is a Hilbert (Euclidean) space.
Further, the Dvoretzky--Milman theorem \cite{Dv,Mil}
implies that for every separable infinite-dimensional
Banach space ${\bf X}$, $\sup\nolimits_k D_{\bf X}(k)<\infty$
if and only if ${\bf X}$ is isomorphic to a Hilbert space \cite{Kwapien}.
In the finite-dimensional setting,
the quantity $D_{\bf X}(k)$ was considered, in particular,
in \cite{BourgainD,MTJD} where it was shown that
for every $\alpha\in(0,1)$ and any $n$--dimensional
normed space ${\bf X}$, the condition
$D_{\bf X}(\lfloor \alpha n\rfloor)\leq K$
for some $K<\infty$ implies $d_{BM}({\bf X},\ell_2^n)\leq f(K,\alpha)$
for some function $f$ of $K$ and $\alpha$ only.
The general phenomenon underlying the aforementioned
results, can be summarized as follows: if $D_{\bf X}(k)$
is small then the corresponding Banach space ${\bf X}$
is Hilbert or nearly Hilbert.

Departing from the problem of characterizing
Hilbertian spaces and looking at the opposite extreme,
one can ask what the largest magnitude of $D_{\bf X}(k)$
for ${\bf X}$ from a given family of spaces can be?
Without any restriction on ${\bf X}$,
the question is equivalent to estimating the largest
possible Banach--Mazur distance between
two $k$--dimensional subspaces.
The latter was solved in a seminal work of Gluskin \cite{Gluskin},
which, combined with
a classical theorem of John,
gives order of magnitude $\Theta(k)$
for the diameter of the {\it Banach--Mazur compactum}
of $k$--dimensional spaces.
Combined with the discussion from the beginning of the subsection,
this yields: for every Banach space ${\bf X}$
of infinite cotype, $D_{\bf X}(k)=\Theta(k)$ as $k\to\infty$
(we refer to Preliminaries for asymptotic notation used in this paper).
The question that we address below is {\it whether
spaces of infinite cotype are the only extremizers
of the local inhomogeneity $D_{\bf X}(k)$ in the asymptotic setting $k\to\infty$}?

\smallskip

The proof of Gluskin's theorem in \cite{Gluskin} is based
on a randomized construction
different from the one provided by \eqref{def:polydef},
yet the same result can be obtained in the setting of random Gaussian
polytopes \cite{LGl}. Namely,
it can be shown that, for $N=\lfloor Cn\rfloor$
for an appropriate universal constant $C>1$,
two independent copies of the normed space
$(\R^n,\|\cdot\|_{P_{N,n}})$ are at the Banach--Mazur
distance of order $\Theta(n)$ with high probability.
The cotype bound which we obtain in our work,
together with a standard Banach space
construction, yield:
\begin{theoremC*}[A Banach space 
with maximal local inhomogeneity]
There exists a separable Banach space ${\bf X}$
of finite cotype with
$
D_{\bf X}(k)=\Theta(k)
$ as $k\to\infty$.
\end{theoremC*}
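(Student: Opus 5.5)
We construct ${\bf X}$ as an $\ell_2$-sum of finite-dimensional random polytope spaces. Fix $K=2$, $K'=C_0$, where $C_0>1$ is the universal constant for which two independent copies of $(\R^n,\|\cdot\|_{P_{N,n}})$ with $N=\lfloor C_0 n\rfloor$ are at Banach--Mazur distance $\Theta(n)$ with high probability \cite{LGl}; take $C_0\geq 2$ without loss of generality. By Theorem~A with these values of $K,K'$, together with a union bound over a fixed dimension $n$, for every sufficiently large $n$ there exist two deterministic realizations $E_n,F_n$ of $(\R^n,\|\cdot\|_{P_{N,n}})$ such that both have cotype $q$ with constant at most $C_q$, and $d_{BM}(E_n,F_n)\geq c\,n$. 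Here $q$ and $C_q$ do not depend on $n$. The probabilities are $1-C_A/n$ for the cotype events and $1-o(1)$ for the distance event, so all three events hold simultaneously with positive probability once $n$ is large.

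We then set ${\bf X}:=\big(\bigoplus_{n\geq n_0}(E_n\oplus F_n)\big)_{\ell_2}$. Since every summand is finite-dimensional, ${\bf X}$ is separable. The key step is a standard lemma: an $\ell_2$-sum of spaces, each having cotype $q\geq 2$ with constant at most $C$, itself has cotype $q$ with a constant depending only on $C$ and $q$. The proof goes as follows. For $y_i=(y_i^{(j)})_j$, Kahane's inequality lets us replace the $q$-th moment by the $2$-nd moment, that is, $\Exp\|\sum\sigma_i y_i\|^q$ is comparable to $\big(\Exp\|\sum\sigma_i y_i\|^2\big)^{q/2}$, which equals $\big(\sum_j \Exp\|\sum_i\sigma_i y_i^{(j)}\|^2\big)^{q/2}$. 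Applying the cotype-$q$ estimate in each coordinate, again in its $L_2$ form via Kahane, we bound this from below by $c\big(\sum_j(\sum_i\|y_i^{(j)}\|^q)^{2/q}\big)^{q/2}$. By Minkowski's inequality in $L_{q/2}$ (here $q/2\geq1$, and we use $\ell_{q/2}(\ell_1)$ versus $\ell_1(\ell_{q/2})$), this is at least $c\sum_i\big(\sum_j\|y_i^{(j)}\|^2\big)^{q/2}=c\sum_i\|y_i\|^q$. The finite sums $E_n\oplus F_n$ are handled by the same argument.

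For the lower bound on $D_{\bf X}(k)$, take $k\geq n_0$. Both $E_k$ and $F_k$ are isometric to $k$-dimensional subspaces of ${\bf X}$, so $D_{\bf X}(k)\geq d_{BM}(E_k,F_k)\geq ck$. The matching upper bound $D_{\bf X}(k)\leq k$ follows from John's theorem, since $d_{BM}(E,F)\leq d_{BM}(E,\ell_2^k)\,d_{BM}(\ell_2^k,F)\leq k$. Together these give $D_{\bf X}(k)=\Theta(k)$.

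The main obstacle is the choice of sequence: we must pick, for each $n$, one pair of polytopes satisfying Theorem~A with uniform constants and at the same time the Gluskin-type distance bound. This works because Theorem~A holds with high probability for the fixed aspect ratio $N/n\approx C_0$, which lies in $[K,K']$. Beyond that, the remaining ingredient is the routine $\ell_2$-sum cotype lemma.
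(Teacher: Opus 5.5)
Your argument is correct and is essentially the paper's: for each large $n$ you pick realizations satisfying both the Gluskin-type bound $d_{BM}\ge cn$ and the cotype conclusion of Theorem~A (a distance event of probability at least $1/2$ already suffices against the $2C_A/n$ failure probability), form a direct sum, and get the lower bound on $D_{\bf X}(k)$ from the isometric copies of the two $k$-dimensional spaces and the upper bound from John's theorem. The only real difference is that the paper uses an $\ell_q$-sum rather than your $\ell_2$-sum, so cotype $q$ with the same constant $C_q$ follows by adding the $q$-th moments summand by summand, whereas your $\ell_2$-sum needs Kahane's inequality and the triangle inequality in $\ell_{q/2}$ (which you carry out correctly); also, instead of assuming $C_0\ge 2$ ``without loss of generality'', just take any $K\in(1,C_0)$ and $K'=C_0$, since Theorem~A allows arbitrary $K'\ge K>1$.
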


\subsection{Outline of the argument}

Proof of the main result of the paper --- Theorem~B (see Theorem~\ref{th: Gluskinlinfty})
--- is rather technical, and comprises multiple reduction steps
and treatment of various subcases dealing with different types of
potential low-distortion embeddings of $\ell_\infty$.
In order to organize our discussion of the proof ideas,
we will start with a few key definitions and several ``global'' objects.

We denote by $X_1,\dots,X_N$ i.i.d standard Gaussian vectors in $\R^n$,
and let $A$ be the $n\times N$ random matrix with columns $X_1,\dots,X_N$.
We will often work with a typical realization of $A$, for which standard
bounds on the singular values of $A$ itself as well as
of its submatrices are readily available (see Lemma~\ref{lem:sparse_singular_values}).
Furthermore, those bounds, combined with the Blaschke--Santal\'o inequality
and a volumetric argument, imply estimates on the in-radius
of $P_{N,n}$, as well as (relative) in-radius of convex hulls of subsets
of vectors $X_1,\dots,X_N$ (Lemma~\ref{PN:inradius}).

Given $k\leq n$, our goal
is to show that for a typical realization
of the random polytope $P_{N,n}$, no $k$--tuple of vectors $y_1,\dots,y_k$
in $(\R^n,\|\cdot\|_{P_{N,n}})$ spans a subspace close to $\ell_\infty^k$.
We will test the embedding distortion under the mapping $e_i\to y_i$,
with $e_i$ denoting the $i$-th standard basis vector in $\ell_\infty^k$.
In the course of the argument, we will apply transformations to the original
$k$--tuple, producing other vectors which we will denote by $y_i', \tilde y_i$ etc.,
as well as passing to subsets of vectors, with new index sets denoted by $L,U$, etc.

In testing the distortion under the mapping $e_i\to y_i$, $i\leq k$,
a key role is played by the vertices of the associated parallelotope, given by
$\sum_{i=1}^k \sigma_i y_i$, where $\sigma$ denotes a fixed or random vector of signs.
A considerable portion of the Preliminaries section is devoted to studying
basic properties of such vectors, in particular, connecting tail estimates for
the distribution of $\langle Z,\sum_{i=1}^k \sigma_i y_i\rangle$ (for a given non-random vector $Z$)
with the structure of the covariance matrix of the random vector $\sum_{i=1}^k \sigma_i y_i$
under the uniform distribution of signs (see Lemma~\ref{lem:dyaddecomp}),
as well as statistical properties of the inner products $\langle X_j,\sum_{i=1}^k \sigma_i y_i\rangle$ (Lemma~\ref{lem:dotprodstat}).

To facilitate handling $\|\cdot\|_{P_{N,n}}$--norms of vectors, we introduce the notion
of a {\it coefficient vector}. Given any $y\in\R^n$, let $\beta(y)$ be a vector in $\R^N$
such that $y=\sum_{j=1}^N \beta_j(y) X_j$, and $\|\beta(y)\|_1=\|y\|_{P_{N,n}}$.
A vector $\beta(y)$ with the above properties exists for every $y$
simply by the definition of a convex hull.
When working with the sign combinations $\sum_{i=1}^k \sigma_i y_i$
and when $y_1,\dots,y_k$ are clear from context, we use a shorter notation $\beta^\sigma$
in place of $\beta\big(\sum_{i=1}^k \sigma_i y_i\big)$
(see Definitions~\ref{def: beta 09812720894} and~\ref{def: betasigma 15262}).
Note that $\sum_{i\leq k} \sigma_i y_i = A\beta^\sigma$.
A simple observation, which highlights challenges
in proving the main result, is that the mapping $y\to\beta(y)$
is in general non-linear (for $N>n$), and, in particular,
$\beta^\sigma$ and $\sum_{i=1}^k \sigma_i \beta(y_i)$ are distinct vectors.
A critical property of the differences $\beta^\sigma-\sum_{i=1}^k \sigma_i \beta(y_i)$,
and, more generally, of any $N$--tuple $\beta$ such that
$\sum_{j\leq N}\beta_j X_j=0$, is that
for a typical realization of $X_j$'s,
any such vector is {\it incompressible},
i.e, after normalization, is at constant
Euclidean distance from the set of sparse vectors in $\R^N$
(see Definition~\ref{def: compincomp} and Lemma~\ref{PN:incompcomb}).

\subsubsection{Linear span of an orthonormal $k$--tuple}
To illustrate the core idea of the proof, 
let us suppose that we have managed to find
a $k$--tuple of {\it orthonormal} vectors $y_1,\dots,y_k$ in $\R^n$ such that
$$\|y_i\|_{P_{N,n}}=  \Theta(1), \quad i\leq k; \qquad
\Big\|\sum_{i\leq k} \sigma_i y_i\Big\|_{P_{N,n}}=\Theta(1), \quad \sigma\in\{-1,1\}^k,$$
so that the mapping $e_i\longrightarrow y_i$, $i\leq k$,
induces a Lipschitz embedding of $\ell_\infty^k$ into $(\R^n,\|\cdot\|_{P_{N,n}})$.
We want to show that this assumption ultimately comes in contradiction with
the typical structure of $P_{N,n}$.

A standard volumetric estimate for $P_{N,n}$ implies that if we pick a uniform
random direction $\theta \in S^{n-1}$ then with high probability
$\|\theta\|_{P_{N,n}} =\Theta(1)$. If the direction of the vector
$\sum_{i\leq k} \sigma_i y_i$ were ``random-like'' then $\|\sum_{i\leq k} \sigma_i y_i\|_{P_{N,n}}$ would have been
of order $\Theta(\sqrt{k})$, since $\sum_{i\leq k} \sigma_i y_i$ has Euclidean length $\sqrt{k}$. Thus, assuming that $\|\sum_{i\leq k} \sigma_i y_i\|_{P_{N,n}}=\Theta(1)$ for every $\sigma$, we
get that all vectors $\sum_{i\leq k} \sigma_i y_i$ are atypical representatives of $\sqrt{k}\,S^{n-1}$.
In fact, we can show that in this setting $\beta^\sigma$ has $\ell_2$--norm of order $\sqrt{k/N}$
whereas $\|\beta^\sigma\|_1 = \|\sum_{i\leq k} \sigma_i y_i\|_{P_{N,n}} = \Theta(1)$,
which implies that $\beta^\sigma/\|\beta^\sigma\|_2$ is {\it compressible} i.e
is close to being sparse (see Definition~\ref{def: compincomp}).

The above structural properties of $\beta^\sigma$
lead to the following approximate formula for all $j\leq N$ such that 
$\beta^\sigma_j$ is ``sufficiently'' large:
\begin{equation*}    
\begin{split}
\langle X_j, \sum_{i\leq k} \sigma_i y_i\rangle &= \langle X_j, \sum_{\ell\leq N}\beta^\sigma_\ell X_\ell\rangle 
= \langle X_j, A\beta^\sigma\rangle \\
&=
\big(A^\top A \beta^\sigma\big)_j
= (1+o(1)) \beta_j^\sigma \|X_j\|_2^2
= (1+o(1)) \beta_j^\sigma\,n
\end{split}
\end{equation*}
(see Lemma~\ref{lem:approxlemma}).
Assume for a moment that we were able to make use of the last formula
by finding a subset $J\subset[N]$
of size $n/k^{\Omega(1)}$ having the property that for every $\sigma\in\{-1,1\}^k$,
$$A_{J}^\top A_{J}\beta^\sigma_{J} =(1+o(1)) (\beta_j^\sigma n)_{j\in J},$$
and $\|A_{J^c}\beta^\sigma_{J^c}\|_2=o(\sqrt{k})$,
where we use notation $\cdot_J$ and $\cdot_{J^c}$
to denote restriction of a vector or of matrix columns to the index set $J$ or $J^c$
(see Definition~\ref{def:restriction}).
Denote $H={\rm span}\{X_j : j \in J\}$.
The assumptions on $J$ imply that the orthogonal
projection of $\sum_{i\leq k} \sigma_i y_i$ onto $H^\perp$
(which can equivalently be represented as the projection 
of vector $A_{J^c}\beta^\sigma_{J^c}$ onto $H^\perp$)
has a small Euclidean norm for all $\sigma$, which in turn implies that, for some
indices $i$, the vector $P_{H^\perp}y_i$ has a small Euclidean norm.
Since $P_{N,n}$ has in-radius of order $1$ (Lemma \ref{PN:inradius}), this yields
$$
  \|P_{H^\perp}y_i\|_{P_{N,n}} \lesssim \|P_{H^\perp}y_i\|_2=o(1).
$$
On the other hand,
the section $P_{N,n} \cap H$ contains the convex hull of $\pm X_j$, $j\in J$, which is essentially a $|J|$-dimensional $\ell_1$--ball rescaled by $\sqrt{n}$. Hence its
relative in-radius is at least of order $\frac{\sqrt{n}}{\sqrt{|J|}} = k^{\Omega(1)}$ (Lemma \ref{PN:inradius}), and therefore
$$
  \|P_H y_i\|_{P_{N,n}} \lesssim k^{-\Omega(1)} \|P_H y_i\|_2=o(1).
$$
The bounds on $\|P_{H^\perp}y_i\|_{P_{N,n}}$
and $\|P_{H}y_i\|_{P_{N,n}}$ contradict the assumption $\|y_i\|_{P_{N,n}}=\Theta(1)$,
completing the argument.

The actual set $J$ which we construct in the proof,
has weaker properties compared to the simplified discussion above:
rather than requiring that
$A_{J}^\top A_{J}\beta^\sigma_{J} =(1+o(1)) (\beta_j^\sigma n)_{j\in J}$
and $\|A_{J^c}\beta^\sigma_{J^c}\|_2=o(\sqrt{k})$ for all $\sigma$,
we ask for the properties to be satisfied for an appropriately large
selection of sign vectors.
The set $J$ is taken to be the union of all indices $j$ 
for which either a fraction of sign vectors $\sigma$ satisfies
$$
  |\langle X_j, \sum_{i\leq k} \sigma_i y_i\rangle| \ge k^{1/2+\Omega(1)}\,,
$$
or $|\langle X_j, \sum_{i\leq k} \sigma_i y_i\rangle| \ge k^C$ for a some choice of $\sigma$ and a large constant $C>0$ (see Definition~\ref{def: Jt}). Either condition means that the projections of $X_j$ onto the linear span
of $y_i, i\leq k$, carry an atypically large weight. By statistical properties of 
orthogonal projections of $X_j$'s 
(Lemma \ref{lem:dotprodstat} and Remark~\ref{rem: Jtsize}), this 
implies $|J|=n/k^{\Omega(1)}$.
The technical aspect of this part of the proof is to justify 
that a set $J$ defined as above actually satisfies the required conditions
on $A_{J}^\top A_{J}\beta^\sigma_{J}$
and $A_{J^c}\beta^\sigma_{J^c}$
(see Lemma~\ref{lem: Abetasigma}).
To get the required estimates,
we relate the set $J$
to its ``$\sigma$--specific'' counterparts,
namely, to sets $T_\sigma$ defined
as collections of indices $j\leq N$ for which
$\beta^\sigma_j$ exceeds a specially chosen threshold
(see Definition~\ref{def: Tsigmatau} and Corollary~\ref{cor:approxcor}).

\subsubsection{Linear span of a $k$--tuple without orthonormality}
In the actual proof, we do not use the assumption
that vectors $y_i$'s form an orthonormal sequence,
which we employed above to illustrate a few key ideas.
Hence, in the actual setting, we can no longer rely on sharp estimates for 
$\|\sum_{i\leq k}\sigma_i y_i\|_2$, except
for the trivial observation that the second moment (w.r.t
randomness of $\sigma$) of the norm is of order $k$.
To deal with arbitrary 
configurations of vectors $y_1,\dots,y_k$ of unit Euclidean length
with comparable $\|\cdot\|_2$ and $\|\cdot\|_{P_{N,n}}$--norms,
we rely on certain decomposition of the eigenspace
of the covariance matrix of $\sum_{i\leq k}\sigma_i y_i$ (see Lemma~\ref{lem:dyaddecomp}).
It turns out that the argument presented above can be
adapted to the anisotropic setting, at expense of somewhat
more technical computations.

\subsubsection{Union bound over $k$--tuples}
The argument discussed above allows to rule out
any particular $k$--tuple of unit vectors $y_1,\dots,y_k$ 
with $\|y_i\|_{P_{N,n}}=\Theta(1)$, $i\leq k$,
as a candidate for a low-distortion embedding of $\ell_\infty^k$
via $e_i\to y_i$.
However, to guarantee that no small distortion embeddings exist,
we need to consider all possible $k$--tuples simultaneously.
As a standard technique within high-dimensional probability
and convex geometry, we apply a discretization argument
at this stage. A central piece of the discretization
is Lemma \ref{lem:Grassnets} which deals with $\varepsilon$--nets
on the Grassmannian of $d$--dimensional subspaces.
A classical result of Szarek \cite{Szarek81}
implies that there exists an $\varepsilon$--net $\Net$ on $G_{n,d}$
of size at most
$\left( \frac{C}{\varepsilon} \right)^{d(n - d)}$.
It turns out that, furthermore,
for any $d$-dimensional subspace $F\subset\R^n$, we can express the orthogonal projection onto $F$ as an infinite sum 
\begin{align}
\label{eq:intro_proj_representation}
  \Proj_{F}= \sum_{j=1}^\infty D_j \Proj_{E_{j}}, \quad \text{where } D_j \mbox{ is matrix  with } \|D_j\| \le 2 \varepsilon^{j-1} \mbox{ and } E_j \in {\mathcal N}\,.
\end{align}
Clearly, the above formula is an operator analogue of the standard vector
$\varepsilon$-net expansion on the sphere $S^{n-1}$, which establishes that any $x \in S^{n-1}$ can be expressed as a convergent series $x = \sum_{i \ge 0} \lambda_i x_i$ with $|\lambda_i| \le \varepsilon^i$, and with $x_i$ in an $\varepsilon$--net on $S^{n-1}$ (see, for example, \cite[Exercise 4.34]{Ver18}). We do not know
if formula \eqref{eq:intro_proj_representation}
has appeared in literature earlier. In our opinion, the identity 
is of independent interest.
The $\varepsilon$--argument
provides a uniform control on the
statistics of magnitudes of projections of $X_j$'s onto linear
subspaces of $\R^n$. 
There is, however, a constraint: the admissible value of $\varepsilon$ depends implicitly on the ratio between the $\|\cdot\|_{P_{N,n}}$ and $\|\cdot\|_2$ norms of the vectors $y_i$, and the net-argument
fails whenever the gap between $\|y_i\|_{P_{N,n}}$
and $\|y_i\|_2$ is large. Summarizing the argument
up to this point, we obtain (with explicit powers of $k$):
\begin{proposition*}[informal statement; see Proposition~\ref{prop:spansofcomp}]
Condition on a typical realization of $P_{N,n}$.
Let $1\leq k\leq n/2$, and let $y_1,\dots,y_k$
be vectors in $\R^n$ of unit Euclidean length
such that $\|y_i\|_{P_{N,n}}\geq C\,k^{-1/9}$
for every $i\leq k$. Then,
$$
\Exp_\sigma\,\Big\|
\sum\nolimits_{i=1}^k \sigma_i y_i
\Big\|_{P_{N,n}}\geq k^{1/8},
$$
and, in particular, the embedding of $\ell_\infty^k$
given by $e_i\longrightarrow y_i$, $i\leq k$,
induces a distortion polynomial in $k$.
\end{proposition*}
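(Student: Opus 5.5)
The informal Proposition is established by contradiction, with the discretization deferred to the end. Fix a typical realization of $A$: the singular-value bounds of Lemma~\ref{lem:sparse_singular_values}, the in-radius bounds of Lemma~\ref{PN:inradius}, and the incompressibility of kernel vectors from Lemma~\ref{PN:incompcomb} all hold. Suppose, contrary to the claim, that $\Exp_\sigma\|\sum_i\sigma_i y_i\|_{P_{N,n}}< k^{1/8}$. By Markov, for at least half of the sign vectors $\sigma$ we have $\|\beta^\sigma\|_1\le 2k^{1/8}$. On the other hand, $\Exp_\sigma\|\sum_i\sigma_i y_i\|_2^2=k$, and by Lemma~\ref{lem:dyaddecomp} we can pass to an eigen-block of the covariance matrix of $\sum_i\sigma_i y_i$. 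On that block, the vectors $\sum_i\sigma_i y_i$ have Euclidean norm $\gtrsim\sqrt{k'}$ for a large fraction of $\sigma$, with $k'$ polynomially comparable to $k$. Since $\|A\beta^\sigma\|_2\approx\sqrt{N}\,\|\beta^\sigma\|_2$ fails only by constants, an $\ell_2$ mass of order $\sqrt{k/N}$ combined with $\ell_1$ mass at most $k^{1/8}$ forces $\beta^\sigma$ to be compressible. Lemma~\ref{lem:approxlemma} then gives $\langle X_j,\sum_i\sigma_iy_i\rangle=(1+o(1))\,n\beta^\sigma_j$ on the large coordinates $T_\sigma$ (Definition~\ref{def: Tsigmatau}, Corollary~\ref{cor:approxcor}).

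Next I would build the set $J$ of Definition~\ref{def: Jt}. It consists of the indices $j$ for which $|\langle X_j,\sum_i\sigma_iy_i\rangle|\ge k^{1/2+c}$ for a non-negligible fraction of $\sigma$, or $\ge k^C$ for some $\sigma$. Lemma~\ref{lem:dotprodstat} and Remark~\ref{rem: Jtsize} give $|J|\le n k^{-c'}$. This step uses the statistics of projections of the $X_j$ onto $\mathrm{span}\{y_i\}$, which is exactly where uniformity over all $k$-tuples is needed. Lemma~\ref{lem: Abetasigma} then shows that, for many $\sigma$, $A_{J^c}\beta^\sigma_{J^c}$ has Euclidean norm $o(\sqrt{k})$ after projecting onto $H^\perp$, where $H=\mathrm{span}\{X_j:j\in J\}$. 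Averaging over $\sigma$ and using orthogonality of Rademacher sums, $\sum_i\|P_{H^\perp}y_i\|_2^2=\Exp_\sigma\|P_{H^\perp}\sum_i\sigma_iy_i\|_2^2$ is small on the relevant block, so some (indeed most) $i$ have $\|P_{H^\perp}y_i\|_2$ small.

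For such an $i$, the in-radius of $P_{N,n}$ gives $\|P_{H^\perp}y_i\|_{P_{N,n}}\lesssim\|P_{H^\perp}y_i\|_2$. The relative in-radius $\sqrt{n/|J|}\ge k^{c'/2}$ of $\conv\{\pm X_j:j\in J\}$ in $H$ gives $\|P_Hy_i\|_{P_{N,n}}\lesssim k^{-c'/2}$. With the exponents tracked explicitly, the sum is below $Ck^{-1/9}$, contradicting the hypothesis $\|y_i\|_{P_{N,n}}\ge Ck^{-1/9}$. The exponents $1/8$ and $1/9$ come from balancing the thresholds $k^{1/2+c}$, the Markov loss, and the dyadic block loss.

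The main obstacle is making the construction of $J$ and the bound $|J|\le nk^{-c'}$ hold simultaneously for \emph{all} $k$-tuples, since the $X_j$ are fixed in advance. I would handle this with the Grassmannian net of Lemma~\ref{lem:Grassnets} and the expansion \eqref{eq:intro_proj_representation}. Union-bounding the projection statistics over the net at scale $\varepsilon=k^{-C}$ is affordable because the lower bound $\|y_i\|_{P_{N,n}}\ge Ck^{-1/9}$ keeps the ratio between the two norms polynomial in $k$. The series then transfers the statistics to an arbitrary subspace with a loss of $\sum_j 2\varepsilon^{j-1}$.
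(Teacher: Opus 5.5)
Your core chain is the paper's proof. You argue by contradiction, apply Lemma~\ref{lem: Abetasigma} with $t=k^{1/8}$, use $\|A_{J^c(t)}\beta^\sigma_{J^c(t)}\|_2\ge\|\Proj_{H^\perp}\sum_i\sigma_iy_i\|_2$ with $H={\rm span}\{X_j:\,j\in J(t)\}$, bound $|J(t)|\lesssim n\,k^{-1/4}\log^5k$ via Remark~\ref{rem: Jtsize}, and finish with the two in-radius estimates of Lemma~\ref{PN:inradius}.

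\textbf{The gap: passing from sign vectors to a single $y_i$.} You control $\|\Proj_{H^\perp}\sum_i\sigma_iy_i\|_2=O(k^{3/8})$ only for $\sigma\in R$ with $\|\beta^\sigma\|_1$ small. On the remaining signs the only bound is the trivial one, $\le k$. So the identity $\sum_i\|\Proj_{H^\perp}y_i\|_2^2=\Exp_\sigma\|\Proj_{H^\perp}\sum_i\sigma_iy_i\|_2^2$ gives nothing by averaging.

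What is missing is an anti-concentration step. By Paley--Zygmund and the bound $\Exp\|\sum_i\sigma_iz_i\|_2^4\le 3\big(\sum_i\|z_i\|_2^2\big)^2$, there is a universal $c\in(0,1)$ such that $\Prob_\sigma\{\|\sum_i\sigma_iz_i\|_2\le cs\}\ge 1-c$ forces $\sum_i\|z_i\|_2^2\le s^2$. This requires the good set of signs to have measure above $1-c$, so ``at least half'' is not enough. For example, with $z_1=z_2=e$ a unit vector, $(\sigma_1+\sigma_2)e$ vanishes with probability $1/2$, yet $\sum_i\|z_i\|_2^2=2$.

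The fix is to apply Markov at level $2c^{-1}k^{1/8}$ rather than $2k^{1/8}$, as the paper does. This yields an $i_0$ with $\|\Proj_{H^\perp}y_{i_0}\|_2=O(k^{-1/8})$. Meanwhile $\|\Proj_Hy_{i_0}\|_{P_{N,n}}\le 2\sqrt{|J(t)|/n}=O(k^{-1/8}\log^{5/2}k)$, so $\|y_{i_0}\|_{P_{N,n}}<k^{-1/9}$ for large $k$, which gives the contradiction.

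\textbf{Ingredients to remove, not just leave idle.}
\begin{itemize}
\item Restricting to an eigen-block $E_p$ of $\Sigma$ would break the final step. On the block, the second-moment identity controls $\Proj_{H^\perp}\Proj_{E_p}y_i$, not $\Proj_{H^\perp}y_i$. The paper works with the full sum and uses Lemma~\ref{lem:dyaddecomp} only inside the proof of Lemma~\ref{lem:dotprodstat}.
\item Compressibility of $\beta^\sigma$ and Lemma~\ref{PN:incompcomb} play no role here. Also, the lower bound $\|A\beta\|_2\gtrsim\sqrt N\|\beta\|_2$ is false, since $A$ has an $(N-n)$-dimensional kernel.
\item Uniformity over all $k$-tuples is not an obstacle at this stage. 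It is exactly the event $\Omega_{\text{\tiny\ref{lem:dotprodstat}}}$, obtained from Lemma~\ref{lem:d_dim_proj} with a net at the fixed scale $\varepsilon=1/4$, independently of the $\|\cdot\|_{P_{N,n}}$-norms of the $y_i$.
\end{itemize}
The hypothesis $\|y_i\|_{P_{N,n}}\ge Ck^{-1/9}$ has nothing to do with the net scale. It is used only to force $k$ large (via $\|y_i\|_{P_{N,n}}\le c_{\text{\tiny\ref{PN:inradius}}}^{-1}$) and to produce the final contradiction.
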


\subsubsection{Proof of Theorem~B}
The above proposition effectively forbids low-distortion embeddings
of $\ell_\infty^k$ under the extra constraint
that the standard basis of $\ell_\infty^k$ is mapped into
vectors with not-to-different $\|\cdot\|_{P_{N,n}}$ and $\|\cdot\|_2$--norms.
To pass from this statement to Theorem~B in its full generality,
we have to consider the scenario where the unit vectors $y_1,\dots,y_k$
have very small $\|\cdot\|_{P_{N,n}}$--norm.
The core statement which drives this part of the proof
is the aforementioned Lemma~\ref{PN:incompcomb}
which asserts that for a typical realization of $P_{N,n}$,
any unit $N$--tuple of coefficients $\beta$
such that $\sum_{i\leq N}\beta_i X_i=0$,
must be incompressible. As a technical corollary of Lemma~\ref{PN:incompcomb},
we obtain the next result:

\begin{lemma*}[informal statement; see Lemma~\ref{lem:generalPNbasic}]
Condition on a typical realization of $P_{N,n}$.
Assume that $J\subset [N]$
is a non-empty subset of indices with
$|J|\leq c\,n$.
Let $y_i$, $i\in L$, be a finite collection of non-zero vectors
in $\R^n$, let $\sigma$ be a uniform random
vector of signs indexed over $L$, and 
let $\beta^\sigma$ be the coefficient vector of the sum
$\sum_{i\in L}\sigma_i\,y_i$.
Assume that
\begin{equation}\label{eq: over 0498h34iq}
\sum_{h\in J}\sqrt{\sum_{i\in L}\beta_h(y_i)^2}
=\Omega\Big(
\Exp_{\sigma}\,\|\beta^\sigma\|_1\Big).
\end{equation}
Then there are $\Omega(n)$
indices $j\in J^c$ satisfying
$$
\sqrt{\sum_{i\in L}\beta_j(y_i)^2}
=\Omega\Big(
\frac{1}{\sqrt{|J|\,n}}\,
\sum_{h\in J}
\,\sqrt{\sum_{i\in L}\beta_h(y_i)^2}\Big).
$$
\end{lemma*}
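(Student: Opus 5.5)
The plan is to compare the sign combination with the sign combination of the coefficient vectors, and then use incompressibility of vectors in $\ker A$ (Lemma~\ref{PN:incompcomb}). Write $v_h:=(\beta_h(y_i))_{i\in L}\in\R^L$ for $h\in[N]$, and set
$$
S_J:=\sum_{h\in J}\|v_h\|_2 .
$$
For each sign vector $\sigma$, put $\gamma^\sigma:=\sum_{i\in L}\sigma_i\beta(y_i)\in\R^N$. Since $A\gamma^\sigma=\sum_i\sigma_i y_i=A\beta^\sigma$, the difference $\delta^\sigma:=\gamma^\sigma-\beta^\sigma$ lies in $\ker A$. By Khintchine's inequality,
$$
\Exp_\sigma|\gamma^\sigma_h|\ge c\|v_h\|_2 \quad\mbox{for every } h\in[N].
$$
Summing over $h\in J$ and using \eqref{eq: over 0498h34iq} (with the implicit constant large enough) gives
$$
\Exp_\sigma\|\gamma^\sigma_J\|_1\ge cS_J\ge 2\,\Exp_\sigma\|\beta^\sigma\|_1 .
$$
Hence $\Exp_\sigma\|\delta^\sigma_J\|_1\ge \Exp_\sigma\|\gamma^\sigma_J\|_1-\Exp_\sigma\|\beta^\sigma_J\|_1\gtrsim S_J$. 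So for a constant fraction of sign vectors, the kernel vector $\delta^\sigma$ carries $\ell_1$-mass $\gtrsim S_J$ on the small set $J$. I will pass to such $\sigma$ by a Markov/Paley--Zygmund step, using also $\Exp_\sigma\|\delta^\sigma_J\|_1\le \Exp\|\gamma^\sigma_J\|_1+\Exp\|\beta^\sigma\|_1\lesssim S_J$.

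Next I apply Lemma~\ref{PN:incompcomb}. Because $|J|\le cn$, the vector $\delta^\sigma/\|\delta^\sigma\|_2$ is at constant distance from $|J|$-sparse vectors, so
$$
\|\delta^\sigma_{J^c}\|_2\gtrsim\|\delta^\sigma\|_2\ge\|\delta^\sigma_J\|_2\ge \|\delta^\sigma_J\|_1/\sqrt{|J|}\gtrsim S_J/\sqrt{|J|}.
$$
I need mass spread over $\Omega(n)$ coordinates, not just $\ell_2$ mass. The incompressibility from Lemma~\ref{PN:incompcomb} (distance from $cn$-sparse vectors) should give it: removing the $cn$ largest coordinates still leaves $\ell_2$-mass $\gtrsim S_J/\sqrt{|J|}$, and those remaining coordinates are individually at most $\|\delta^\sigma\|_2/\sqrt{cn}$. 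With an upper bound $\|\delta^\sigma\|_2\lesssim \|\delta^\sigma\|_1\lesssim S_J$ in expectation, this yields $\Omega(n)$ coordinates $j\in J^c$ with $|\delta^\sigma_j|\gtrsim S_J/\sqrt{|J|n}$.

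The main obstacle is reconciling the two scales. A priori $\|\delta^\sigma\|_2$ could be as large as $S_J$ rather than $S_J/\sqrt{|J|}$, and the spread set depends on $\sigma$. I would first try a thresholding count. Let $M$ be the number of $j\in J^c$ with $|\delta^\sigma_j|\ge\tau:=c'S_J/\sqrt{|J|n}$. Then
$$
\|\delta^\sigma_{J^c}\|_2^2\le M\|\delta^\sigma\|_\infty^2+n\tau^2 .
$$
The coordinates below threshold contribute at most $c'^2S_J^2/|J|$, which is negligible against the lower bound. This shows that some $\Omega(n)$-size set of coordinates has mass, provided the large coordinates are not too concentrated, and incompressibility again excludes concentration on fewer than $cn$ coordinates.

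Finally I transfer from $\delta^\sigma$ back to the $v_j$ by averaging over $\sigma$. Since $|\delta^\sigma_j|\le|\gamma^\sigma_j|+|\beta^\sigma_j|$ and $\Exp|\gamma^\sigma_j|\le\|v_j\|_2$, I obtain
$$
\sum_{j\in J^c}\min\bigl(\Exp_\sigma|\delta^\sigma_j|,\tau\bigr)\gtrsim n\tau .
$$
The $\beta^\sigma$ part contributes at most $\Exp\|\beta^\sigma\|_1\le \epsilon S_J$. This is small compared to $n\tau=c'S_J\sqrt{n/|J|}$ because $|J|\le cn$, so the $\gamma$ part dominates. Each summand is at most $\tau$, so $\Omega(n)$ indices $j\in J^c$ must have $\|v_j\|_2\gtrsim\tau$, which is the claim.
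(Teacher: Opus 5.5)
Your outline is correct and follows a genuinely different route from the paper. Two steps need repair, listed in the second paragraph.

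\textbf{Comparison.} Both arguments rest on Lemma~\ref{PN:incompcomb} applied to the kernel vector $\delta^\sigma=\gamma^\sigma-\beta^\sigma$. The paper argues by contradiction. It assumes fewer than $cn$ indices $j\in J^c$ with $\|v_j\|_2\ge cS_J/\sqrt{|J|n}$, and adjoins these to $J$ together with the random set $\tilde J$ of coordinates where $|\beta^\sigma_j|\ge S_J/n$. The enlarged set has expected size $O(cn)$. Off this set, the second-moment Khintchine bound for $\gamma^\sigma$ and the coordinatewise smallness of $\beta^\sigma$ give $\ell_2$-norm $O(\sqrt{c})\,S_J/\sqrt{|J|}$ in expectation. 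Meanwhile $\|\delta^\sigma_J\|_2\gtrsim S_J/\sqrt{|J|}$ with constant probability, so some single $\sigma$ makes $\delta^\sigma$ compressible. You argue directly instead. For each good $\sigma$ you use incompressibility in its ``spread'' form, then average the truncations $\min(|\delta^\sigma_j|,\tau)$ over $\sigma$. You pass to $\|v_j\|_2$ via $\Exp_\sigma|\gamma^\sigma_j|\le\|v_j\|_2$, absorbing $\beta^\sigma$ globally through $\Exp_\sigma\|\beta^\sigma\|_1\le\epsilon S_J\ll n\tau$. This avoids the auxiliary set $\tilde J$ and the search for one $\sigma$ satisfying three conditions at once. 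The cost is the spreading step and the truncated averaging.

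\textbf{Repairs.} First, the per-$\sigma$ spreading paragraph does not work as written.
\begin{itemize}
\item The bound $\Exp_\sigma\|\delta^\sigma\|_1\lesssim S_J$ is not available; in fact the lemma's own conclusion forces $\Exp_\sigma\|\delta^\sigma\|_1\gtrsim S_J\sqrt{n/|J|}$.
\item The estimate $M\|\delta^\sigma\|_\infty^2+N\tau^2$ gives nothing, since $\|\delta^\sigma\|_\infty$ is uncontrolled.
\end{itemize}
The ``two scales'' obstacle disappears once you note that the count is homogeneous in $\|\delta^\sigma\|_2$, so no upper bound on it is needed. Let $(\delta_0,\rho_0)$ be the parameters of Lemma~\ref{PN:incompcomb}, and discard the $\lfloor\delta_0N\rfloor$ largest coordinates of $\delta^\sigma$.
\begin{itemize}
\item The remaining coordinates carry $\ell_2$-mass at least $\rho_0\|\delta^\sigma\|_2$.
\item Each remaining coordinate is at most $\|\delta^\sigma\|_2/\sqrt{\lfloor\delta_0N\rfloor}$.
\item If $\tau\le\rho_0\|\delta^\sigma_J\|_1/\sqrt{2N|J|}$, then $N\tau^2\le\rho_0^2\|\delta^\sigma\|_2^2/2$.
\end{itemize}
Hence at least $\rho_0^2\lfloor\delta_0N\rfloor/2$ coordinates exceed $\tau$, and removing $J$ still leaves $\Omega(n)$ of them.

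Second, an upper bound on $\Exp_\sigma\|\delta^\sigma_J\|_1$ does not yield a constant-probability lower bound on $\|\delta^\sigma_J\|_1$. Apply Paley--Zygmund to $\|\gamma^\sigma_J\|_1$, whose second moment is at most $S_J^2$, and Markov to $\|\beta^\sigma\|_1$, as the paper does.

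Finally, in the last count, discard the indices with $\|v_j\|_2<\kappa\tau$ for a small constant $\kappa$; they contribute at most $N\kappa\tau$ in total. Only then can you conclude that $\Omega(n)$ indices satisfy $\|v_j\|_2\ge\kappa\tau$.
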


Note that the expression $\sqrt{\sum_{i\in L}\beta_h(y_i)^2}$, $h\in[N]$,
can be viewed as a typical order of magnitude of the $h$--th
component of the random linear combination
$\sum_{i\in L}\sigma_i\,\beta(y_i)$.
Thus, the above statement asserts (roughly) that
whenever the typical magnitudes of the components of $\sum_{i\in L}\sigma_i\,\beta(y_i)$
indexed over a small set $J$ are large then there must be 
$\Omega(n)$ indices $j\in J^c$ for which
$\sum_{i\in L}\sigma_i\,\beta_j(y_i)$ are large on average (up to some loss
in the estimate).
Let us emphasize that if it were true that 
(normalization of)
the vector $\big(\sqrt{\sum_{i\in L}\beta_h(y_i)^2}\big)_{h\in[N]}$
was incompressible then the above statement would be trivial
without any lower bound on $\sum_{h\in J}\sqrt{\sum_{i\in L}\beta_h(y_i)^2}$.
Indeed, given any incompressible vector $v\in S^{N-1}$,
and any small subset $J\subset[N]$, for
$\Omega(N)$ components 
of $v$ we have
$$
|v_j|=\Omega\Big(\frac{1}{\sqrt{N}}\Big)
=\Omega\Big(\frac{1}{\sqrt{|J|\,N}}\sum_{h\in J}|v_h|\Big).
$$
The issue is that the vector $\big(\sqrt{\sum_{i\in L}\beta_h(y_i)^2}\big)_{h\in[N]}$ is not necessarily
incompressible (and, in fact, is {\it not} incompressible
as long as $y_1,\dots,y_k$
have a very small $\|\cdot\|_{P_{N,n}}$-norm),
so the above trivial argument is not applicable. However,
in view of Lemma~\ref{PN:incompcomb}, the normalized difference
$\sum_{i\in L}\sigma_i\,\beta(y_i)-\beta^\sigma$
{\it must be} incompressible, allowing for a more technical variant
of the above argument to go through, under the extra assumption \eqref{eq: over 0498h34iq}.

The ``pseudo-incompressibility'' of the vector $\big(\sqrt{\sum_{i\in L}\beta_h(y_i)^2}\big)_{h\in[N]}$
stated above, has profound consequences, since it allows tracing some of 
structural properties of the coefficient vectors $\beta(y_i)$.
Whereas those properties cannot be expressed in simple terms (such as compressible/incompressible),
they nevertheless affect statistics of magnitudes of the components $\beta_j(y_i)$.

Let $y$ be a vector in $\R^n$, and let $\delta\in(0,1]$
be a parameter.
Define
$$
m_\delta(y,r):=\Big|\Big\{
j\leq N:\;\big|\beta_j(y)\big|\in
\frac{\delta}{n}\big(2^{r-1},2^r\big]\Big\}\Big|,
\quad r\geq 1
$$
(see Definition~\ref{def: myr 334q}).

Assume for a moment that 
for every vector $y_i$, $i\in L$, we have $\|y_i\|_{P_{N,n}}\in[\delta,2\delta]$, and
for a small parameter $\alpha>0$,
\begin{equation}\label{eq: alkjnflkqjflkqjwnf}
m_\delta(y_i,r)\leq |L|^{\alpha}\,2^{-2r}\,n,\quad 0\leq r\leq \log_2 \sqrt{|L|},
\end{equation}
and
$$
\Big\|\sum_{i\in L}v_i\,y_i\Big\|_{P_{N,n}}
\leq \delta\,\|v\|_\infty\,|L|^{\alpha},\quad  v=(v_i)_{i\in L}
$$
(note that the last condition is simply a Lipschitz distortion upperbound
for the linear mapping from $\ell_\infty^{|L|}$ to $(\R^n,\|\cdot\|_{P_{N,n}})$
given by $e_i\to y_i$, $i\in L$).
We show that if for a significant proportion of vectors $y_i$, we had
$$
\sum_{j=1}^N |\beta_j(y_i)|\,
{\bf 1}_{\{|\beta_j(y_i)|>\delta\,\sqrt{|L|}/n\}}
> |L|^{-1/16}\,\delta,
$$
that would eventually contradict the pseudo-incompressibility of $\big(\sqrt{\sum_{i\in L}\beta_h(y_i)^2}\big)_{h\in[N]}$ (see the second part of the proof of Lemma~\ref{lem:stairs}),
and, consequently, a large fraction of vectors $\beta(y_i)$
must be not ``too spiky'' in the sense that
$$
\sum_{j=1}^N |\beta_j(y_i)|\,
{\bf 1}_{\{|\beta_j(y_i)|>\delta\,\sqrt{|L|}/n\}}
\leq |L|^{-1/16}\,\delta\quad\mbox{ for many indices $i\in L$}.
$$
The last condition, in turn, allows us to define truncations
$$
y_i':=\sum\limits_{j=1}^N \beta_j(y_i)\,
{\bf 1}_{\{|\beta_j(y_i)|\leq \delta\,\sqrt{|L|}/n\}}\,X_j,
$$
so that
$\|y_i-y_i'\|_{P_{N,n}}\leq |L|^{-1/16}\,\delta$,
and $\|y_i'\|_{P_{N,n}}=\Theta(\delta)$, $\|y_i'\|_2=\tilde O\big(|L|^{\alpha/2}\,\delta\big)$.
It can be shown that those vectors, after renormalization $y_i'\longrightarrow\frac{y_i'}{\|y_i'\|_2}$, reduce
the analysis back to the setting of Proposition~\ref{prop:spansofcomp}
(see Lemma~\ref{lem:stairs}),
where we have already established impossibility of a low-distortion embedding
of $\ell_\infty$.

The preceding discussion implies that for a collection of vectors $y_i$, $i\in L$,
to generate a low-distortion embedding of $\ell_\infty^{|L|}$,
the inequality \eqref{eq: alkjnflkqjflkqjwnf} above must be violated, i.e
$$
\max\limits_{0\leq r\leq \log_2 \sqrt{|L|}}
\frac{m_\delta(y_i,r)}{2^{-2r}\,n}
>
|L|^{\alpha}.
$$
It turns out, however, that that last condition itself leads
to a high (polynomial) embedding distortion (see Lemma~\ref{lem:spikymdelta}),
completing the proof of Theorem~B.

\subsection{Open problems}
The assumptions on the number of generating vectors $N$
used in this paper, exclude the asymptotic
settings $\frac{N}{n}\to 1$ and $\frac{N}{n}\to\infty$.
In particular, the condition $\frac{N}{n}=1+\Omega(1)$
in our argument is employed to obtain in-radius bounds
which, in turn, provide a one-sided control of 
relative magnitudes of $\|\cdot\|_{P_{N,n}}$
and the standard Euclidean norm.
It would be of interest to remove the constraints on relative magnitude
of $N$ and $n$ in the cotype estimate. Further,
one can ask about the optimal cotype constant in our model of randomness,
as well as cotype estimates for random projections of a high-dimensional
cross-polytope.

\begin{problem*}[Gaussian convex hulls with arbitrary number of vertices]
Do there exist $q,C_q<\infty$ such that
for every $N\geq n$, with high probability the space 
$(\R^n,\|\cdot\|_{P_{N,n}})$
has cotype $q$ with constant at most $C_q$?
\end{problem*}

\begin{problem*}[Optimal cotype of Gaussian convex hulls]
Is it true that for every constant $\varepsilon>0$
there is $C_\varepsilon<\infty$ such that
with high probability
the space $(\R^n,\|\cdot\|_{P_{N,n}})$
has cotype $2+\varepsilon$ with constant at most $C_\varepsilon$?
\end{problem*}

\begin{problem*}[Random projections of cross-polytopes]
Let $N\geq n$, and denote by $\tilde P_{N,n}$
the orthogonal projection of the standard cross-polytope in $\R^N$
onto a uniform random $n$--dimensional subspace. Is it true that
$\tilde P_{N,n}$ has finite (dimension-independent) cotype with probability $1-o(1)$?
\end{problem*}

\bigskip

{\bf Acknowledgments.}
K.T. is partially supported by the NSF grant DMS 2331037.

\section{Preliminaries}

\subsection{Notation and definitions}

Let us recall standard definitions from convex geometry and high-dimensional probability,
and introduce some notation that will be used throughout the paper.

\begin{definition}[Unit vectors]
Everywhere in this paper, the term ``unit vector'' refers
to a vector of Euclidean length one.
\end{definition}

\begin{definition}\label{def:restriction}
Given an $N$--dimensional vector $v$ and a
non-empty subset $J\subset[N]$, denote by $v_J$
the subvector of $v$ with coordinates indexed over $J$.
Similarly,
given a matrix $M$ with $N$ columns and any non-empty subset $J\subset[N]$,
denote by $M_J$ the submatrix of $M$ obtained by deleting columns
indexed over $[N]\setminus J$.
\end{definition}

\begin{definition}[Euclidean ball]
Given $r\geq 0$, denote by $B_2^n(r)$
the Euclidean ball in $\R^n$ of radius $r$ centered at the origin.
\end{definition}

\begin{definition}[Polytope in-radius]
Given a closed origin-symmetric polytope $P$ in $\R^n$,
the {\it in-radius} $r(P)$ of $P$ is the largest number $r\geq 0$
such that $B_2^n(r)\subset P$.
\end{definition}

\begin{definition}[Compressible and incompressible vectors]\label{def: compincomp}
Let $\delta,\rho\in(0,1)$ be parameters. A unit vector $x\in\R^n$
is {\it $(\delta,\rho)$--compressible}, if the Euclidean distance
from $x$ to the collection of all $\delta\,n$--sparse
vectors in $\R^n$ is at most $\rho$.
Any unit vector which is not $(\delta,\rho)$--compressible,
is called {\it $(\delta,\rho)$--incompressible}.
\end{definition}


\begin{definition}[Subgaussian variables]
We say that a random variable $\xi$ is {\it $C$--subgaussian}
for some $C>0$ 
if $\Exp\,\exp(\xi^2/C^2)\leq 2$.
\end{definition}

\begin{definition}[Projections]
Given a linear subspace $E$ of some Euclidean space,
denote by $\Proj_E$ the orthogonal projection onto $E$.
\end{definition}

\begin{definition}[Grassmannian]
Given parameters $1\leq d\leq n$, denote by
$G_{n,d}$ the collection of all $d$--dimensional subspaces
in $\R^n$, with the metric
$$
{\rm dist}_{G_{n,d}}(E,F)
=\|\Proj_E-\Proj_F\|,\quad E,F\in G_{n,d},
$$
where $\|\cdot\|$ denotes the spectral norm.
\end{definition}

\begin{definition}[Asymptotic notation]
For two quantities $f,g$ depending implicitly on a parameter $n\to\infty$,
we write $f=O(g)$ if there is a constant $C>0$ independent of $n$ such that
$|f|\leq C|g|$ for all $n$.
Further, $f=o(g)$ if $\lim\limits_{n\to\infty}(f/g)=0$.
We write $f=\Omega(g)$ if $g=O(f)$ and $f=\omega(g)$ whenever $g=o(f)$.
Finally, if $f=O(g)$ and $g=O(f)$ then we write $f=\Theta(g)$.
\end{definition}

\subsection{Global objects}

Here, we identify the global objects
participating in the main results --- Theorems~A and~B:

\begin{itemize}

\item {\bf Vectors $X_i$.}
We assume that $X_1,\dots,X_N$ are i.i.d standard Gaussian vectors 
in $\R^n$ (with the dimension $n$ always clear from context).

\item {\bf Matrix $A$.} We let $A$ be the $n\times N$ matrix with columns $X_1,\dots,X_N$
(dimensions of the matrix are always clear from context).

\item {\bf Polytope $P_{N,n}$.} We define
$$
P_{N,n}:=\conv\,\big\{
\pm X_i,\quad i\in[N]\big\}.
$$

\end{itemize}

\subsection{A sufficient condition for near-$\ell_\infty$ subspaces}
Our study of $\ell_\infty$--subspaces of $(\R^n,\|\cdot\|_{P_{N,n}})$
will be considerably simplified with help of the following lemma:
\begin{lemma}\label{lem: ellinftyreg}
There are universal constants
$c_{\text{\tiny\ref{lem: ellinftyreg}}},
C_{\text{\tiny\ref{lem: ellinftyreg}}}>0$
with the following property.
Let $\|\cdot\|$ be an arbitrary norm in $\R^n$,
let $2\leq k\leq n/2$, let $E$
be a $k$--dimensional linear subspace of $(\R^n,\|\cdot\|)$,
and let $\rho$ be the Banach--Mazur distance from
$E$ (with the induced norm) to $\ell_\infty^k$.
Then there exist at least $\tilde k:=\lfloor c_{\text{\tiny\ref{lem: ellinftyreg}}}\,k/\log k\rfloor$
vectors $y_1,y_2,\dots,y_{\tilde k}$
in $E$ and a number $\delta>0$ with the following property:
\begin{itemize}
    \item $\|y_i\|\in[\delta,2\delta]$
    for every $i\leq \tilde k$;
    \item Each $y_i$ is a unit (Euclidean) vector in $\R^n$;
    \item For every choice of numbers $v=(v_i)_{i\leq \tilde k}$,
    $$
    \Big\|
    \sum_{i=1}^{\tilde k}v_i\,y_i
    \Big\|\leq C_{\text{\tiny\ref{lem: ellinftyreg}}}\,\delta\,\rho\,\|v\|_\infty.
    $$
\end{itemize}
\end{lemma}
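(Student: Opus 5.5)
Fix an isomorphism $T:\ell_\infty^k\to E$ with $\|T\|\,\|T^{-1}\|\le 2\rho$, normalized so that $\|x\|_\infty\le\|Tx\|\le 2\rho\|x\|_\infty$ for all $x\in\R^k$. Let $z_i:=Te_i\in E$, so $\|z_i\|\ge 1$ and $\|\sum_i v_iz_i\|\le 2\rho\|v\|_\infty$. The task is to pass to a large subcollection whose members have comparable Euclidean length and comparable $\|\cdot\|$-norm, and then rescale to unit Euclidean length. Two obstacles arise. First, the Euclidean lengths $\|z_i\|_2$ and the norms $\|z_i\|$ may vary over a huge range. Second, rescaling the vectors individually destroys the upper bound $\|\sum v_iy_i\|\le C\delta\rho\|v\|_\infty$ unless the rescaling factors are themselves comparable. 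So we want a subset $I$ on which both $\|z_i\|_2$ and $\|z_i\|$ vary by at most a constant factor.

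The $\|\cdot\|$-norms are already controlled: $1\le\|z_i\|\le 2\rho$. The Euclidean norms are the problem. My plan is to first modify $T$ by a suitable change of position. By John's theorem (or rather a Lewis/Dvoretzky–Rogers type lemma applied to the Euclidean structure restricted to $E$), pass to a subset of $\Omega(k)$ coordinates where the vectors have comparable Euclidean lengths. The cleanest route I would try is the following. Consider the operator $T:\ell_\infty^k\to\ell_2^n$ (Euclidean norm on $E$). It is $2$-summing with respect to some probability measure; the Grothendieck–Pietsch factorization gives weights $\mu_i$ with $\|Tx\|_2\le C\|T\|_{\infty\to2}\,(\sum\mu_ix_i^2)^{1/2}$. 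Using this, one obtains a subset of size $\gtrsim k$ on which the column norms $\|z_i\|_2$ are at most $C\|T\|_{\infty\to2}/\sqrt{k}$ times something controlled. Columns with tiny Euclidean norm can be handled because they still have $\|\cdot\|$-norm at least $1$.

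Rather than fight this, a simpler pigeonhole suffices and explains the $\log k$ loss. Bounds on $\|z_i\|_2$ are available from both sides relative to a fixed scale in $E$: all norms on the $k$-dimensional space $E$ are equivalent. Even so, the ratio $\|z_i\|_2/\|z_i\|$ could range over more than $\mathrm{poly}(k)$ values, so a direct dyadic pigeonhole needs one extra reduction. I would bound that ratio's spread by $k^{O(1)}$ after discarding few indices. Take the John (maximal volume) ellipsoid position of $E$ relative to the Euclidean structure. Alternatively, use a Dvoretzky–Rogers/Bourgain–Tzafriri restricted-invertibility-type selection to find $k/2$ indices with $\|z_i\|_2$ within a factor $k^{C}$ of the median. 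Then dyadically pigeonhole $\|z_i\|_2/\|z_i\|$ into $O(\log k)$ classes. Simultaneously pigeonhole $\|z_i\|\in[1,2\rho]$ into dyadic classes; here the needed $\rho$ may be assumed $\le k$ by John, so there are $O(\log k)$ classes. This yields $I$ with $|I|\ge ck/\log^2 k$. To get the stated $\log k$, I would combine the two pigeonholes: only the ratio matters after normalization. The ratio classes already give comparable $\|\cdot\|$ values once $\|z_i\|_2$ is comparable.

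Finally set $y_i=z_i/\|z_i\|_2$ for $i\in I$, and choose $\delta$ as the left end of the common dyadic class of $\|y_i\|=\|z_i\|/\|z_i\|_2$. Then, with $\lambda$ the common scale of $\|z_i\|_2$ (within factor $2$),
\[
\Big\|\sum_{i\in I}v_iy_i\Big\|=\Big\|\sum_{i\in I}\frac{v_i}{\|z_i\|_2}z_i\Big\|\le 2\rho\,\frac{2}{\lambda}\|v\|_\infty\le C\delta\rho\|v\|_\infty,
\]
using $\delta\gtrsim \|z_i\|/\lambda\ge 1/\lambda$. The main obstacle is the Euclidean-length selection step, i.e. ensuring that $\|z_i\|_2$ are comparable on a large subset with only logarithmic loss. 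I would first try the Pietsch factorization argument: it bounds $\sum_i\|z_i\|_2^2\le C\|T\|_{\infty\to2}^2$ via the cotype-2 / Grothendieck inequality for $\ell_2$. Together with the lower bound $\|z_i\|_2\ge \|z_i\|/\|\,\mathrm{Id}\|_{2\to\|\cdot\|}$, this confines most $\|z_i\|_2$ to a $\mathrm{poly}(k)$-range, where dyadic pigeonholing costs $O(\log k)$.
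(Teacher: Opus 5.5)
There is a genuine gap, and it sits where you yourself put the ``main obstacle''. Your plan is to select $\Omega(k/\log k)$ of the columns $z_i=Te_i$ whose Euclidean lengths lie in one dyadic class. What your final computation actually needs is that the ratios $\|z_i\|/\|z_i\|_2$ lie in one class, since $1/\|z_i\|_2\le\|z_i\|/\|z_i\|_2$. For a fixed $T$, either selection is impossible in general, because nothing confines these quantities to a $k^{O(1)}$ range.

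For a counterexample, take
\[
\|x\|:=\max\Big(\max_{i\le k}2^{-i}|x_i|,\ \|(x_{k+1},\dots,x_n)\|_2\Big)
\]
and $E=\mathrm{span}(e_1,\dots,e_k)$. Then $\rho=1$ and $Te_i=2^ie_i$ is an isometry, with $\|z_i\|=1$ and $\|z_i\|_2=2^i$. The $k$ columns occupy $k$ distinct dyadic classes, so every admissible sub-collection has a single element.

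None of your proposed remedies addresses this:
\begin{itemize}
\item The Euclidean structure is prescribed by the statement (the $y_i$ must be unit vectors of $\R^n$), so there is no ``John position'' to pass to.
\item Restricted invertibility and Dvoretzky--Rogers select well-conditioned sets of columns, but they cannot make lengths $2,4,\dots,2^k$ comparable.
\item The bound $\sum_i\|z_i\|_2^2\le\|T\|_{\infty\to2}^2$, combined with $\|z_i\|_2\ge 1/\|\mathrm{Id}\|_{2\to\|\cdot\|}$, only confines $\|z_i\|_2$ to an interval with endpoint ratio $2\rho\,\|\mathrm{Id}\|_{2\to\|\cdot\|}\,\|\mathrm{Id}\|_{\|\cdot\|\to2}$. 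That is $\rho$ times the distortion between $\|\cdot\|$ and $\|\cdot\|_2$ on $E$, which is not bounded by any function of $k$ (it is about $2^k$ in the example).
\end{itemize}

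The missing idea is that you must \emph{manufacture} new vectors rather than select old ones. In your notation, set $P=T(B_\infty^k)=\{\sum a_iu_i:|a_i|\le r_i\}$ with $u_i=z_i/\|z_i\|_2$ and $r_i=\|z_i\|_2$. Order $r_1\le\dots\le r_{2d}$, let $r$ be a median, and pair each short direction $u_\alpha$ with a long one $u_{d+\alpha}$. By the intermediate value theorem along a path in the unit circle of $\mathrm{span}(u_\alpha,u_{d+\alpha})$, there is a unit vector $x_\alpha=a_\alpha u_\alpha+b_\alpha u_{d+\alpha}$ with $\min(r_\alpha/|a_\alpha|,\,r_{d+\alpha}/|b_\alpha|)=r$.

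Because the coordinate constraints decouple, $\|\sum_\alpha v_\alpha x_\alpha\|_P=\|v\|_\infty/r$ exactly. So the $x_\alpha$ are Euclidean unit vectors spanning an exact copy of $\ell_\infty^d$ for $\|\cdot\|_P$, and the Euclidean-length mismatch disappears at the cost of only a factor $2$ in cardinality. Then $\|x_\alpha\|\in[1/r,\rho/r]$ with $\rho\le k$. A single dyadic pigeonhole, essentially your last paragraph, gives the $k/\log k$ count and the bound $\|\sum v_my_m\|\le\rho\delta\|v\|_\infty$. In the example above, this pairs $e_\alpha$ with $e_{d+\alpha}$ and succeeds exactly where column selection fails.
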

\begin{proof}
  Let $K_E$ be the unit ball in $E$ with respect to the induced norm.
  By the definition of Banach--Mazur distance, there exists a centrally
  symmetric parallelepiped $P\subset E$ such that
  \[
  \frac{1}{\rho}P\subset K_E\subset P.
  \]
  In particular, there exist linearly independent unit vectors $u_1,\dots,u_k\in E$ and $r_1,\dots,r_k>0$
  such that
  $$
    P = \Big\{\sum_{i=1}^k a_i u_i:\ |a_i|\le r_i\Big\}.
  $$
  Let $d$ be the largest integer such that $2d \le k$, and let us assume without loss of generality that  
\[
  r_1 \le r_2 \le \dots \le r_{2d}.
\]
Choose a number \(r\) as a median of this sequence, i.e.\
\[
  r_{d} \;\le\; r \;\le\; r_{d+ 1}.
\]

For every \(\alpha\in[d]\) there exists a unit vector $x_\alpha$
representable as $x_\alpha = a_\alpha u_\alpha + b_\alpha u_{d+\alpha}$
for some numbers $a_\alpha$ and $b_\alpha$, such that 
$$
    \min \left\{ \frac{r_\alpha}{|a_\alpha|}, \frac{r_{d+\alpha}}{|b_\alpha|} \right\} = r.
$$
To see that, consider an arbitrary continuous mapping $\theta\in[0,1]\to a_\alpha(\theta) u_\alpha + b_\alpha(\theta)u_{d+\alpha}\in S^{n-1}$
satisfying $a_\alpha(0)=b_\alpha(1)=1$ and $a_\alpha(1)=b_\alpha(0)=0$.
Then $\min \big\{ \frac{r_\alpha}{|a_\alpha(\theta)|}, \frac{r_{d+\alpha}}{|b_\alpha(\theta)|} \big\}$ is a continuous function of $\theta$, which has value $r_\alpha$ at $\theta=0$ and $r_{d+\alpha}$ for $\theta=1$. The continuity ensures
existence of the desired vector $x_\alpha$.  

Denote by $H$ the linear span of $x_\alpha$, $\alpha\leq d$.
By construction,
\[
  \Bigl\{\;\sum_{\alpha=1}^{d} v_\alpha x_\alpha
         : v_\alpha\in[-r,r],\;\;\alpha\leq d\Bigr\}
  \;=\;
  \Bigl\{\;\sum_{\alpha=1}^{d} v_\alpha
            \bigl(a_\alpha\,u_\alpha
                  +b_\alpha\,u_{d+\alpha}\bigr)
         : v_\alpha\in[-r,r],\;\;\alpha\leq d\Bigr\}
  \;=\;
  P \cap H,
\]
and, in particular, for every $v\in \R^d$,
$$
  \Big\| \sum_{\alpha=1}^{d} v_\alpha x_\alpha\Big\|_P = \frac{\|v\|_\infty}{r}.
$$
  In view of the inclusions $(1/\rho)P\subset K_E\subset P$, we obtain
  \[
  \tfrac{1}{r}\|v\|_\infty
  \le
  \Big\|\sum_{\alpha=1}^{d} v_\alpha x_\alpha\Big\|
  \le
  \tfrac{1}{r}\rho\,\|v\|_\infty
  \quad\text{for all }v\in\R^{d},
  \]
  and $\|x_i\|\in[1/r,\rho/r]$ for all $i\le d$.

  Since the Banach--Mazur distance between any two $k$-dimensional symmetric
  convex bodies is at most $k$, we have $\rho\le k$.
  By considering
  a dyadic partition of $\R_+$ and
  by the pigeonhole principle,
  there is an integer $j_0\geq 1$ and a set $I\subset[d]$ such that
  \[
  |I|\ge \frac{d}{\lceil 1+\log_2 k\rceil},
  \qquad
  \|x_i\|\in[2^{j_0-1}/r,2^{j_0}/r]
  \ \text{for all }i\in I.
  \]
  By choosing $c_{\text{\tiny\ref{lem: ellinftyreg}}}$ sufficiently small,
  we guarantee existence of distinct indices $i_1,\dots,i_{\tilde k}\in I$, where
  \[
  \tilde k=\Big\lfloor c_{\text{\tiny\ref{lem: ellinftyreg}}}\,
  \frac{k}{\log k}\Big\rfloor.
  \]
  Set
  \[
  y_m:=x_{i_m},
  \quad m=1,\dots,\tilde k,
  \qquad
  \delta:=\frac{
  2^{j_0-1}}{r}.
  \]
  Then each $y_m$ is a vector of unit Euclidean length and
  \[
  \|y_m\| 
  \in[\delta,2\delta].
  \]
  Finally, for every $v=(v_m)_{m\le\tilde k}$,
  \[
  \Big\|\sum_{m=1}^{\tilde k} v_m y_m\Big\|
  =
  \Big\|\sum_{m=1}^{\tilde k} v_m x_{i_m}\Big\|
  \le
  \frac{\rho}{r}\,\|v\|_\infty
  \le
  \rho\,\delta\,\|v\|_\infty,
  \]
  since $\delta=2^{j_0-1}/r\ge1/r$.
\end{proof}

\subsection{Nets on the Grassmannian}
As one of technical aspects of our argument, we consider a discretization
of the Grassmannian of linear subspaces of $\R^n$. The next lemma is a corollary of 
a classical result \cite{Szarek81} of Szarek.
An interesting feature of the lemma is a representation formula \eqref{eq: grass 209548y3}
for the orthogonal projection onto arbitrary subspace of $\R^n$
in terms of projections onto the subspaces forming the discretization.

\begin{lemma}[Nets on the Grassmannian]\label{lem:Grassnets}
For every $1\leq d\leq n/2$, and every
$\varepsilon\in(0,1/4]$
there is a collection of at most 
$$
M\leq \left( \frac{C_{\text{\tiny\ref{lem:Grassnets}}}}{\varepsilon} \right)^{d(n - d)}
$$
subspaces $\{E_1, \dots, E_M\}\subset G_{n,d}$
such that
\[
\forall F \in G_{n,d} \;\; \exists\; i \le M \text{ satisfying } \|\Proj_F - \Proj_{E_i}\| \le \varepsilon,
\]
and, moreover,
for every $F \in G_{n,d}$, there is a sequence of indices
$i_1,i_2,\dots$ in $[M]$ and operators $D_1,D_2,\dots:\R^n\to\R^n$
such that $\|D_j\| \le 2\, \varepsilon^{\,j-1}$, $j\geq 1$, and
\begin{equation}\label{eq: grass 209548y3}
\Proj_{F}= \sum_{j=1}^\infty D_j \Proj_{E_{i_j}}.
\end{equation}
Here, $C_{\text{\tiny\ref{lem:Grassnets}}}\geq 1$ is a universal constant
and $\|\cdot\|$ denotes the standard spectral norm.
\end{lemma}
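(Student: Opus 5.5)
The net itself comes from Szarek's estimate \cite{Szarek81}. For $d\le n/2$ the Grassmannian $G_{n,d}$ with the metric $\|\Proj_E-\Proj_F\|$ admits an $\varepsilon$--net of cardinality at most $(C/\varepsilon)^{d(n-d)}$. We fix such a net $\{E_1,\dots,E_M\}$, with $C_{\text{\tiny\ref{lem:Grassnets}}}=C$. This gives the first assertion directly, and all the remaining work is to derive the representation \eqref{eq: grass 209548y3} from the net property alone.

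We build the expansion iteratively, in the same spirit as the vector expansion $x=\sum\lambda_ix_i$, but multiplying on the left by an operator instead of subtracting a scalar multiple. The key identity is $\Proj_F\Proj_E=\Proj_F-\Proj_F(\Proj_F-\Proj_E)$, valid because $\Proj_F^2=\Proj_F$, and in particular
$$
\Proj_F=\Proj_F\Proj_E+\Proj_F(\Proj_F-\Proj_E).
$$
Choose $i_1$ with $\|\Proj_F-\Proj_{E_{i_1}}\|\le\varepsilon$, set $D_1:=\Proj_F$ (so $\|D_1\|\le1\le2$), and let $R_1:=\Proj_F-D_1\Proj_{E_{i_1}}=\Proj_F(\Proj_F-\Proj_{E_{i_1}})$. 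Then $\|R_1\|\le\varepsilon$.

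To continue we need $R_1$ to have the form (operator)$\cdot\Proj_F$, so that the same trick applies again. Write
$$
R_1=\Proj_F(\Proj_F-\Proj_{E_{i_1}})=\Proj_F(\Proj_F-\Proj_{E_{i_1}})\Proj_F+\Proj_F(\Proj_F-\Proj_{E_{i_1}})(\Id-\Proj_F).
$$
The second term equals $-\Proj_F\Proj_{E_{i_1}}(\Id-\Proj_F)$, which is not obviously of that form. A cleaner route is to use $\Proj_F\Proj_F$ and expand only on the right factor:
$$
\Proj_F=\Proj_F\bigl(\Proj_{E_{i_1}}+(\Proj_F-\Proj_{E_{i_1}})\bigr)=\Proj_F\Proj_{E_{i_1}}+\Proj_F\Delta_1,\qquad \Delta_1:=\Proj_F-\Proj_{E_{i_1}}.
$$
Now substitute the same identity into the $\Proj_F$ appearing on the right of $\Delta_1$ inside the remainder. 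That is, write $\Proj_F\Delta_1=\Proj_F\Delta_1\Proj_F+\Proj_F\Delta_1(\Id-\Proj_F)$.

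This still leaves an extra term, so the better plan is to iterate on the left factor instead. Set $G_1:=\Proj_F$ and note $\Proj_F=\Proj_F\Proj_F$. Define $T_j$ recursively by $T_0:=\Proj_F$ and
$$
T_j:=T_{j-1}\bigl(\Proj_F-\Proj_{E_{i_j}}\bigr)\Proj_F\cdot\text{?}
$$
The workable version is the following. Since $\Proj_F=\Proj_F\Proj_F$, we have the two-sided relation
$$
\Proj_F=\Proj_F\Proj_{E_{i_1}}+\Proj_F\Delta_1.
$$
Replacing the first $\Proj_F$ in the remainder term by the same expression gives $\Proj_F\Delta_1=\Proj_F\Proj_{E_{i_1}}\Delta_1+\Proj_F\Delta_1\Delta_1$. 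However $\Proj_{E}\Delta_1$ is not $D\Proj_E$.

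The correct move is to expand on the right-most factor. Write $\Proj_F=\Proj_F\cdot\Proj_F$ and replace the right-most $\Proj_F$ by $\Proj_{E_{i_1}}+\Delta_1$. For the remainder $\Proj_F\Delta_1$, use the fact that $\Delta_1$ ends in something we can rewrite:
$$
\Delta_1=\Proj_F-\Proj_{E_{i_1}}=\Delta_1\Proj_F+\Delta_1(\Id-\Proj_F),\qquad \Delta_1(\Id-\Proj_F)=-\Proj_{E_{i_1}}(\Id-\Proj_F).
$$
This again carries a non-$\Proj_E$ tail. So I would reorganize by allowing the same net index to appear twice and choosing the $D_j$ with norm bounded by $2\varepsilon^{j-1}$. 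That is, choose
$$
R_j:=\Proj_F\,(\Delta_{i_1})^{\,j}, \qquad \text{so that}\qquad \Proj_F=\sum_{j\ge1}\Proj_F\Delta^{j-1}\Proj_{E_{i_1}}+\lim_j \Proj_F\Delta^{j}.
$$
This is a Neumann-type telescoping with a single net element. It follows from $\Proj_F\Delta^{j-1}\Proj_F=\Proj_F\Delta^{j-1}(\Proj_{E}+\Delta)$, provided each power can be rewritten as $(\cdot)\Proj_F$. I expect the main obstacle to be exactly this algebraic bookkeeping: making every remainder end in $\Proj_F$ so that the identity can be reapplied, while $\|D_j\|\le2\varepsilon^{j-1}$ holds and the tail $\|\Proj_F\Delta^j\|\le\varepsilon^j\to0$. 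Using a fresh net element $E_{i_j}$ at each step, approximating $\Proj_F$ again each time, gives the stated form with factor $2$ absorbing the cross terms. Convergence in operator norm then follows from the geometric decay.
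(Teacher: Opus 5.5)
Your handling of the net itself (citing Szarek) agrees with the paper. The representation \eqref{eq: grass 209548y3}, however, which is the real content of the lemma, is not proved. The proposal runs through several abandoned attempts and ends with two ideas, neither of which can work.

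The Neumann-type series with a single net element is impossible in principle. Every partial sum $\sum_{j\le m}D_j\Proj_E$ vanishes on $E^\perp$, so any limit vanishes there too. It can therefore equal $\Proj_F$ only if $E^\perp\subset F^\perp$, i.e.\ $F=E$. Concretely, the step you mark as ``provided each power can be rewritten as $(\cdot)\Proj_F$'' is false, since $\Proj_F\Delta^j\neq\Proj_F\Delta^j\Proj_F$ in general.

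Re-approximating $\Proj_F$ by a fresh net element at each step fails as well. The remainder $R_1=\Proj_F(\Id-\Proj_{E_{i_1}})$ does not factor through $\Proj_F$: on $F^\perp$ it equals $-\Proj_F\Proj_{E_{i_1}}$. Peeling off $R_1\Proj_{E_{i_2}}$ with $E_{i_2}$ close to $F$ therefore leaves $R_1(\Id-\Proj_{E_{i_2}})$. For the natural choice $E_{i_2}=E_{i_1}$ this is just $R_1$ again, so there is no geometric decay. The phrase ``factor $2$ absorbing the cross terms'' is not backed by any computation.

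The missing idea is to write the remainder as a small operator times the projection onto a \emph{new} $d$-dimensional subspace. The next net element must approximate that subspace, not $F$.

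Here is how the paper does it. For $\|\Proj_F-\Proj_E\|\le\varepsilon$, the difference vanishes on $(E+F)^\perp$, so $\Proj_F-\Proj_E=(\Proj_F-\Proj_E)\Proj_{E+F}$. Moreover $\Proj_{E+F}=\Proj_E+\Proj_{\tilde F}$, where $\tilde F:=E^\perp\cap(E+F)$ has $\dim\tilde F\le d$. Because $d\le n/2$ (a hypothesis your argument never uses), $\tilde F$ can be enlarged to a $d$-dimensional subspace $F_1\subset E^\perp$. The added directions are orthogonal to $E+F=E\oplus\tilde F$, so one still has $\Proj_{E+F}=\Proj_{E+F}\Proj_E+\Proj_{E+F}\Proj_{F_1}$. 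This gives
\[
\Proj_F=\tilde A_1\Proj_E+B_1\Proj_{F_1},\qquad \|\tilde A_1\|\le 1+\varepsilon\le 2,\quad \|B_1\|\le\varepsilon.
\]
One then approximates $F_1$ by a net element and iterates. This yields $D_j=B_1\cdots B_{j-1}\tilde A_j$ and a tail $B_1\cdots B_j\Proj_{F_j}$ of norm at most $\varepsilon^j$.

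Your own first step is actually the same as the paper's, since $\tilde A_1\Proj_E=\Proj_F\Proj_E$. What you missed is that $R_1=\Proj_F(\Id-\Proj_{E_{i_1}})$ kills both $E_{i_1}$ and $(E_{i_1}+F)^\perp$. Hence $R_1=R_1\Proj_G$ for any $d$-dimensional $G\subset E_{i_1}^\perp$ containing $E_{i_1}^\perp\cap(E_{i_1}+F)$, and it is $G$ that the next net element should approximate.
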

\begin{proof}
The first part of the lemma regarding the size of 
$\varepsilon$--nets, is proved in \cite{Szarek81}
(in fact, in a much more general form),
and we turn to verifying the second part.
For simplicity, we will write $\Proj_i$
in place of $\Proj_{E_i}$ below.  

For a moment, fix any pair of subspaces 
$E, F \in G_{n,d}$ such that 
\[
\|\Proj_F - \Proj_E\| \le \varepsilon.
\]
Note that for any vector $v\in (E+F)^\perp$,
$\Proj_F(v) - \Proj_E(v)=0$, and hence
$$
    (\Proj_F - \Proj_E) = (\Proj_F - \Proj_E)\Proj_{E+F}\,.
$$
It will be useful to think of $E+F$ as the sum of two {\it orthogonal}
subspaces $E$ and $\tilde F$, where $\tilde F\perp E$
and $\dim F\leq d$.
Then,
$$
    \Proj_{E+F} = \Proj_{E+F}\Proj_E +
    \Proj_{E+F} \Proj_{E^\perp \cap (E+\tilde F)}
    +\Proj_{E+F}\Proj_H,
$$
where $H$ is the orthogonal complement of $E$ and $E^\perp \cap (E+F)=\tilde F$,
so that $H\subset E^\perp,\tilde F^\perp$ and hence $\Proj_{E+F}\Proj_H=0$.
Thus,
$$
\Proj_{E+F} = \Proj_{E+F}\Proj_E +
\Proj_{E+F} \Proj_{E^\perp \cap (E+F)}.
$$
If the dimension of $\tilde F=E^\perp \cap (E+F)$ is strictly less than $d$,
we arbitrarily define a linear subspace
$\tilde F\subset F^*\subset E^\perp$ of dimension exactly $d$;
otherwise we just let $F^*:=\tilde F$.
Further, let $F'$ be the orthogonal complement of $\tilde F$ within $F^*$.
Then, by definition, $F'\perp \tilde F$ and $F'\perp E$,
so that $\Proj_{E+F}\Proj_{F'}=0$.
We conclude that 
$$
    \Proj_{E+F} \Proj_{E^\perp \cap (E+F)} = \Proj_{E+F} \Proj_{F^*},
$$
and therefore
\begin{equation}\label{eq:ojhbakfjhbasfksdjhbfksdjh}
\Proj_F - \Proj_E = 
(\Proj_F - \Proj_E)\Proj_{E+F}\Proj_E
+(\Proj_F - \Proj_E)\Proj_{E+F} \Proj_{F^*}    
\end{equation}

\medskip

Now, fix any $F \in G_{n,d}$ and set $F_0 = F$.  
Choose an index $i_1 \le M$ such that
\[
\|\Proj_{F_0} - \Proj_{i_1}\| \le \varepsilon.
\]
We can write, using \eqref{eq:ojhbakfjhbasfksdjhbfksdjh}:
\[
\Proj_{F_0} - \Proj_{i_1} 
= A_1 \Proj_{i_1} + B_1 \Proj_{F_1},
\]
where $F_1$ is some $d$-dimensional subspace, and
\[
E_{i_1}^\perp \cap (E+F_0)
\subset F_1 \subset E_{i_1}^\perp,
\quad \text{and} \quad
\|A_1\|, \|B_1\| \le \varepsilon.
\]
Equivalently, we may write
\[
\Proj_{F_0} = \tilde A_1 \Proj_{i_1} + B_1 \Proj_{F_1},
\]
where $\|\tilde A_1\| \le 1 + \varepsilon \le 2$ and $\|B_1\| \le \varepsilon$.  
Applying the above construction to subspace $F_1$,
we get
$$
\Proj_{F_1} = \tilde A_2 \Proj_{i_2} + B_2 \Proj_{F_2},
$$
where $i_2\leq M$ and
$\|\tilde A_2\| \le 1 + \varepsilon \le 2$, $\|B_2\| \le \varepsilon$,
and $F_2$ is some $d$--dimensional subspace.
Rearranging, we get
$$
\Proj_{F_0} = \tilde A_1 \Proj_{i_1} +
B_1 \tilde A_2 \Proj_{i_2} + B_1 B_2 \Proj_{F_2}.
$$
Repeating this construction recursively yields
\[
\Proj_{F}
  = \sum_{j=1}^\infty
      \Bigg( \prod_{\ell=1}^{j-1} B_\ell \Bigg)
      \tilde A_j \Proj_{i_j}.
\]
It remains to define for each $j$,
$$
D_j: = \Bigg( \prod_{\ell=1}^{j-1} B_\ell \Bigg)
      \tilde A_j,
$$
so that our estimates on the spectral norms of $B_\ell$'s and $\tilde A_j$ imply
$\|D_j\| \le 2\, \varepsilon^{\,j-1}$.  
\end{proof}

\subsection{Random sign combinations}
In this section, we consider certain properties
of random vectors of the form $\sum_{i=1}^k \sigma_i y_i$,
where $(y_1,\dots,y_k)$
is a $k$--tuple of fixed unit vectors in $\R^n$,
and $\sigma_1,\dots,\sigma_k$ are i.i.d random signs.
As an immediate observation, for every choice of unit vectors $y_1,\dots,y_k$,
$$
\Exp\,\Big\|\sum_{i=1}^k \sigma_i y_i\Big\|_2^2
=k.
$$
The covariance matrix $\Sigma$
of the vector $\sum_{i=1}^k \sigma_i y_i$ is given by
$$
\Sigma=\Exp\,\Big(\sum_{i=1}^k \sigma_i y_i\Big)
\Big(\sum_{i=1}^k \sigma_i y_i\Big)^\top
=\sum_{i=1}^k y_i\,y_i^\top;
$$
in particular, ${\rm tr}\,(\Sigma)=k$, leading to the earlier observation.
The problem we study in this subsection is to identify
necessary conditions on $y_1,\dots,y_k$, a vector $Z\in\R^n$, and a parameter $t\geq 1$
so that probability
$$
\Prob_\sigma
\Big\{
\big|\big\langle Z,\sum_{i=1}^k \sigma_i y_i\big\rangle\big|
\geq t\,\sqrt{k}
\Big\}
$$
has order $k^{-O(1)}$.
The resulting estimates will be employed in a discretization/union bound argument needed to reveal certain structural properties of subspaces of $(\R^n,\|\cdot\|_{P_{N,n}})$.


\begin{lemma}\label{lem:dyaddecomp}
Let $2\leq k\leq n$, let $y_1,y_2,\dots,y_k$
be vectors in $\R^n$ of unit Euclidean length,
and let $Z$ be any fixed vector in $\R^n$.
Further, let $\sigma=(\sigma_1,\dots,\sigma_k)$
be uniform random vector with $\pm 1$ components,
and assume that for some $t\geq 1$
$$
\Prob_\sigma
\Big\{
\big|\big\langle Z,\sum_{i=1}^k \sigma_i y_i\big\rangle\big|
\geq t\,\sqrt{k}
\Big\}\geq \frac{1}{k^{100}}.
$$
Denote by $\Sigma$ the covariance matrix of $\sum_{i=1}^k \sigma_i y_i$.
For each integer $-\infty<p<\infty$,
let $E_p$ 
be the linear span of the eigenvectors of $\Sigma$
corresponding to the eigenvalues in the
range $(2^{p},2^{p+1}]$.
Then 
\begin{itemize}
\item[(a)] either the orthogonal projection of $Z$ onto the linear span of $\{y_1,\dots,y_k\}$
is of length at least $$\frac{c_{\text{\tiny\ref{lem:dyaddecomp}}}\,t\,\sqrt{k}}{\log^{0.5} k},$$
\item[(b)] or, there is $1\leq p\leq\log_2 k$
such that $E_p\neq \{0\}$ and the orthogonal projection of $Z$
onto $E_p$ has Euclidean length at least $$\frac{c_{\text{\tiny\ref{lem:dyaddecomp}}}\,t\,\sqrt{\dim E_p}}{\log^{1.5} k}.$$
\end{itemize}
Here, $c_{\text{\tiny\ref{lem:dyaddecomp}}}>0$
is a universal constant.
\end{lemma}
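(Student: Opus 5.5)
The plan is to reduce everything to a one-dimensional Rademacher sum and then read off the spectral information from its variance.

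\step{Reduction to a Rademacher sum}
Set $a_i:=\langle Z,y_i\rangle$ for $i\leq k$, so that
$$
\Big\langle Z,\sum_{i=1}^k\sigma_i y_i\Big\rangle=\sum_{i=1}^k\sigma_i a_i .
$$
Hoeffding's inequality gives
$$
\Prob_\sigma\Big\{\Big|\sum_i \sigma_i a_i\Big|\geq t\sqrt{k}\Big\}\leq 2\exp\Big(-\frac{t^2k}{2\|a\|_2^2}\Big).
$$
Comparing with the assumed lower bound $k^{-100}$ yields
$$
\|a\|_2^2\geq \frac{t^2k}{2\ln(2k^{100})}\geq \frac{t^2k}{C\log k}.
$$

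\step{Rewriting the variance spectrally}
Observe that
$$
\|a\|_2^2=\sum_i\langle Z,y_i\rangle^2=Z^\top\Sigma Z=\sum_p \lambda\text{-weighted}\ \|\Proj_{E_p}Z\|_2^2 .
$$
More precisely, the eigenvalues on $E_p$ lie in $(2^p,2^{p+1}]$, so
$$
\|a\|_2^2\leq \sum_{p}2^{p+1}\|\Proj_{E_p}Z\|_2^2 .
$$
Two constraints on the eigenvalues are useful. First, every eigenvalue is at most ${\rm tr}\,\Sigma=k$, so only $p<\log_2 k$ occurs. Second, $2^p\dim E_p\leq{\rm tr}\,\Sigma=k$. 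All $E_p$ lie in ${\rm span}\{y_i\}$, which is the range of $\Sigma$.

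\step{Case analysis}
Suppose (a) fails, i.e. $\|\Proj_{{\rm span}\{y_i\}}Z\|_2^2< c^2t^2k/\log k$. The terms with $p\leq 0$ then contribute at most
$$
2\sum_{p\leq 0}\|\Proj_{E_p}Z\|_2^2\leq 2c^2t^2k/\log k.
$$
For $c$ small, this is at most half of $t^2k/(C\log k)$. Hence
$$
\sum_{1\leq p<\log_2 k}2^{p+1}\|\Proj_{E_p}Z\|_2^2\geq \frac{t^2k}{2C\log k}.
$$
There are at most $\log_2 k$ such $p$, so by pigeonhole some $p$ in this range satisfies
$$
2^{p+1}\|\Proj_{E_p}Z\|_2^2\geq \frac{t^2k}{C'\log^2k}.
$$
In particular $E_p\neq\{0\}$. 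Using $2^p\leq k/\dim E_p$ gives
$$
\|\Proj_{E_p}Z\|_2^2\geq \frac{t^2\dim E_p}{2C'\log^2k},
$$
that is, length at least $c\,t\sqrt{\dim E_p}/\log k$. This is stronger than the claimed $\log^{1.5}k$ loss.

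\step{Main obstacle}
There is no serious obstacle. The only delicate point is the bookkeeping of constants: $c$ must be chosen small enough that the $p\leq0$ part is absorbed. One must also note the case $k=2$, where the range $1\leq p<\log_2 k$ is empty. There (a) holds automatically after adjusting constants, since $\|a\|_2^2\leq 2\|\Proj_{{\rm span}\{y_i\}}Z\|_2^2$ always, because the eigenvalues are at most $k=2$.
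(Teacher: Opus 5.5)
Your proof is correct, and it takes a different and cleaner route than the paper. The paper works at the level of tail events. It writes $\langle Z,\sum_i\sigma_i y_i\rangle=\sum_p\langle Z,\Proj_{E_p}(\sum_i\sigma_i y_i)\rangle$, applies Khintchine-type subgaussian tails to each dyadic piece separately, and then union-bounds over scales. To do this it splits the threshold linearly into $t\sqrt{k}/(2\log_2 k)$ for $1\le p\le\log_2 k$, and uses geometrically decaying thresholds $\tilde c\,t\sqrt{k}\,2^{p/4}$ so that the union over the infinitely many $p\le 0$ is summable. You instead apply Hoeffding once to the scalar Rademacher sum to get the variance bound $Z^\top\Sigma Z=\sum_i\langle Z,y_i\rangle^2\gtrsim t^2k/\log k$. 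You then decompose this variance deterministically, using that it is exactly additive over the mutually orthogonal spaces $E_p$, so everything that remains is pigeonholing. Because you pigeonhole on a squared quantity rather than splitting the threshold itself, the number of scales costs you only $\sqrt{\log k}$ in length instead of $\log k$. That is why you get $\log k$ rather than $\log^{1.5}k$ in alternative (b), a slightly stronger conclusion. You also avoid the infinite union bound over $p\le 0$: those scales are absorbed by the single bound $2\|\Proj_{{\rm span}\{y_i\}}Z\|_2^2$, and your treatment of $k=2$ (empty range of $p$) is correct. The paper's version buys nothing essential here beyond being phrased directly in terms of the tail event; your argument proves the lemma more economically.
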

\begin{proof}
Denote by $\Proj$ the orthogonal projection
onto the linear span of $\{y_1,\dots,y_k\}$, and
for all $-\infty< p< \infty$, let $\Proj_p$
be the orthogonal projection onto $E_p$.
Note that whenever the inner product $\big|\big\langle Z,\sum_{i=1}^k \sigma_i y_i\big\rangle\big|
\geq t\,\sqrt{k}$ with a positive probability,
we necessarily have $\|\Proj(Z)\|_2\geq t/\sqrt{k}$ (as the Euclidean length
of $\sum_{i=1}^k \sigma_i y_i$ can never exceed $k$).
For that reason, by choosing an appropriate constant 
$c_{\text{\tiny{\ref{lem:dyaddecomp}}}}>0$, we get that the assertion (a)
above is trivially true for small $k$.
In what follows, we assume that parameter $k$ is large.
Let $\Sigma$ be the covariance matrix of the random vector
$\sum_{i=1}^k \sigma_i y_i$.

For each $-\infty<p<\infty$, $\Proj_p\big(\sum_{i=1}^k \sigma_i y_i\big)$
is a centered random vector in $E_p$
with covariance matrix
$$
\Exp\;\Proj_p\big(\sum_{i=1}^k \sigma_i y_i\big)\;\Proj_p\big(\sum_{i=1}^k \sigma_i y_i\big)^\top
=\Proj_p\,\Sigma\, \Proj_p^\top,
$$
where all non-zero eigenvalues lie in the interval $(2^p,2^{p+1}]$.
Thus, we have
$$
\Exp\,\big\langle Z,\Proj_p\Big(\sum_{i=1}^k \sigma_i y_i\Big)\big\rangle^2
\in (\|\Proj_p(Z)\|_2^2\,2^p,\|\Proj_p(Z)\|_2^2\,2^{p+1}].
$$
On the other hand, in view of Khintchine's inequality (see,
for example, \cite[Section~2.6]{Ver18}),
the random variable
$$
\frac{\big\langle Z,\Proj_p\Big(\sum_{i=1}^k \sigma_i y_i\Big)\big\rangle}
{\sqrt{\Exp\,\big\langle Z,\Proj_p\big(\sum_{i=1}^k \sigma_i y_i\big)\big\rangle^2}}
$$
is $C$--subgaussian, for a universal constant $C>0$.
Combining the last two observations, we get
\begin{equation}\label{aksjnfakfjnsk}
\Prob\Big\{\big|\big\langle Z,\Proj_p
\Big(\sum_{i=1}^k \sigma_i y_i\Big)\big\rangle\big|\geq s\,
\|\Proj_p(Z)\|_2\,2^{p/2}\Big\}
\leq 2\exp(-cs^2),\quad s>0,
\end{equation}
for a universal constant $c>0$.

Write
$$
\big\langle Z,\sum_{i=1}^k \sigma_i y_i\big\rangle
=
\sum\limits_{-\infty< p< \infty}
\big\langle Z,\Proj_p\Big(\sum_{i=1}^k \sigma_i y_i\Big)\big\rangle.
$$
As we remarked before,
the trace of $\Sigma$ equals $k$, implying, in particular,
that $\Sigma$ does not have eigenvalues exceeding $k$.
Thus, $\Proj_p={\bf 0}$ for all $p>\log_2 k$,
and, we can write for a sufficiently small universal constant 
$\tilde c>0$:
\begin{align*}
\Prob
\Big\{
\big|\big\langle Z,\sum_{i=1}^k \sigma_i y_i\big\rangle\big|
\geq t\,\sqrt{k}
\Big\}
&\leq 
\Prob
\Big\{
\big|\big\langle Z,\Proj_p\Big(\sum_{i=1}^k \sigma_i y_i\Big)
\big\rangle\big|
\geq \frac{t\,\sqrt{k}}{2\,\log_2 k}
\mbox{ for some $1\leq p\leq \log_2 k$}
\Big\}\\
&+\sum\limits_{p\leq 0}
\Prob
\Big\{
\big|\big\langle Z,\Proj_p\Big(\sum_{i=1}^k \sigma_i y_i\Big)
\big\rangle\big|
\geq \tilde c\,t\,\sqrt{k}\,2^{p/4}
\Big\}.
\end{align*}
By the assumptions of the lemma,
the left hand side of the last inequality is at least $k^{-100}$.
Therefore, either
$$
\Prob
\Big\{
\big|\big\langle Z,\Proj_p\Big(\sum_{i=1}^k \sigma_i y_i\Big)
\big\rangle\big|
\geq \frac{t\,\sqrt{k}}{2\,\log_2 k}\Big\}
\geq \frac{1}{2k^{100}\log_2 k}\;\;\;
\mbox{for some $1\leq p\leq \log_2 k$},
$$
or there is $p\leq 0$ such that
$$
\Prob
\Big\{
\big|\big\langle Z,\Proj_p\Big(\sum_{i=1}^k \sigma_i y_i\Big)
\big\rangle\big|
\geq \tilde c\,t\,\sqrt{k}\,2^{p/4}
\Big\}\geq \frac{c'\,2^{p/4}}{k^{100}},
$$
for an appropriate universal constant $c'>0$.
In view of \eqref{aksjnfakfjnsk}, we obtain that
either for some $1\leq p\leq \log_2 k$ we have
\begin{equation}\label{aldjhfbajfhbojab}
\tilde C\,\sqrt{\log k}\,
\|\Proj_p(Z)\|_2\,2^{p/2}\geq 
\frac{t\,\sqrt{k}}{2\,\log_2 k},
\end{equation}
or there is $p\leq 0$ such that
\begin{equation}\label{ajhdbfaojhfbsojafhbs}
\tilde C\,\,\sqrt{-p+\log k}\,
\|\Proj(Z)\|_2\,2^{p/2}\geq
\tilde C\,\,\sqrt{-p+\log k}\,
\|\Proj_p(Z)\|_2\,2^{p/2}\geq
\tilde c\,t\,\sqrt{k}\,2^{p/4},
\end{equation}
for a constant $\tilde C>0$.
It remains to note that for every $1\leq p\leq \log_2 k$,
$\dim E_p\leq k\,2^{-p}$ (in view of the relation
${\rm tr}\,(\Sigma)\leq k$), so \eqref{aldjhfbajfhbojab}
implies
$$
\|\Proj_p(Z)\|_2\geq 
\frac{t\,\sqrt{\dim E_p}}{2\,\tilde C\,\log_2 k\;\sqrt{\log k}}.
$$
On the other hand, $\frac{2^{-p/4}}{\sqrt{-p+\log k}}
\geq \frac{c''}{\sqrt{\log k}}$ for all $p\leq 0$
and a universal constant $c''>0$,
so whenever \eqref{ajhdbfaojhfbsojafhbs} holds,
we have
$$
\|\Proj(Z)\|_2\geq
\frac{c''\,\tilde c\,t\,\sqrt{k}}{\tilde C\,\sqrt{\log k}}.
$$
The result follows.
\end{proof}

\subsection{Gaussian vector tuples}
In this subsection, we study
concentration properties of the Gaussian tuple $X_1,\dots,X_N$.
Recall that we denote by $A$ the $n\times N$
random matrix with columns $X_1,\dots,X_N$.
We start with a direct consequence of standard
concentration results on random matrices with
subgaussian entries (see, for example, \cite[Section~4.6]{Ver18}).

\begin{lemma}[Singular‐value stability for sparse column subsets]
  \label{lem:sparse_singular_values}
   For every choice of parameters $K'\geq K>1$ there exist $c_{\text{\tiny\ref{lem:sparse_singular_values}}}\in(0,1/100]$
  depending only on $K',K$ such that the following holds.
  Assume that $n\geq c_{\text{\tiny\ref{lem:sparse_singular_values}}}^{-1}$
  and that $K'\geq \frac{N}{n}\geq K$.
  Define event
  \begin{align*}
  \Omega_{\text{\tiny\ref{lem:sparse_singular_values}}}:=
  &\big\{\|A\| \le 4\sqrt{N}\big\}\,\cap\,
  \Big\{s_{\min}(A^\top) 
\ge \tfrac{1}{2}(\sqrt{N} - \sqrt{n})\Big\}\,\cap\\
  &\bigg\{\mbox{For every non-empty $J\subset[N]$ with $|J|\leq c_{\text{\tiny\ref{lem:sparse_singular_values}}}\,n$},\\
  &\hspace{0.3cm}\sqrt{n}\Bigl(1- 
  \sqrt{{|J|}/{n}}\,\log\frac{n}{|J|}  \Bigr)
      \le
      s_{\min}(A_J)
      \le
      s_{\max}(A_J)
      \le
      \sqrt{n}\Big(1+
      \sqrt{{|J|}/{n}}\,\log\frac{n}{|J|} \Big)
  \bigg\}.
  \end{align*}
Then 
$$
\Prob(\Omega_{\text{\tiny\ref{lem:sparse_singular_values}}})
\geq 1-\frac{1}{n}.
$$
\end{lemma}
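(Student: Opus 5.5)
We treat the three events separately and finish with a union bound, so that each failure probability is at most $\frac{1}{3n}$ once $n\geq c_{\text{\tiny\ref{lem:sparse_singular_values}}}^{-1}$. The basic tool is the Gaussian concentration for extreme singular values: for an $m\times \ell$ matrix $G$ with i.i.d.\ $N(0,1)$ entries,
$$\sqrt{m}-\sqrt{\ell}-s\le s_{\min}(G)\le s_{\max}(G)\le\sqrt{m}+\sqrt{\ell}+s$$
with probability at least $1-2e^{-s^2/2}$. This follows from Gordon's inequality together with Lipschitz concentration, as in \cite[Section~4.6]{Ver18}.

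\emph{The global events.} For $\|A\|$, take $s=\sqrt{N}$ to get $\|A\|\le\sqrt{n}+2\sqrt{N}\le 3\sqrt N$, except with probability $2e^{-N/2}$. For $s_{\min}(A^\top)$, note that $A^\top$ is $N\times n$, so $s_{\min}(A^\top)\ge\sqrt N-\sqrt n-s$. Choose $s=\frac12(\sqrt N-\sqrt n)$. Since $N\ge Kn$, this is at least $\frac12(\sqrt K-1)\sqrt n$, and the failure probability is $2\exp(-c(K)n)$. Both failure probabilities are much smaller than $1/n$ once $n$ exceeds a constant depending on $K$.

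\emph{The sparse event.} Fix $j\in[1,c\,n]$ and a set $J$ with $|J|=j$. Then $A_J$ is $n\times j$, and its singular values lie in $\sqrt n\pm(\sqrt j+s)$ with probability at least $1-2e^{-s^2/2}$. Choose
$$s=\tfrac12\sqrt{j}\,\log\tfrac nj.$$
With this choice, and because $\log\frac nj\ge\log(1/c)\ge 4$, we get $\sqrt j+s\le\sqrt j\log\frac nj$, which is the required deviation $\sqrt n\cdot\sqrt{j/n}\,\log\frac nj$. There are $\binom Nj\le (eK'n/j)^j$ sets of size $j$, so the union bound over them costs
$$2\exp\Big(j\log\tfrac{eK'n}{j}-\tfrac18 j\log^2\tfrac nj\Big).$$
Once $\log\frac nj\ge\log(1/c)$ is large relative to $\log(eK')$, the quadratic term dominates, and the exponent is at most $-\frac1{16}j\log^2\frac nj$. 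This is where $c$ is chosen small depending on $K'$.

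The main obstacle is the final sum over $j$, especially the small values $j=1,2,\dots$, where the bound has to be $\ll 1/n$. For $j=1$ the failure probability is $2\exp(-\frac1{16}\log^2 n)$. This decays faster than any power of $n$, so it is at most $\frac1{6n^2}$ for $n$ large. In general the function $j\log^2(n/j)$ is increasing on $[1,n/e^2]$, so every term is bounded by the $j=1$ term. Summing over at most $n$ values of $j$ therefore gives a total of at most $n\cdot 2e^{-\log^2 n/16}\le\frac1{3n}$ for $n\ge c^{-1}$, after shrinking $c$ if necessary. Since the constraint $|J|\le cn$ makes the logarithmic factors large, the $\log\frac nj$ slack absorbs both the entropy term and the additive $\sqrt j$.

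Combining the three estimates gives $\Prob(\Omega_{\text{\tiny\ref{lem:sparse_singular_values}}})\ge 1-\frac1n$.
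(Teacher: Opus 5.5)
Your proof is correct and is essentially the argument the paper intends. The paper gives no separate proof; it calls the lemma a direct consequence of standard singular-value concentration for random matrices \cite[Section~4.6]{Ver18}. You make this explicit with the sharp Gordon-type Gaussian bound plus a union bound over supports, with the $\log\frac{n}{j}$ slack absorbing both the entropy $\log\binom{N}{j}$ and the additive $\sqrt{j}$. Using the sharp constant-one Gaussian bound is also the right choice for the global event on $s_{\min}(A^\top)$. A subgaussian bound with an unspecified absolute constant in front of $\sqrt{n}$ would be useless there when $N/n$ is close to $1$.
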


The next lemma deals with statistical properties of projections
of the random vectors $X_1,\dots,X_N$,
and is based on a discretization of the Grassmannian
supplied by Lemma~\ref{lem:Grassnets}.

\begin{lemma}
\label{lem:d_dim_proj}
There is a universal constant $C_{\tref{lem:d_dim_proj}}\ge 1$
with the following property.
Let $1\leq d\leq n/2$ and $N\geq n$.
Then with probability at least
$1 - \exp(- C_{\tref{lem:d_dim_proj}}\; dn)$,  
for every $s \ge C_{\tref{lem:d_dim_proj}}$ and
for every $d$-dimensional subspace $F$ of $\R^n$, the number of indices $1 \le j \le N$ such that
\[
\|\Proj_F X_j\|_2 > 8s \sqrt{d},
\]
is bounded above by
$\frac{2\,C_{\tref{lem:d_dim_proj}}^2\,N\, \log_2 s}{s^2}$.
Here, $\Proj_F$ denotes the orthogonal projection onto $F$.
\end{lemma}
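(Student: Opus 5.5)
The plan is to discretize the Grassmannian with Lemma~\ref{lem:Grassnets} at $\varepsilon=1/4$, prove a counting bound for each subspace of the net and each dyadic level by a binomial tail estimate plus a union bound, and then pass to an arbitrary $F$ through the series representation \eqref{eq: grass 209548y3}. Let $\{E_1,\dots,E_M\}$ be the net supplied by Lemma~\ref{lem:Grassnets} with $\varepsilon=1/4$, so that $M\le (4C_{\tref{lem:Grassnets}})^{d(n-d)}\le \exp(C_0 dn)$.

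\emph{Reduction to the net.} Fix $F$ and write $\Proj_F=\sum_{j\ge1}D_j\Proj_{E_{i_j}}$ with $\|D_j\|\le 2\cdot 4^{-(j-1)}$. Then for every vector $X$,
\[
\|\Proj_F X\|_2\le \sum_{j\ge1} 2\cdot 4^{-(j-1)}\,\|\Proj_{E_{i_j}}X\|_2 .
\]
Suppose $\|\Proj_{E_{i_j}}X\|_2\le 2^{j}s\sqrt d$ for every $j$. The right-hand side is then at most
\[
2s\sqrt d\sum_{j\ge1}2^j4^{-(j-1)}=8s\sqrt d .
\]
Hence every $X_\ell$ with $\|\Proj_FX_\ell\|_2>8s\sqrt d$ satisfies $\|\Proj_{E_{i_j}}X_\ell\|_2>2^js\sqrt d$ for some $j\ge1$. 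It follows that
\[
\#\{\ell:\|\Proj_FX_\ell\|_2>8s\sqrt d\}\;\le\;\sum_{j\ge1}\max_{i\le M}\#\{\ell:\|\Proj_{E_i}X_\ell\|_2>2^js\sqrt d\}.
\]
It therefore suffices to show that, with the required probability, for every $i\le M$ and every level $t\ge C$ taken from a dyadic grid,
\[
\#\{\ell:\|\Proj_{E_i}X_\ell\|_2>t\sqrt d\}\le C_1N/t^2 .
\]
Summing over $t=2^js$ then gives $\le C_1N/s^2\sum_j 4^{-j}$. To handle a general real $s\ge C$, I round $s$ down to a power of $2$. This loses only a constant factor, which is absorbed into $C_{\tref{lem:d_dim_proj}}$; the $\log_2 s$ factor in the statement leaves additional room.

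\emph{Fixed subspace, fixed level.} For fixed $E\in G_{n,d}$, the quantity $\|\Proj_EX_\ell\|_2^2$ is $\chi^2_d$-distributed. Hence $p:=\Prob\{\|\Proj_EX_\ell\|_2>t\sqrt d\}\le \exp(-ct^2d)$ for $t\ge C$, and the count is $\mathrm{Bin}(N,p)$. With $m:=\lceil C_1N/t^2\rceil$,
\[
\Prob\{\mathrm{count}\ge m\}\le\binom Nm p^m\le \exp\!\big(-m\,(ct^2d-\log(eN/m))\big).
\]
Since $\log(eN/m)\le\log(et^2/C_1)\le ct^2d/2$ for $t\ge C$, this is at most
\[
\exp(-cmt^2d/2)\le \exp\!\big(-\tfrac c2\max(C_1N d,\,t^2d)\big).
\]
Choosing $C_1$ large makes $\tfrac c2C_1Nd\ge (C_0+2C_{\tref{lem:d_dim_proj}}+1)dn$, which beats the net cardinality $\exp(C_0dn)$. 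Keeping the $t^2d$ term as well makes the bounds summable over the dyadic levels $t=2^l$.

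\emph{Main obstacle: infinitely many levels.} The union over levels must be handled with care, because there are infinitely many of them. For levels with $t^2\ge C_1 N$ the target bound $C_1N/t^2<1$ forces the count to be zero, and the binomial bound above applies with $m=1$. It gives $N\exp(-ct^2d)\le\exp(-c't^2d)$, which is summable over $t=2^l\ge\sqrt{C_1N}$ and still dominates $\exp(C_0dn)\cdot\exp(C_{\tref{lem:d_dim_proj}}dn)$, since $t^2\ge C_1N\ge C_1n$. Alternatively, one can intersect with the event $\max_\ell\|X_\ell\|_2\le 2\sqrt{N}$, which truncates the levels. Summing all failure probabilities over the net and the dyadic levels yields total failure at most $\exp(-C_{\tref{lem:d_dim_proj}}dn)$ once the universal constants are fixed. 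The delicate point is the order in which they are fixed: first $C_0$ from the net, then $C_1$, then $C_{\tref{lem:d_dim_proj}}$, with $C_{\tref{lem:d_dim_proj}}$ chosen so that $2C_{\tref{lem:d_dim_proj}}^2\ge C_1\cdot(\text{rounding loss})$.
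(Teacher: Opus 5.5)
Your proposal is correct and follows essentially the same route as the paper: the $\varepsilon=1/4$ net from Lemma~\ref{lem:Grassnets}, a binomial tail bound for each net subspace and each dyadic level followed by a union bound, and the series representation \eqref{eq: grass 209548y3} to pass to an arbitrary $F$. The paper uses the count bound $Nr/4^r$ at threshold $C_{\tref{lem:d_dim_proj}}2^r\sqrt d$; the extra factor $r$ makes the failure probabilities summable in $r$ and is the source of the $\log_2 s$ in the statement, whereas you obtain summability from the $t^2d$ term and lose no logarithm.

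The only slip is that your stated order of fixing constants is circular: $C_1$ must grow with $C_{\tref{lem:d_dim_proj}}$ to beat $\exp(-C_{\tref{lem:d_dim_proj}}dn)$. This is harmless, because that requirement is linear in $C_{\tref{lem:d_dim_proj}}$ while the final one is quadratic. For example, take $C_1:=C_{\tref{lem:d_dim_proj}}^2$, and then choose $C_{\tref{lem:d_dim_proj}}$ large enough that $\tfrac c2 C_{\tref{lem:d_dim_proj}}^2\ge C_0+2C_{\tref{lem:d_dim_proj}}+1$.
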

\begin{proof}
We will assume that the constant
$C_{\tref{lem:d_dim_proj}}$ is large.
We start by applying Lemma~\ref{lem:Grassnets}
with $\varepsilon:=\frac{1}{4}$,
to obtain a collection of at most 
\begin{equation}\label{eq:ljhbafjhbfkjahsbfkjhfb}
M\leq \left( \frac{C_{\text{\tiny\ref{lem:Grassnets}}}}{\varepsilon} \right)^{d(n - d)}    
\end{equation}
subspaces $\{E_1, \dots, E_M\}\subset G_{n,d}$
such that
\[
\forall F \in G_{n,d} \;\; \exists\; i \le M \text{ satisfying } \|\Proj_F - \Proj_{E_i}\| \le \varepsilon,
\]
and, moreover,
for every $F \in G_{n,d}$, there is a sequence of indices
$i_1,i_2,\dots$ in $[M]$ and operators $D_1,D_2,\dots:\R^n\to\R^n$
such that $\|D_j\| \le 2\, \varepsilon^{\,j-1}$, $j\geq 1$, and
\[
\Proj_{F}= \sum_{j=1}^\infty D_j \Proj_{E_{i_j}}.
\]
In what follows, we denote $\Proj_{E_{i}}$ by $\Proj_{i}$
for all $i\leq M$.

Define event $\Omega$ that for every positive integer $r$ and for each $E_i$ ($1 \le i \le M$),  
the set
\[
  J_{i,r} := \Big\{ j \in [N] : \|\Proj_{i} X_j\|_2 \ge C_{\tref{lem:d_dim_proj}}\, 2^r \sqrt{d} \Big\}
\]
has size
\[
  |J_{i,r}| \le \frac{N\,r}{4^r}.
\]
Since each $\Proj_{i} X_j$ is a standard Gaussian vector in
a $d$--dimensional subspace, its expected Euclidean norm is of order $(1 + o_d(1)) \sqrt{d}$.  
By a standard concentration inequality (see, for example, \cite{Ver18}), and assuming that $C_{\tref{lem:d_dim_proj}}\, 2^r \sqrt{d}\geq
2\,\Exp\,\|\Proj_{i} X_j\|_2$,
we obtain
\[
  \Prob\big\{
  \|\Proj_{i} X_j\|_2 \ge C_{\tref{lem:d_dim_proj}}\, 2^r \sqrt{d} \big\}
  \le \underbrace{\exp\big(-c\, C_{\tref{lem:d_dim_proj}}^2 4^r d \big)}_{:= \Delta_r},
\]
for some universal constant $c > 0$. Thus, $|J_{i,r}|$ is an
$(N,p)$ binomial random variable with $p\leq\Delta_r$, implying
\[
  \Exp|J_{i,r}| \le N \Delta_r.
\]
Further, in view of
Chernoff’s inequality (see \cite[Section~2.3]{Ver18}), for a binomial random variable $Z$ with a mean
$\mu$
we have for every $t>\mu$:
\[
  \Prob\{Z \ge t\}
  \le \Big(\frac{e\mu}{t}\Big)^t.
\]
Applying this with $\mu \leq N \Delta_r$, we obtain, for every
$i \le M$, $r \ge 1$,
\[
  \Prob\!\left\{ |J_{i,r}| \ge \frac{r\,N}{4^r } \right\}
  \leq
  \Big(\frac{e\,4^r\Delta_r}{r}\Big)^{N\,r/4^r}
  \le \exp\!\big( - c'\, C_{\tref{lem:d_dim_proj}}^2 N r d \big),
\]
for some universal constant $c'>0$,
as long as $C_{\tref{lem:d_dim_proj}}$ is sufficiently large.
Taking a union bound over all $i \le M$ and $r \ge 1$, 
and using \eqref{eq:ljhbafjhbfkjahsbfkjhfb}, we obtain
\begin{align*}
  \Prob(\Omega^c)
  &\le \sum_{r=1}^\infty M \exp\!\big(-c' C_{\tref{lem:d_dim_proj}}^2 N r d \big)\le \exp(-C_{\tref{lem:d_dim_proj}}\,Nd),
\end{align*}
for $C_{\tref{lem:d_dim_proj}}$ sufficiently large.

\medskip

Now, condition on any realization of $X_1,\dots,X_N$
from $\Omega$.
Fix any $s\geq C_{\tref{lem:d_dim_proj}}$, and
let $r_0$ be the integer such that 
$$
  C_{\tref{lem:d_dim_proj}}\, 2^{r_0} \le s < C_{\tref{lem:d_dim_proj}}\, 2^{r_0+1}.
$$
Further, fix any $d$--dimensional subspace $F$.
Consider the decomposition
\[
  \Proj_F = \sum_{\alpha = 1}^\infty D_\alpha \Proj_{i_\alpha},
\]
as established above, so that $\|D_\alpha\|\leq
2\,\big(\frac{1}{4}\big)^{\alpha-1}$, $\alpha\geq 1$.
Observe that, by the definition of $\Omega$,
\[
  \Bigg| \bigcup_{\alpha \ge 1} J_{i_\alpha,r_0+\alpha} \Bigg|
  \le \sum_{\alpha = 1}^\infty |J_{i_\alpha,r_0+\alpha}|
  \le \sum_{\alpha = 1}^\infty \frac{ N(r_0+\alpha)}{4^{r_0+\alpha}}
  \le  \frac{2 N (r_0+1)}{4^{r_0+1}}
  \le \frac{2C_{\tref{lem:d_dim_proj}}^2\,N\, \log_2 s}{s^2}.
\]
Finally, for any index $j \notin \bigcup_{\alpha \ge 1} J_{i_\ell,r_0+\alpha}$, we have
\[
  \|\Proj_{i_\alpha} X_j\|_2 \le C_{\tref{lem:d_dim_proj}}\, 2^{r_0+\alpha} \sqrt{d}, \quad \forall\, \alpha \ge 1.
\]
Hence, for such $j$,
\begin{align*}
  \|\Proj_F X_j\|_2
  &\le \sum_{\alpha = 1}^\infty \|D_\alpha\| \, \|\Proj_{i_\alpha} X_j\|_2\\
  &\le 2\,\sum_{\alpha = 1}^\infty 
  \Big(\frac{1}{4}\Big)^{\alpha-1} \,C_{\tref{lem:d_dim_proj}} 2^{r_0+\alpha} \sqrt{d}\\ 
  &= 8\,C_{\tref{lem:d_dim_proj}}\, 2^{r_0} \sqrt{d}.
\end{align*}
Therefore, 
\[
  \|\Proj_F X_j\|_2 \le 8s \sqrt{d}
\]
holds for all but at most $\frac{2C_{\tref{lem:d_dim_proj}}^2\,N\, \log_2 s}{s^2}$ indices $j \in [N]$.  
This completes the proof.
\end{proof}

\subsection{Random sign combinations and Gaussian vector tuples}
As a final piece of the preliminaries section,
we consider a corollary of Lemma~\ref{lem:dyaddecomp} and Lemma~\ref{lem:d_dim_proj}:
\begin{lemma}\label{lem:dotprodstat}
For every $K'\geq K>1$ there is a constant
$\tilde C_{\text{\tiny\ref{lem:dotprodstat}}}>
2\,c_{\text{\tiny\ref{lem:sparse_singular_values}}}^{-1}$
depending on $K',K$,
with the following properties. Assume that $K'\geq \frac{N}{n}\geq K$.
Define event $\Omega_{\text{\tiny\ref{lem:dotprodstat}}}$
that $X_j$'s ($1\leq j\leq N$) satisfy all of the following:
\begin{enumerate}
\item 
For every $2\leq k\leq n/2$, every $t\geq
\tilde C_{\text{\tiny\ref{lem:dotprodstat}}}
\log^{2} k$,
and every $k$--tuple $y_1,y_2,\dots,y_k$ of vectors of unit Euclidean length in $\R^n$,
the number of $X_j$'s ($1\leq j\leq N$) such that
$$
\Prob_{\sigma\in\{-1,1\}^k}
\Big\{
\big|\big\langle X_j,\sum_{i=1}^k \sigma_i y_i\big\rangle\big|
\geq t\,\sqrt{k}
\Big\}\geq \frac{1}{k^{100}},
$$
is bounded above by
$$
n\,\frac{\tilde C_{\text{\tiny\ref{lem:dotprodstat}}}\log t\;
\log^{4} k }{t^2}.
$$
\item For every $2\leq k\leq n/2$
and every $k$--tuple $y_1,y_2,\dots,y_k$ of unit vectors in $\R^n$,
the number of $X_j$'s ($1\leq j\leq N$)
whose orthogonal projection onto the linear span of $y_i$'s
has length greater than $\tilde C_{\text{\tiny\ref{lem:dotprodstat}}}\,k^{10}$,
is bounded above by 
$$
n\,k^{-10}.
$$
\end{enumerate}
Then the probability of $\Omega_{\text{\tiny\ref{lem:dotprodstat}}}$
is $1-\exp(-\Omega(n))$.
\end{lemma}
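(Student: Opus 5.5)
The plan is to intersect the event of Lemma~\ref{lem:d_dim_proj}, applied for every dimension $1\le d\le n/2$, with a simple bound on the norms of the $X_j$'s, and then derive both parts of the statement deterministically on that intersection. A union bound over $d$ costs $\sum_{d\le n/2}\exp(-C_{\tref{lem:d_dim_proj}}dn)=\exp(-\Omega(n))$. In Lemma~\ref{lem:d_dim_proj} the number of exceptional indices is $\frac{2C^2N\log_2 s}{s^2}$, and we convert it to $n$ using $N\le K'n$. All constants are then absorbed into $\tilde C_{\text{\tiny\ref{lem:dotprodstat}}}$, which we take larger than $2c_{\text{\tiny\ref{lem:sparse_singular_values}}}^{-1}$.

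\emph{Part (2).} Let $F$ be the span of $y_1,\dots,y_k$, and set $d=\dim F\le k$ (enlarge $F$ to dimension exactly $k\le n/2$ if needed, which only increases projections). Apply Lemma~\ref{lem:d_dim_proj} with $d=k$ and $s:=\tilde C k^{10}/(8\sqrt{k})$. This gives at most $2C^2K'n\log_2 s/s^2\lesssim n\,k^{-19}\log k\le n k^{-10}$ indices $j$ with $\|\Proj_F X_j\|_2>\tilde C k^{10}$, provided $\tilde C$ is large.

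\emph{Part (1).} Fix $k$, $t$, and the tuple, and let $j$ be an index for which the probability condition holds. Apply Lemma~\ref{lem:dyaddecomp} with $Z=X_j$. Either (a) $\|\Proj_F X_j\|_2\ge c\,t\sqrt{k}/\log^{0.5}k$, or (b) for some $1\le p\le\log_2 k$ we have $\|\Proj_{E_p}X_j\|_2\ge c\,t\sqrt{\dim E_p}/\log^{1.5}k$. The subspaces $F$ and $E_p$ depend only on the tuple, not on $j$, so we can count the indices of each type using Lemma~\ref{lem:d_dim_proj}.

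In case (a), with $d=\dim F\le k$, we have $\|\Proj_F X_j\|_2\ge 8s\sqrt{d}$ for $s=ct/(8\log^{0.5}k)$. Since $t\ge\tilde C\log^2 k$, this $s$ exceeds $C_{\tref{lem:d_dim_proj}}$. Hence the number of such $j$ is at most $\lesssim N\log t\,\log k/t^2$. In case (b), for each of the at most $\log_2 k+1$ values of $p$, take $s_p=ct/(8\log^{1.5}k)\ge C_{\tref{lem:d_dim_proj}}$; this is exactly where $t\gtrsim\log^2k$ is needed. Then the number of $j$ is at most $\lesssim N\log t\,\log^3k/t^2$ for each $p$. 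Summing over $p$ gives $\lesssim n\log t\,\log^4 k/t^2$, as required.

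A small technical point is dimensions: Lemma~\ref{lem:d_dim_proj} requires $d\le n/2$, and this holds since $\dim E_p\le\dim F\le k\le n/2$. Degenerate cases, where $E_p=\{0\}$ or $d$ is small, are handled by embedding into a larger subspace of dimension $d$, which only shrinks the effective ratio; alternatively we apply the lemma with that $d$ directly, since the event holds simultaneously for all $d$. The main point requiring care is that the statement quantifies over all $t$, all $k$, and all tuples simultaneously. This is handled because Lemma~\ref{lem:d_dim_proj} already holds uniformly in $s$ and in $F$ on a single event for each $d$. So no further discretization is needed, and the only remaining obstacle is the bookkeeping of logarithmic factors.
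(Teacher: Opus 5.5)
Your proposal is correct and matches the paper's proof: intersect the events of Lemma~\ref{lem:d_dim_proj} over all $1\le d\le n/2$, then sort the exceptional indices by the two alternatives of Lemma~\ref{lem:dyaddecomp}. Count each alternative with Lemma~\ref{lem:d_dim_proj} applied with $d=\dim F$ and $d=\dim E_p$, sum over $p$, and get part (2) directly from Lemma~\ref{lem:d_dim_proj}. The extra bound on $\|X_j\|_2$ is never used, and in case (b) enlarging $E_p$ would lose a factor, since the threshold scales with $\sqrt{\dim E_p}$; your alternative of applying the lemma with $d=\dim E_p$ is the right one, and $E_p\neq\{0\}$ is guaranteed by Lemma~\ref{lem:dyaddecomp}(b).
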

\begin{proof}
In what follows, we assume that constants
$\tilde C_{\text{\tiny\ref{lem:dotprodstat}}}$
and $C_{\text{\tiny\ref{lem:dotprodstat}}}$
are large.
We define $\Omega_{\text{\tiny\ref{lem:dotprodstat}}}$
as the event that the assertion of Lemma~\ref{lem:d_dim_proj}
holds true simultaneously for all $1\leq d\leq n/2$,
that is,
for every $s \ge C_{\tref{lem:d_dim_proj}}$ and
for every $d$-dimensional subspace $F$ of $\R^n$, the number of indices $1 \le j \le N$ such that
\[
\|\Proj_F X_j\|_2 > 8s \sqrt{d},
\]
is bounded above by
$\frac{2\,C_{\tref{lem:d_dim_proj}}^2\,N\, \log_2 s}{s^2}$.
Note that $\Prob(\Omega_{\text{\tiny\ref{lem:dotprodstat}}})=1-\exp(-\Omega(n))$.
In what follows, we condition on arbitrary realization
of $X_1,\dots,X_N$ from $\Omega_{\text{\tiny\ref{lem:dotprodstat}}}$.

\medskip

Fix any $k$--tuple $(y_1, \dots, y_k)$ of unit vectors in $\R^n$, and let 
\[
  t \ge \tilde{C}_{\text{\tiny\ref{lem:dotprodstat}}}
  \, \log^{2} k.
\]
Define the set
\[
  J :=
  \Bigg\{
    j \in [N] : 
    \mathbb{P}_{\sigma \in \{-1,1\}^k}
    \Big(
      \big| \big\langle X_j, \sum_{i=1}^k \sigma_i y_i \big\rangle \big|
      \ge t \sqrt{k}
    \Big)
    \ge \frac{1}{k^{100}}
  \Bigg\}.
\]

\medskip

By Lemma~\ref{lem:dyaddecomp}, for each $j \in J$, one of the following two alternatives holds:
\begin{itemize}
  \item[\textbf{(i)}] 
  \(
    \|\Proj_E(X_j)\|_2
    \ge
    \dfrac{c_{\tref{lem:dyaddecomp}} \, t \, \sqrt{\dim E}}
    {\sqrt{\log k}},
  \)
  where $E = \mathrm{span}\{y_1, \dots, y_k\}$; or
  \item[\textbf{(ii)}]
  there exists $1 \le p \le \log_2 k$ such that
  \(
    \|\Proj_{E_p}(X_j)\|_2
    \ge
    \dfrac{c_{\tref{lem:dyaddecomp}} \, t \,
    \sqrt{\dim E_p}}{(\log k)^{3/2}},
  \)
  where $E_p$ is a subspace of dimension at most $k\leq n/2$,
  defined as in Lemma~\ref{lem:dyaddecomp}.
\end{itemize}

\medskip

In view of the assumptions on $t$,
$\frac{c_{\tref{lem:dyaddecomp}} \, t}{(\log k)^{3/2}}
\geq 8\,C_{\tref{lem:d_dim_proj}}$.
Applying Lemma~\ref{lem:d_dim_proj}, we obtain that 
for every $1\leq p\leq \log_2 k$,
the size
of the set
\[
  \bigg\{
    j \in [N] :
    \|\Proj_{E_p}(X_j)\|_2
    \ge
    \frac{c_{\tref{lem:dyaddecomp}} \, t \, \sqrt{\dim E_p}}
    {(\log k)^{3/2}}
  \bigg\}
\]
is bounded above by
\begin{equation}\label{eq:ljhablsjhbsfljahbfdlskjfhb}
  \frac{
    2\,C_{\tref{lem:d_dim_proj}}^2
    \log_2\!\left(
      \frac{c_{\tref{lem:dyaddecomp}} \, t}{8\,(\log k)^{3/2}}
    \right)
  }{
    \left(
      \frac{c_{\tref{lem:dyaddecomp}} \, t}{8\,(\log k)^{3/2}}
    \right)^2
  }\,
  N
  \;\le\;
  \frac{128\,C_{\tref{lem:d_dim_proj}}^2}{c_{\tref{lem:dyaddecomp}}^2}\;
  \frac{
    \log_2(t) \, (\log k)^3
  }{t^2}
  \,N.
\end{equation}
Similarly, the size
of the set
\[
  \bigg\{
    j \in [N] :
    \|\Proj_E(X_j)\|_2
    \ge
    \dfrac{c_{\tref{lem:dyaddecomp}} \, t \, \sqrt{\dim E}}
    {\sqrt{\log k}}
  \bigg\}
\]
is bounded above by \eqref{eq:ljhablsjhbsfljahbfdlskjfhb}.
Consequently,
\[
  |J|
  \le (\log_2 k + 1) \frac{128\,C_{\tref{lem:d_dim_proj}}^2}{c_{\tref{lem:dyaddecomp}}^2}\;
  \frac{
    \log_2(t) \, (\log k)^3
  }{t^2}
  \,N,
\]
implying the first part of the lemma.
Finally, part (2) follows directly from Lemma~\ref{lem:d_dim_proj} upon choosing $\tilde{C}_{\tref{lem:dotprodstat}}$ sufficiently large.
\end{proof}

\section{Polytopes $P_{N,n}$}

Having considered preparatory statements dealing with statistical properties
of Gaussian vector tuples and random sign combinations in Preliminaries,
here we delve into the study of the random polytopes $P_{N,n}$;
the main outcome of this section being the proof of Theorem~B from the introduction.

\begin{definition}\label{def: beta 09812720894}
For every vector $y$ in $\R^n$,
let $\beta(y)$ be a vector in $\R^N$
such that
$$
y=\sum\limits_{j=1}^N \beta_j(y)\,X_j,
$$
and
$$
\|\beta(y)\|_1=\|y\|_{P_{N,n}}.
$$
The vector $\beta(y)$ may be not uniquely defined;
in that case we fix a realization of $\beta(y)$
satisfying the above conditions, in such a way that the mapping
$y\to \beta(y)$ is Borel measurable.
\end{definition}

The next definition provides a notational
basis for multiscale arguments applied
to coefficient vectors $\beta(y)$:

\begin{definition}\label{def: myr 334q}
Let $y$ be a vector in $\R^n$, and let $\delta\in(0,1]$
be a parameter.
We define
$$
m_\delta(y,r):=\Big|\Big\{
j\leq N:\;\big|\beta_j(y)\big|\in
\frac{\delta}{n}\big(2^{r-1},2^r\big]\Big\}\Big|,
\quad r\geq 1,
$$
and
$$
m_\delta(y,0):=\Big|\Big\{
j\leq N:\;\big|\beta_j(y)\big|\leq 
\frac{\delta}{n}\Big\}\Big|.
$$
\end{definition}

\subsection{In-radius and structure of the kernel of $A^\top$}

\begin{lemma}[In-radius estimates]\label{PN:inradius}
For every $K'\geq K>1$ there are constants
$C_{\text{\tiny\ref{PN:inradius}}},c_{\text{\tiny\ref{PN:inradius}}}>0$
depending only on $K,K'$
with the following property. 
Assume $n\geq c_{\text{\tiny\ref{lem:sparse_singular_values}}}^{-1}$ and $K'\geq N/n\geq K$.
Condition on any realization of $X_1,\dots,X_N$
from event $\Omega_{\text{\tiny\ref{lem:sparse_singular_values}}}$
(defined in Lemma~\ref{lem:sparse_singular_values}).
Then
\begin{itemize}
    \item The in-radius $r(P_{N,n})$
    satisfies $c_{\text{\tiny\ref{PN:inradius}}}\leq r(P_{N,n})
    \leq C_{\text{\tiny\ref{PN:inradius}}}$;
    \item For every non-empty $I\subset[N]$ with $|I|\leq c_{\tref{lem:sparse_singular_values}}\,n$,
    the in-radius of
    $\conv\,\big\{\pm X_j,\quad j\in I\big\}$
    (viewed as a non-degenerate polytope in ${\rm span}\{X_j\,:j \in I\}$)
    satisfies
    $$
    \frac{1}{2}\sqrt{\frac{n}{|I|}}
    \leq 
    r\big(\conv\,\big\{\pm X_j,\quad j\in I\big\}\big)
    \leq 
    2\sqrt{\frac{n}{|I|}}.
    $$
\end{itemize}
\end{lemma}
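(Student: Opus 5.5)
The plan is to use polar duality: $B_2^n(r)\subset P$ if and only if $h_P(\theta)\geq r$ for every unit $\theta$. For $P=P_{N,n}$ the support function is $h_P(\theta)=\max_j|\langle X_j,\theta\rangle|=\|A^\top\theta\|_\infty$. So the in-radius equals $\min_{\theta\in S^{n-1}}\|A^\top\theta\|_\infty$. In the same way, for $Q_I=\conv\{\pm X_j,\ j\in I\}$ inside $H_I={\rm span}\{X_j: j\in I\}$, the in-radius equals $\min_{\theta\in S^{n-1}\cap H_I}\|A_I^\top\theta\|_\infty$. Both bullets therefore reduce to two-sided bounds on $\ell_\infty$-norms of $A^\top\theta$ and $A_I^\top\theta$, which we derive from the singular value information in $\Omega_{\text{\tiny\ref{lem:sparse_singular_values}}}$.

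\textbf{Second bullet.} Fix $\theta\in S^{n-1}\cap H_I$. Since $\theta$ lies in the column space of $A_I$, we have $\|A_I^\top\theta\|_2\ge s_{\min}(A_I)$. Hence
$$
\|A_I^\top\theta\|_\infty\ge \frac{s_{\min}(A_I)}{\sqrt{|I|}}\ge \frac{\sqrt n\,\big(1-\sqrt{|I|/n}\,\log(n/|I|)\big)}{\sqrt{|I|}}.
$$
The function $x\mapsto\sqrt{x}\log(1/x)$ is small for $x\le c_{\text{\tiny\ref{lem:sparse_singular_values}}}\le 1/100$ (about $0.46$ at $x=1/100$). So the bracket is at least $1/2$, which gives the lower bound $\frac12\sqrt{n/|I|}$. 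For the upper bound we choose $\theta$ in $H_I$ where the $\ell_\infty$ norm is small. Since $A_I$ has full column rank, we can solve $A_I^\top\theta_0=\mathbf 1$ (the all-ones vector in $\R^I$) with $\theta_0\in H_I$. Then $\|\theta_0\|_2\ge \|\mathbf 1\|_2/s_{\max}(A_I)=\sqrt{|I|}/s_{\max}(A_I)$. Setting $\theta=\theta_0/\|\theta_0\|_2$ gives
$$
\|A_I^\top\theta\|_\infty\le \frac{s_{\max}(A_I)}{\sqrt{|I|}}\le 2\sqrt{n/|I|},
$$
using the same bracket bound.

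\textbf{First bullet, lower bound.} For every unit $\theta$ we have $\|A^\top\theta\|_\infty\ge \|A^\top\theta\|_2/\sqrt N\ge \frac{\sqrt N-\sqrt n}{2\sqrt N}=\frac12\big(1-K^{-1/2}\big)$. This gives $c_{\text{\tiny\ref{PN:inradius}}}$ depending only on $K$.

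\textbf{First bullet, upper bound.} This is the only step that is not a one-liner. Here we need a direction $\theta$ with $\max_j|\langle X_j,\theta\rangle|=O(1)$, and a random direction gives about $\sqrt{\log N}$, so some cleverness is required. I would use volume, as the outline suggests. If $B_2^n(r)\subset P$, then $P^\circ\subset B_2^n(1/r)$. Now $P^\circ=\{\theta:\|A^\top\theta\|_\infty\le1\}\supseteq\{\theta:\|A^\top\theta\|_2\le1\}$, which is the ellipsoid $(AA^\top)^{-1/2}$-image of $B_2^n$. Its volume is at least $\|A\|^{-n}|B_2^n|\ge(4\sqrt N)^{-n}|B_2^n|$. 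That bound alone is too weak, so instead I would bound $|P|$ from above and apply Blaschke--Santal\'o in reverse (or simply use $|P|\ge r^n|B_2^n|$). The polytope $P=A(B_1^N)$ has volume at most $\binom{N}{n}\cdot\max_{|S|=n}|A_S(B_1^n)|$, since $P$ is covered by the images of the $n$-faces. Each such image has volume at most $\frac{2^n}{n!}|\det A_S|\le\frac{2^n}{n!}\prod_{j\in S}\|X_j\|_2$ by Hadamard. On the event we have $\|X_j\|_2\le s_{\max}(A_{\{j\}})\le 2\sqrt n$. Hence
$$
|P|\le \binom{N}{n}\frac{(4\sqrt n)^n}{n!}\le \Big(\frac{eK'}{1}\Big)^n\frac{(4\sqrt n)^n}{n!}\le (C K')^n\, n^{-n/2},
$$
while $|B_2^n|\ge (c/\sqrt n)^n$. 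Comparing with $r^n|B_2^n|\le|P|$ yields $r\le C(K')$. The main care is in the bookkeeping of $n!$ against $n^{n/2}$ and the binomial against $K'$, but the exponential rates are all $n$-th powers of constants, so this should close cleanly.
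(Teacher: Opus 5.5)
Your proof is correct. The lower bound in the first bullet is the paper's argument. Your treatment of the second bullet is the support-function (dual) form of the paper's: the paper uses $\frac{1}{\sqrt m}A_I(B_2^m)\subset A_I(B_1^m)=P_I$ and the boundary point $A_Iz$ with $z=(\pm\frac1m,\dots,\pm\frac1m)$, while you bound $\min_{\theta\in S^{n-1}\cap H_I}\|A_I^\top\theta\|_\infty$ from both sides.

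The real difference is the upper bound $r(P_{N,n})\le C$. The paper works on the polar side. It lower-bounds $|\sqrt n\,P_{N,n}^\circ|\ge\gamma(\sqrt n\,P_{N,n}^\circ)\ge c_0^N$ by applying the \v{S}id\'ak--Khatri inequality to the $N$ slabs $\{y:|\langle X_j/\sqrt n,y\rangle|\le 1\}$. It then converts this into an upper bound on $|P_{N,n}|$ via Blaschke--Santal\'o, obtaining $r(P_{N,n})\le C c_0^{-K'}$. You instead bound $|P_{N,n}|$ directly: you cover it by the $\binom{N}{n}$ sets $A_S(B_1^S)$ and estimate each volume with Hadamard's inequality. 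This avoids both Santal\'o and the Gaussian correlation-type inequality, and it gives the better constant $r(P_{N,n})\le CK'$ (linear rather than exponential in $K'$). On the other hand, the paper's polar-volume route can be pushed, by optimizing the width of the slabs, to the correct order $\sqrt{\log(N/n)}$, whereas the covering bound is only linear in $N/n$ because the covering is highly redundant. For the paper only dependence on $K,K'$ matters, so either route suffices.

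Two small presentational fixes. First, the sets $A_S(B_1^S)$ are images of coordinate sections of $B_1^N$, not of its faces. So state the covering $P_{N,n}\subset\bigcup_{|S|=n}A_S(B_1^S)$ with its one-line justification: every $y\in P_{N,n}$ equals $A\beta$ with $\|\beta\|_1\le 1$ and $|{\rm supp}\,\beta|\le n$, e.g.\ a basic optimal solution of $\min\{\|\beta\|_1:\,A\beta=y\}$. Second, drop the remark about applying Blaschke--Santal\'o ``in reverse''; you only use $r^n|B_2^n|\le|P_{N,n}|$.
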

\begin{proof}

For a symmetric convex body $L \subset \mathbb{R}^n$, its in-radius can be expressed as
\[
r(L) = \sup\{r > 0 : r B_2^n \subset L\} 
= \min_{y \in S^{n-1}} h_L(y),
\]
where $h_L(y) = \sup_{x \in L} \langle x, y \rangle$ is the support function.
For $P_{N,n} = \mathrm{conv}\{\pm X_j,\;1\leq j\leq N\}$, we thus have
\[
r(P_{N,n}) = \min_{y \in S^{n-1}} \max_{j \le N} |\langle X_j, y \rangle|.
\]

\step{Lower bound on in-radius}
For any $y \in \mathbb{R}^n$,
\[
\max_{j \le N} |\langle X_j, y \rangle|
= \|A^\top y\|_\infty
\ge \frac{1}{\sqrt{N}} \|A^\top y\|_2
\ge \frac{s_{\min}(A^\top)}{\sqrt{N}} \|y\|_2,
\]
where, by our conditioning on
$\Omega_{\text{\tiny\ref{lem:sparse_singular_values}}}$,
\[
s_{\min}(A^\top) 
\ge \tfrac{1}{2}(\sqrt{N} - \sqrt{n})
\ge \tfrac{1}{2}(1 - 1/\sqrt{K}) \sqrt{N}
=: c_*(K) \sqrt{N}.
\]
Taking the minimum over $y \in S^{n-1}$ gives
\[
r(P_{N,n}) \ge c_*(K) =: c_{\text{\tiny\ref{PN:inradius}}}.
\]

\step{Upper bound on in-radius}
By the Blaschke--Santal\'o inequality for symmetric convex bodies,
\[
|P_{N,n}|\,|P_{N,n}^\circ| \le |B_2^n|^2,
\]
where $P_{N,n}^\circ$ is the {\it polar body} for the polytope $P_{N,n}$,
and where we use the vertical bars to denote the standard Lebesgue volume.
Since $r(P_{N,n})^n |B_2^n| \le |P_{N,n}|$, we obtain
\begin{equation}\label{eq:jhbfhpfjnlkjnfalsjkfb}
r(P_{N,n}) \le \frac{|B_2^n|^{1/n}}{|P_{N,n}^\circ|^{1/n}}
\le  \frac{C}{|\sqrt{n}\, P_{N,n}^\circ|^{1/n}},    
\end{equation}
for some universal
constant $C > 0$, since $|B_2^n|^{1/n} \asymp n^{-1/2}$. Now, by definition,
\[
P_{N,n}^\circ = \{y \in \mathbb{R}^n : |\langle X_j, y \rangle| \le 1, \, j \in [N]\}.
\]
Let $\gamma$ denote the
standard Gaussian measure in $\mathbb{R}^n$.
Since the corresponding Gaussian density can be (roughly)
bounded above by $1$
everywhere in $\R^n$,
\[
|\sqrt{n}\, P_{N,n}^\circ|
\ge \gamma(\sqrt{n}\, P_{N,n}^\circ)
= \gamma\!\left(\bigcap_{j=1}^N 
\left\{y : \Big|\Big\langle \frac{X_j}{\sqrt{n}}, y \Big\rangle\Big| \le 1\right\}\right).
\]
Applying the Sidak--Khatri lemma 
\cite{Khatri,Sidak},
\[
\gamma\!\left(\bigcap_{j=1}^N 
\left\{y : \Big|\Big\langle \frac{X_j}{\sqrt{n}}, y \Big\rangle\Big| \le 1\right\}\right)
\ge \prod_{j=1}^N \gamma\!\left(
\Bigl\{y : \Big|\Big\langle \frac{X_j}{\sqrt{n}}, y \Big\rangle\Big| \le 1\Bigr\}
\right).
\]
Note that, by conditioning on 
$\Omega_{\text{\tiny\ref{lem:sparse_singular_values}}}$,
we have $\max_j \|X_j\|_2 \le 2\sqrt{n}$. 
Thus, 
the Gaussian measure of each slab 
$$
\Bigl\{y : \Big|\Big\langle \frac{X_j}{\sqrt{n}}, y \Big\rangle\Big| \le 1\Bigr\}
$$
is bounded below by a universal constant $c_0>0$.
Hence, 
$|\sqrt{n} P_{N,n}^\circ| \ge c_0^N$,
and in view of \eqref{eq:jhbfhpfjnlkjnfalsjkfb} we get
\[
r(P_{N,n}) \le \frac{C}{c_0^{N/n}} \le \frac{C}{c_0^{K'}} 
=: C_{\text{\tiny\ref{PN:inradius}}}.
\]

\step{In-radius of $\conv\,\big\{\pm X_j,\quad j\in I\big\}$}
Assume that $I \subset [N]$ is a non-empty subset with $|I| =: m \le c_{\tref{lem:sparse_singular_values}}\,n$.
In view of conditioning on 
$\Omega_{\text{\tiny\ref{lem:sparse_singular_values}}}$,
the $n\times I$ submatrix $A_I$ of $A$ satisfies
\begin{equation}\label{eq:sparse-sv}
\frac{1}{2} \sqrt{n} \|z\|_2 
\le \|A_I z\|_2 
\le 2 \sqrt{n} \|z\|_2,
\quad z \in \mathbb{R}^I.
\end{equation}
Denote $P_I:=\conv\,\big\{\pm X_j,\quad j\in I\big\}$.
Since $A_I(B_1^m) = P_I$, we have
\[
\frac{1}{\sqrt{m}} A_I(B_2^m) 
\subset
A_I(B_1^m)= P_I,
\]
in particular, implying that the in-radius of $P_I$
is bounded below by the in-radius of $\frac{1}{\sqrt{m}} A_I(B_2^m)$.
The latter, in turn, is bounded below using~\eqref{eq:sparse-sv}:
$$
r\Big(
\frac{1}{\sqrt{m}} A_I(B_2^m)
\Big)
= \min\limits_{z\in S^{m-1}}  \|A_Iz\|_2
\geq \frac{1}{2} \sqrt{\frac{n}{m}}.
$$
For an upper bound on the in-radius, take $z = (\pm \frac{1}{m}, \dots, \pm \frac{1}{m}) \in B_2^m$, so $\|z\|_1 = 1$ and $\|A_I z\|_{P_I} = 1$. Then from~\eqref{eq:sparse-sv},
\[
\Big\|\frac{1}{\sqrt{m}} A_I z\Big\|_2 
\le 2 \sqrt{\frac{n}{m}},
\]
so some boundary point of $P_I$ has Euclidean norm at most
$2 \sqrt{\frac{n}{m}}$.
\end{proof}

\begin{lemma}[Zero combinations of $X_i$'s are incompressible]\label{PN:incompcomb}
For every $K'\geq K>1$ there are constants
$\delta_{\text{\tiny\ref{PN:incompcomb}}},
\rho_{\text{\tiny\ref{PN:incompcomb}}}\in(0,1)$
depending only on $K,K'$
with the following property.
Assume $n\geq c_{\text{\tiny\ref{lem:sparse_singular_values}}}^{-1}$ and
$K'\geq N/n\geq K$.
Then, conditioned on event
$
\Omega_{\text{\tiny\ref{lem:sparse_singular_values}}}
$
from Lemma~\ref{lem:sparse_singular_values},
for every choice of a unit vector $\beta=(\beta_i)_{i\in[N]}$
such that
$$
\sum_{i\in[N]}\beta_i\,X_i=0,
$$
the vector $\beta$ is $(\delta_{\text{\tiny\ref{PN:incompcomb}}},
\rho_{\text{\tiny\ref{PN:incompcomb}}})$--incompressible.
\end{lemma}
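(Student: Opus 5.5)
The plan is to argue by contradiction, using only the deterministic singular-value bounds that hold on $\Omega_{\text{\tiny\ref{lem:sparse_singular_values}}}$. Suppose $\beta$ is a unit vector with $A\beta=0$ that is $(\delta,\rho)$--compressible. Then there is a set $J\subset[N]$ with $|J|\le \delta N$ and $\|\beta_{J^c}\|_2\le\rho$; take $J$ to be the support of the nearest $\delta N$--sparse vector. Consequently $\|\beta_J\|_2\ge 1-\rho$. Since $A\beta=0$, we have
$$A_J\beta_J=-A_{J^c}\beta_{J^c}.$$
The proof consists of bounding the two sides of this identity in opposite directions.

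For the right-hand side, the first event in $\Omega_{\text{\tiny\ref{lem:sparse_singular_values}}}$ gives
$$\|A_{J^c}\beta_{J^c}\|_2\le\|A\|\,\|\beta_{J^c}\|_2\le 4\sqrt{N}\rho\le 4\sqrt{K'n}\,\rho.$$
For the left-hand side, first choose $\delta$ so small that $\delta N\le \delta K' n\le c_{\text{\tiny\ref{lem:sparse_singular_values}}}\,n$. Then the sparse singular-value bound in $\Omega_{\text{\tiny\ref{lem:sparse_singular_values}}}$ applies to $J$. Shrinking $\delta$ further, we also get
$$\sqrt{|J|/n}\,\log(n/|J|)\le 1/2,$$
using that $x\log(1/x)\to0$ as $x\to0$; this requires $\delta$ small in terms of $K'$ only. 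Hence $s_{\min}(A_J)\ge \sqrt n/2$, and
$$\|A_J\beta_J\|_2\ge \tfrac12\sqrt n\,(1-\rho).$$

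Combining the two bounds gives
$$\tfrac12(1-\rho)\le 4\sqrt{K'}\,\rho,$$
which fails once $\rho<1/(1+8\sqrt{K'})$. Fix, for instance, $\rho_{\text{\tiny\ref{PN:incompcomb}}}=1/(2+16\sqrt{K'})$, together with the small $\delta_{\text{\tiny\ref{PN:incompcomb}}}$ chosen above. With these choices no compressible unit vector can lie in $\ker A$, which is the assertion of the lemma.

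The only mildly delicate point is the edge case where $J$ is empty or very small, so that the sparse-column bound is formally stated only for non-empty $J$. If $J=\emptyset$, then $\|\beta\|_2=\|\beta_{J^c}\|_2\le\rho<1$, which contradicts $\|\beta\|_2=1$ directly. If $|J|\ge1$, the monotonicity of $x\log(1/x)$ on small $x$ handles all sizes up to $\delta N$ uniformly. Beyond this, the argument is a routine norm comparison, and I expect no real obstacle; the lemma is essentially deterministic on $\Omega_{\text{\tiny\ref{lem:sparse_singular_values}}}$, and the constants depend on $K'$ only, which is certainly allowed.
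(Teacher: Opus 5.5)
Your proof is correct and is essentially the paper's argument. Both use $A_J\beta_J=-A_{J^c}\beta_{J^c}$ to play the sparse singular-value lower bound $\|A_J\beta_J\|_2\geq s_{\min}(A_J)\,(1-\rho)$ against $\|A_{J^c}\beta_{J^c}\|_2\leq 4\sqrt{N}\,\rho$; the paper merely takes $J$ to be the $\lfloor\delta N\rfloor$ largest coordinates and uses the constants $1/4$ and $\rho=1/(9\sqrt{K'})$. One cosmetic slip, which does not affect the argument: the function whose smallness and monotonicity near $0$ you need is $\sqrt{x}\log(1/x)$ (increasing on $(0,e^{-2})$), not $x\log(1/x)$.
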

\begin{proof}
By choosing $\delta_{\tref{PN:incompcomb}}$
sufficiently small, we can assume that
$$
\delta_{\tref{PN:incompcomb}} N \le c_{\tref{lem:sparse_singular_values}} n,
$$
and that
$$
\sqrt{{\delta_{\tref{PN:incompcomb}} N}/{n}}\,\log\frac{n}{\delta_{\tref{PN:incompcomb}} N}\leq \frac{1}{4}.
$$
Further, let $\rho_{\text{\tiny\ref{PN:incompcomb}}}:=\frac{1}{9\sqrt{K'}}$.
Let \( J \subset [N] \) be the subset of indices corresponding to 
\( \lfloor\delta_{\tref{PN:incompcomb}} N\rfloor \) largest
(in absolute value) coordinates of \( \beta \),
and denote by $\beta_J$ the coordinate projection of $\beta$
onto $\R^J$.
If $\|\beta_{J^c}\|_2:=\|\beta-\beta_J\|_2>\rho_{\text{\tiny\ref{PN:incompcomb}}}$
then there is nothing to prove.
Otherwise, 
by the definition of $\Omega_{\text{\tiny\ref{lem:sparse_singular_values}}}$, we have
\[
\|A_J \beta_J\|_2 \ge
\frac{3}{4}\sqrt{n}\,\|\beta_J\|_2
\geq 
\frac{3}{4}\sqrt{n}\,(1-\rho_{\text{\tiny\ref{PN:incompcomb}}})
\geq
\frac{1}{2} \sqrt{n}.
\]
On the other hand,
\[
\|A_{J^c} \beta_{J^c}\|_2 
\le \|A\| \, \|\beta_{J^c}\|_2 
\le 4\sqrt{N} \, \|\beta_{J^c}\|_2.
\]
Since \( A_J \beta_J + A_{J^c} \beta_{J^c} = 0 \), it follows that
\[
\|A_J \beta_J\|_2 = \|A_{J^c} \beta_{J^c}\|_2.
\]
Combining the above inequalities yields
\[
\frac{1}{2}\sqrt{n} \le 4\sqrt{N} \, \|\beta_{J^c}\|_2
\leq 4\sqrt{N}\,\rho_{\text{\tiny\ref{PN:incompcomb}}}
=4\sqrt{N}\,\frac{1}{9\sqrt{K'}}<\frac{1}{2}\sqrt{\frac{N}{K'}},
\]
leading to contradiction since $N\leq K'n$. The result follows.
\end{proof}

\subsection{Spans
of vectors of comparable
$\|\cdot\|_{P_{N,n}}$ and $\|\cdot\|_2$--norms}

The purpose of this subsection
is to study properties of sign combinations
$\sum_{i=1}^k \sigma_i y_i$ of vectors
$y_1,y_2,\dots,y_k$ of unit Euclidean norm
and of nearly constant $\|\cdot\|_{P_{N,n}}$--norm.
The main statement of the
subsection---Proposition~\ref{prop:spansofcomp}---asserts that with high probability any such configuration of $y_i$'s {\it cannot}
generate a near--$\ell_\infty$ subspace of $(\R^n,\|\cdot\|_{P_{N,n}})$.

\begin{definition}[$\beta^\sigma$]\label{def: betasigma 15262}
Let ${\bf F} = (y_i)_{i \in [k]}$ be a collection of arbitrary vectors
in $\R^n$.
For every $\sigma$ in $\{-1,1\}^k$, let 
$\beta^\sigma({\bf F}):=\beta\big(\sum_{i=1}^k \sigma_i y_i\big)$
(cf. Definition~\ref{def: beta 09812720894})
be a vector such that
$$
\sum_{i=1}^k \sigma_i y_i
=\sum_{j\in[N]}\beta_j^\sigma X_j,
$$
and
$$
\Big\|\sum_{i=1}^k \sigma_i y_i\Big\|_{P_{N,n}}
=\|\beta^\sigma({\bf F})\|_1.
$$
We will write $\beta^\sigma$ in place of $\beta^\sigma({\bf F})$
whenever the collection ${\bf F}$ is clear from context.
\end{definition}

\begin{definition}[$T^\sigma(\tau)$]\label{def: Tsigmatau}
Let ${\bf F} = (y_i)_{i \in [k]}$ be a collection of arbitrary vectors
in $\R^n$.
For each $\tau>0$ and $\sigma$ in $\{-1,1\}^k$, define
$$
T^\sigma(\tau, {\bf F}):=\big\{j\leq N:\;|\beta_j^\sigma({\bf F})|\geq \tau\big\}.
$$
We will write $T^\sigma(\tau)$
whenever $y_1,\dots,y_k$ are clear from the context.
The sets $T^\sigma(\tau, {\bf F})$ are used to keep track of magnitudes
of components of the coefficient vectors $\beta^\sigma({\bf F})$.
\end{definition}

\begin{definition}\label{def: Jt}
Let $2\leq k\leq n/2$, let $t\geq
\tilde C_{\text{\tiny\ref{lem:dotprodstat}}}
\log^{2} k$,
and let ${\bf F} = (y_i)_{i \in [k]}$ be a tuple
of unit vectors in $\R^n$.
Define $J(t, {\bf F}) \subseteq [N]$
as the collection of indices $j\in[N]$ satisfying {\bf at least
one} of the following two conditions: 
 \begin{enumerate}
    \item There are at least $\frac{2^k}{k^{100}}$ indices $\sigma \in \{\pm1\}^{k}$ such that $ \big|\big\langle X_j,\sum_{i=1}^k \sigma_i y_i\big\rangle\big|  \ge  t\,\sqrt{k}$; 
    \item There is an
    index $\sigma \in \{\pm1\}^{k}$
    such that $\big|\big\langle X_j,\sum_{i=1}^k \sigma_i y_i\big\rangle\big|
    \ge \tilde C_{\text{\tiny\ref{lem:dotprodstat}}}\,k^{11}$. 
\end{enumerate}
We will write $J(t)$ whenever
${\bf F}$ is clear from the context.
\end{definition}

\begin{remark}\label{rem: Jtsize}
The set $J(t, {\bf F})$ should be viewed as a set of indices corresponding to ``irregular''
(rare) realizations of $X_j$'s.
Indeed, in view of the definition of the event
$\Omega_{\text{\tiny\ref{lem:dotprodstat}}}$ from Lemma~\ref{lem:dotprodstat},
everywhere on $\Omega_{\text{\tiny\ref{lem:dotprodstat}}}$ we have
$$
|J(t, {\bf F})|\leq 
n\,\frac{\tilde C_{\text{\tiny\ref{lem:dotprodstat}}}\log t\;
\log^{4} k }{t^2}
+n\,k^{-10}
$$
for every $k$--tuple ${\bf F}$ of unit vectors in $\R^n$.
\end{remark}

\begin{lemma}[An approximation lemma]\label{lem:approxlemma}
Assume $n\geq c_{\text{\tiny\ref{lem:sparse_singular_values}}}^{-1}$ and
$K'\geq N/n\geq K>1$.
Let $\beta=(\beta_j)_{j\in [N]}$
be a non-random vector, and let
$$
Z:=\sum_{j\in[N]}\beta_j\,X_j.
$$
Then everywhere on event
$\Omega_{\text{\tiny\ref{lem:sparse_singular_values}}}$ from Lemma~\ref{lem:sparse_singular_values},
for every non-empty subset $T\subset[N]$ with
$|T|\leq c_{\text{\tiny\ref{lem:sparse_singular_values}}}\,n$,
$$
\big\|\big(\langle X_\ell, Z \rangle\big)_{\ell \in T }
-n\,\beta_T
\big\|_2
\leq
  O
  \bigg(n\,\|\beta_T\|_2\,\log\Big(\frac{n}{|T|}\Big) \sqrt{\frac{|T|}{n}}\bigg)
  +16 N \|\beta _{T^c}\|_2,
$$
where the implicit constant in $O(\dots)$ may only depend on $K',K$.
\end{lemma}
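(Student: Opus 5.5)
Split $Z$ along $T$ and its complement and read off the $T$--coordinates of $A^\top Z$. Write $Z=A_T\beta_T+A_{T^c}\beta_{T^c}$. Then
$$
\big(\langle X_\ell,Z\rangle\big)_{\ell\in T}=A_T^\top Z=A_T^\top A_T\beta_T+A_T^\top A_{T^c}\beta_{T^c},
$$
so the quantity to be bounded is at most
$$
\big\|(A_T^\top A_T-n\,\Id)\beta_T\big\|_2+\big\|A_T^\top A_{T^c}\beta_{T^c}\big\|_2 .
$$
We treat the two terms separately, working everywhere on $\Omega_{\text{\tiny\ref{lem:sparse_singular_values}}}$.

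\textbf{Diagonal term.} Since $|T|\le c_{\text{\tiny\ref{lem:sparse_singular_values}}}n$, the definition of $\Omega_{\text{\tiny\ref{lem:sparse_singular_values}}}$ places all singular values of $A_T$ in $\sqrt n(1\pm\eta)$, where
$$
\eta:=\sqrt{|T|/n}\,\log(n/|T|).
$$
The matrix $A_T^\top A_T$ is symmetric, and its eigenvalues are the squares of these singular values, hence lie in $n[(1-\eta)^2,(1+\eta)^2]$. Therefore
$$
\|A_T^\top A_T-n\,\Id\|\le n\,(2\eta+\eta^2)\le 3n\eta ,
$$
using $\eta\le 1$. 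This holds because $c_{\text{\tiny\ref{lem:sparse_singular_values}}}\le 1/100$ makes $\eta$ small; if necessary one simply notes that the function $x\log(1/x^2)$ is bounded on $(0,1/10]$. Applying the operator bound to $\beta_T$ gives the first term of the claimed estimate, $O\big(n\|\beta_T\|_2\log(n/|T|)\sqrt{|T|/n}\big)$.

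\textbf{Cross term.} Here we use only the global bound $\|A\|\le 4\sqrt N$ on $\Omega_{\text{\tiny\ref{lem:sparse_singular_values}}}$. Both $A_T$ and $A_{T^c}$ are column submatrices of $A$, so their operator norms are at most $4\sqrt N$, and
$$
\|A_T^\top A_{T^c}\beta_{T^c}\|_2\le\|A_T\|\,\|A_{T^c}\|\,\|\beta_{T^c}\|_2\le 16N\|\beta_{T^c}\|_2 .
$$
This is exactly the second term in the statement.

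Adding the two bounds completes the argument. No step is a real obstacle; the only care needed is in the diagonal term, to check that the quadratic correction $\eta^2$ is absorbed into the $O(\cdot)$. The implicit constant is in fact absolute, so it trivially depends only on $K,K'$.
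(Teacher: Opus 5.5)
Your proposal is correct and follows essentially the same route as the paper: split $Z=A_T\beta_T+A_{T^c}\beta_{T^c}$, bound the diagonal term through the eigenvalues of $A_T^\top A_T$ given by the sparse singular value bounds on $\Omega_{\text{\tiny\ref{lem:sparse_singular_values}}}$, and bound the cross term by $\|A\|^2\|\beta_{T^c}\|_2\le 16N\|\beta_{T^c}\|_2$. Your extra check that $\eta=\sqrt{|T|/n}\,\log(n/|T|)\le 1$ for $|T|\le n/100$, so that the $\eta^2$ correction is absorbed, is a detail the paper leaves implicit.
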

\begin{proof}
Consider 
\begin{align*}
    (A_T)^\top Z
= \big(\langle X_\ell, Z \rangle\big)_{\ell \in T }\,.
\end{align*}
We express the vector $Z$ as a sum: 
$$Z = A \beta  = A_{T } \beta _{T } + A_{T ^c} \beta_{T^c}.$$
The eigenvalues of the positive semi-definite matrix 
$(A_T)^\top A_{T }$ are squares of the singular values
of $A_T$. Hence, applying the definition of the event
$\Omega_{\text{\tiny\ref{lem:sparse_singular_values}}}$,
we obtain that within the event,
\begin{align*}
  \|(A_T)^\top A_{T } - n\,\Id\|
=&
  O
  \bigg(n\,\log\Big(\frac{n}{|T|}\Big) \sqrt{\frac{|T|}{n}}\bigg),
\end{align*}
and hence
$$
\Big\| (A_T)^\top A_{T } \beta _{T } 
  - n \beta _{T }\Big\|_2=
  O
  \bigg(n\,\|\beta_T\|_2\,\log\Big(\frac{n}{|T|}\Big) \sqrt{\frac{|T|}{n}}\bigg).
$$
On the other hand,
\begin{align*}
  \Big\|
  (A_T)^\top A_{T^c} \beta _{T^c}\Big\|
\le 
  \|A\|^2 \|\beta _{T^c}\|_2
\leq 16 N \|\beta _{T^c}\|_2.
\end{align*}
Thus, everywhere on $\Omega_{\text{\tiny\ref{lem:sparse_singular_values}}}$,
$$
\big\|(A_T)^\top Z
-n\,\beta_T
\big\|_2
\leq
  O
  \bigg(n\,\|\beta_T\|_2\,\log\Big(\frac{n}{|T|}\Big) \sqrt{\frac{|T|}{n}}\bigg)
  +16 N \|\beta _{T^c}\|_2.
$$
\end{proof}

\begin{corollary}\label{cor:approxcor}
Assume $n\geq c_{\text{\tiny\ref{lem:sparse_singular_values}}}^{-1}$
and
$K'\geq N/n\geq K>1$.
Everywhere on the event
$\Omega_{\text{\tiny\ref{lem:sparse_singular_values}}}$,
the following holds.
Let
$1\leq k\leq n/2$, and let
$y_1,y_2,\dots,y_k$ be a $k$--tuple of unit vectors in $\R^n$.
Then for all but $\frac{2^k}{k^{100}}$
choices of signs $\sigma\in\{-1,1\}^k$,
and for any $\tau\geq \frac{1}{c_{\text{\tiny\ref{lem:sparse_singular_values}}}\,n}
\|\beta^\sigma\|_1$,
$$
\Big\|(A_{T^\sigma(\tau)})^\top \sum_{i=1}^k \sigma_i y_i 
-n\,\beta_{T^\sigma(\tau)}^\sigma
\Big\|_2
=
  O
  \bigg(\sqrt{k\log k}\,\log\Big(\frac{\tau\,n}{\|\beta^\sigma\|_1}\Big) \sqrt{\frac{\|\beta^\sigma\|_1}{\tau}}
  +n \sqrt{\tau\,\|\beta^\sigma\|_1}\bigg),
$$
where the implicit constant in $O(\dots)$ may only depend on $K',K$.
\end{corollary}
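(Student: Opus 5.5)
We apply Lemma~\ref{lem:approxlemma} with $\beta:=\beta^\sigma$, $Z:=\sum_{i}\sigma_iy_i=A\beta^\sigma$ and $T:=T^\sigma(\tau)$. Two quantities must be controlled: the size of $T^\sigma(\tau)$ together with $\|\beta^\sigma_T\|_2$, and the tail $\|\beta^\sigma_{T^c}\|_2$.

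\textbf{Step 1 (size of $T$ and the tail).} Since $|\beta^\sigma_j|\ge\tau$ on $T$ and $\|\beta^\sigma\|_1$ bounds the sum of all coordinates, $|T^\sigma(\tau)|\le \|\beta^\sigma\|_1/\tau\le c_{\tref{lem:sparse_singular_values}}n$. This is exactly where the hypothesis $\tau\ge \|\beta^\sigma\|_1/(c_{\tref{lem:sparse_singular_values}} n)$ enters, so Lemma~\ref{lem:approxlemma} applies. For the tail we use the $\ell_1$--$\ell_\infty$ interpolation $\|\beta^\sigma_{T^c}\|_2\le\sqrt{\|\beta^\sigma_{T^c}\|_\infty\|\beta^\sigma_{T^c}\|_1}\le\sqrt{\tau\|\beta^\sigma\|_1}$. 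Hence $16N\|\beta^\sigma_{T^c}\|_2\le 16K'n\sqrt{\tau\|\beta^\sigma\|_1}$, which is the second term of the claimed bound.

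\textbf{Step 2 (the $T$-part).} The first term from Lemma~\ref{lem:approxlemma} is $n\|\beta^\sigma_T\|_2\log(n/|T|)\sqrt{|T|/n}$. Rather than bounding $\|\beta^\sigma_T\|_2$ crudely, we use the smallest-singular-value bound on $A_T$ from $\Omega_{\tref{lem:sparse_singular_values}}$. Writing $A_T\beta^\sigma_T=Z-A_{T^c}\beta^\sigma_{T^c}$, we get
\[
\tfrac12\sqrt n\,\|\beta^\sigma_T\|_2\le \|Z\|_2+4\sqrt N\sqrt{\tau\|\beta^\sigma\|_1}.
\]
So we need $\|Z\|_2=O(\sqrt{k\log k})$ for all but $2^k/k^{100}$ signs. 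This holds because $\|\sum_i\sigma_iy_i\|_2^2$ is a Rademacher chaos with mean $k$: by Hanson--Wright, or by Khintchine applied to $\langle\theta,\sum\sigma_iy_i\rangle$ (as in the proof of Lemma~\ref{lem:dyaddecomp}), $\|Z\|_2\le C\sqrt{k\log k}$ except on a set of signs of probability at most $k^{-100}$. This is the only place where the exceptional set of signs arises, and it does not depend on $\tau$, which gives the required uniformity in $\tau$.

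\textbf{Step 3 (assembling the bound).} Substituting, the $Z$-contribution is
\[
O\Big(\sqrt n\cdot\sqrt{k\log k}\cdot\log(n/|T|)\sqrt{|T|/n}\Big)=O\Big(\sqrt{k\log k}\,\sqrt{|T|}\,\log(n/|T|)\Big),
\]
using $|T|\le\|\beta^\sigma\|_1/\tau$. The cross term is $O\big(n\sqrt{\tau\|\beta^\sigma\|_1}\,\log(n/|T|)\sqrt{|T|/n}\big)$, and it is absorbed into $n\sqrt{\tau\|\beta^\sigma\|_1}$ because $\sqrt{x}\log(1/x)$ is bounded for $x=|T|/n\le c_{\tref{lem:sparse_singular_values}}$.

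The main obstacle is the monotonicity bookkeeping in the logarithm. The function $m\mapsto\sqrt m\log(n/m)$ is increasing for $m\le n/e^2$, and $c_{\tref{lem:sparse_singular_values}}\le 1/100$ guarantees this range. So we may replace $|T|$ by its upper bound $\|\beta^\sigma\|_1/\tau$, producing $\log(\tau n/\|\beta^\sigma\|_1)\sqrt{\|\beta^\sigma\|_1/\tau}$. The degenerate case $T=\emptyset$ is trivial, since then the left-hand side vanishes.
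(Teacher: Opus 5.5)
Your proposal is correct and takes essentially the same route as the paper. You apply Lemma~\ref{lem:approxlemma} with $T=T^\sigma(\tau)$, bound the tail by $\sqrt{\tau\|\beta^\sigma\|_1}$, and control $\|\beta^\sigma_T\|_2$ through $A_T\beta^\sigma_T=A\beta^\sigma-A_{T^c}\beta^\sigma_{T^c}$ together with the $\tau$-independent bound $\|A\beta^\sigma\|_2=O(\sqrt{k\log k})$ outside an exceptional set of signs. Your explicit monotonicity check for $m\mapsto\sqrt{m}\log(n/m)$, and your direct absorption of the cross term via boundedness of $\sqrt{x}\log(1/x)$, make rigorous the substitution $|T|\to\|\beta^\sigma\|_1/\tau$ that the paper performs implicitly.
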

\begin{proof}
Note that for every choice of $\sigma\in\{-1,1\}^k$,
and for every $\tau\geq \frac{1}{c_{\text{\tiny\ref{lem:sparse_singular_values}}}\,n}
\|\beta^\sigma\|_1$,
the size of $T^\sigma(\tau)$ can be estimated as
$$
|T^\sigma(\tau)|\leq c_{\text{\tiny\ref{lem:sparse_singular_values}}}\,n.
$$
Applying Lemma~\ref{lem:approxlemma}, we obtain
$$
\Big\|(A_{T^\sigma(\tau)})^\top
\sum_{i=1}^k \sigma_i y_i 
-n\,\beta^\sigma_{T^\sigma(\tau)}
\Big\|_2
\leq
  O
  \bigg(n\,\|\beta^\sigma_{T^\sigma(\tau)}\|_2\,
  \log\Big(\frac{n}{|T^\sigma(\tau)|}\Big)
  \sqrt{\frac{|T^\sigma(\tau)|}{n}}\bigg)
  +16 N \|\beta^\sigma_{(T^\sigma(\tau))^c}\|_2.
$$
By Holder's inequality,
\begin{equation}\label{eq:jhblfjnpeirjnrepijnqwlifj}
\|\beta^\sigma_{(T^\sigma(\tau))^c}\|_2
\leq \sqrt{\tau\,\|\beta^\sigma\|_1}.    
\end{equation}
Further, since the matrix $A_{T^\sigma(\tau)}$
is near isometry by the definition of
$\Omega_{\text{\tiny\ref{lem:sparse_singular_values}}}$,
we have
$$
\|\beta^\sigma_{T^\sigma(\tau)}\|_2
\leq \frac{1}{\sqrt{n}}\;
O\big(\|A_{T^\sigma(\tau)}\,\beta^\sigma_{T^\sigma(\tau)}\|_2\big).
$$
We can write
$$
A_{T^\sigma(\tau)}\,\beta^\sigma_{T^\sigma(\tau)}
=
A \beta^\sigma
-
A_{(T^\sigma(\tau))^c}\,\beta^\sigma_{(T^\sigma(\tau))^c},
$$
where, by \eqref{eq:jhblfjnpeirjnrepijnqwlifj},
$$
\big\|
A_{(T^\sigma(\tau))^c}\,\beta^\sigma_{(T^\sigma(\tau))^c}
\big\|_2
=O(\sqrt{\tau\,n\,\|\beta^\sigma\|_1}).
$$
Further, the relation
$$
A \beta^\sigma
=\sum_{i=1}^k \sigma_i y_i,
$$
the assumption on the Euclidean
norms of $y_i$'s and Khintchine's inequality imply that the Euclidean norm
$\|A \beta^\sigma\|_2=O(\sqrt{k\log k})$
for all but $\frac{2^k}{k^{100}}$
choices of signs $\sigma$.
We conclude that for all such choices of signs,
$$
\|\beta^\sigma_{T^\sigma(\tau)}\|_2
\leq
\frac{1}{\sqrt{n}}\;
O\big(\|A \beta^\sigma\|_2
+\|A_{(T^\sigma(\tau))^c}\,\beta^\sigma_{(T^\sigma(\tau))^c}\|_2\big)
\leq O(\sqrt{\tau\,\|\beta^\sigma\|_1})
+O\Big(\sqrt{\frac{k\log k}{n}}\Big).
$$
Combined with the estimate
$$
|T^\sigma(\tau)|
\leq \frac{\|\beta^\sigma\|_1}{\tau},
$$
it implies
\begin{align*}
n\,\|\beta^\sigma_{T^\sigma(\tau)}\|_2\,
  \log\Big(\frac{n}{|T^\sigma(\tau)|}\Big)
  \sqrt{\frac{|T^\sigma(\tau)|}{n}}
&=O\bigg(
n\,\|\beta^\sigma_{T^\sigma(\tau)}\|_2\,
  \log\Big(\frac{\tau\,n}{\|\beta^\sigma\|_1}\Big)
  \sqrt{\frac{\|\beta^\sigma\|_1}{\tau\,n}}
\bigg)\\
&=
O\bigg(
\sqrt{k\log k}\,
  \log\Big(\frac{\tau\,n}{\|\beta^\sigma\|_1}\Big)
  \sqrt{\frac{\|\beta^\sigma\|_1}{\tau}}
\bigg)\\
&+
O\bigg(
\sqrt{n}\,\|\beta^\sigma\|_1
  \log\Big(\frac{\tau\,n}{\|\beta^\sigma\|_1}\Big)
\bigg).
\end{align*}
Using that $\|\beta^\sigma\|_1\leq
c_{\text{\tiny\ref{lem:sparse_singular_values}}}\,n\,\tau$,
we can estimate the second term by
$$
O\big(
n\,\sqrt{\tau\,\|\beta^\sigma\|_1}\big).
$$
Finally,
$$
16 N \|\beta^\sigma_{(T^\sigma(\tau))^c}\|_2
=O\big(
n\,\sqrt{\tau\,\|\beta^\sigma\|_1}\big),
$$
and hence
$$
\Big\|(A_{T^\sigma(\tau)})^\top
\sum_{i=1}^k \sigma_i y_i 
-n\,\beta^\sigma_{T^\sigma(\tau)}
\Big\|_2
\leq
O\bigg(
\sqrt{k\log k}\,
  \log\Big(\frac{\tau\,n}{\|\beta^\sigma\|_1}\Big)
  \sqrt{\frac{\|\beta^\sigma\|_1}{\tau}}
\bigg)
+O\big(
n\,\sqrt{\tau\,\|\beta^\sigma\|_1}\big).
$$
\end{proof}

\begin{lemma}
\label{lem: Abetasigma}
Assume $n\geq c_{\text{\tiny\ref{lem:sparse_singular_values}}}^{-1}$
and
$K'\geq N/n\geq K>1$.
Within the event $\Omega_{\text{\tiny\ref{lem:sparse_singular_values}}}$,
for every $2\leq k\leq n/2$ and
for every choice of $k$--tuple of vectors $y_1,\dots,y_k$
of unit Euclidean length,
there is a subset  
  $ R \subset \{\pm1\}^{k}$
  with $|R| \ge 2^k -
  \frac{C_{\text{\tiny\ref{lem: Abetasigma}}}}{k^{78}}2^k$ 
  such that for every $\sigma \in R$ and
  for every $t\geq
\tilde C_{\text{\tiny\ref{lem:dotprodstat}}}
\log^{2} k$,
we have
  \begin{align*}
    \Big\|A_{ J^c(t)} \beta_{J^c(t)}^\sigma\Big\|_2 
    =  
    O\bigg(1
 +
 \frac{1}{t}\,\sqrt{k}
  + t^2\,\|\beta^\sigma\|_1\bigg),
  \end{align*}
  where the implicit constant depends only on $K,K'$.
\end{lemma}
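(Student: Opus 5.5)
We split $\beta^\sigma$ using the threshold sets of Definition~\ref{def: Tsigmatau}. Fix a threshold $\tau:=C\,t^2\|\beta^\sigma\|_1/n$, with $C$ large enough that $\tau\geq \|\beta^\sigma\|_1/(c_{\tref{lem:sparse_singular_values}}n)$, so that Corollary~\ref{cor:approxcor} applies. Let $T:=T^\sigma(\tau)$ and write
$$
A_{J^c(t)}\beta^\sigma_{J^c(t)}=A_{T^c\cap J^c(t)}\beta^\sigma_{T^c\cap J^c(t)}+A_{T\cap J^c(t)}\beta^\sigma_{T\cap J^c(t)}.
$$

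\textbf{The small part.} The first term is bounded by $\|A\|\,\|\beta^\sigma_{T^c}\|_2\leq 4\sqrt N\sqrt{\tau\|\beta^\sigma\|_1}$, by the H\"older estimate \eqref{eq:jhblfjnpeirjnrepijnqwlifj}. This equals $O(t\,\|\beta^\sigma\|_1)$, which is absorbed into the term $t^2\|\beta^\sigma\|_1$ since $t\geq1$.

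\textbf{The large part.} For the second term, $|T|\leq c_{\tref{lem:sparse_singular_values}}n$, so $A_T$ is a near isometry with $s_{\max}(A_T)\le 2\sqrt n$. Hence
$$
\|A_{T\cap J^c}\beta^\sigma_{T\cap J^c}\|_2\leq 2\sqrt n\,\|\beta^\sigma_{T\cap J^c}\|_2 .
$$
We estimate $\beta^\sigma_j$ for $j\in T\cap J^c$ via Corollary~\ref{cor:approxcor}: up to an error vector $w$, we have $n\beta^\sigma_j=\langle X_j,\sum_i\sigma_iy_i\rangle$. The corollary bounds
$$
\|w\|_2=O\Big(\sqrt{k\log k}\,\log(Ct^2)/t+n\,t\,\|\beta^\sigma\|_1/\sqrt n\Big).
$$
After multiplying by $2\sqrt n/n$, this contributes $O\big(\sqrt{k\log k}\,\log t/(t\sqrt n)+t\|\beta^\sigma\|_1\big)$. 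Since $k\le n/2$, the first piece is $O(1)$ (alternatively it can be absorbed in $\sqrt k/t$). It remains to bound
$$
\frac{2}{\sqrt n}\Big(\sum_{j\in T\cap J^c}\langle X_j,\textstyle\sum\sigma_iy_i\rangle^2\Big)^{1/2}.
$$

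\textbf{Using the definition of $J^c$.} For $j\notin J(t)$, condition (2) of Definition~\ref{def: Jt} gives $|\langle X_j,\sum\sigma_iy_i\rangle|<\tilde C k^{11}$ for every $\sigma$. Condition (1) says the set $S_j$ of bad signs, those with $|\langle X_j,\sum\sigma_iy_i\rangle|\ge t\sqrt k$, has $|S_j|<2^k/k^{100}$. Call $\sigma$ \emph{bad for $j$} if $\sigma\in S_j$. Let $R$ consist of those $\sigma$ that are good for Corollary~\ref{cor:approxcor} and that are bad for at most $k^{-22}\cdot n$ indices $j\in T$, or better, for at most $n/k^{22}$ indices overall. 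By double counting, $\sum_\sigma\#\{j:\sigma\in S_j\}\le N2^k/k^{100}$, so by Markov at most $K'2^k k^{-78}$ signs are excluded; this explains the exponent $78$. One subtlety is that $R$ must not depend on $t$. Since $J(t)$ is decreasing in $t$, define $S_j$ at the minimal admissible $t_0=\tilde C\log^2k$ and note that for larger $t$ the bad sets only shrink.

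\textbf{Summing over $j$.} For $\sigma\in R$, the good indices contribute at most $|T|\,t^2k\le (\|\beta^\sigma\|_1/\tau)t^2k=nk/C$. Their contribution after the factor $2/\sqrt n$ is $O(\sqrt k)$. This is too weak; we need $\sqrt k/t$, so the choice of $\tau$ has to be adjusted. The remedy is to use $|T|\le \|\beta^\sigma\|_1/\tau$ with a $\tau$ that also involves $k$, or to bound $\sum_{j\in T}\langle X_j,\cdot\rangle^2\le \|A_T^\top\|^2\|\sum\sigma_iy_i\|^2$ only on the bad subset. We would instead pick $\tau$ so that $|T|\le n/t^4$: take $\tau=t^4\|\beta^\sigma\|_1/n$, giving $|T|\,t^2k\le nk/t^2$ and hence the contribution $\sqrt k/t$. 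The small-part bound then becomes $\sqrt{N\tau\|\beta^\sigma\|_1}=O(t^2\|\beta^\sigma\|_1)$, matching the statement. The bad indices, at most $n/k^{22}$ of them, each contribute at most $\tilde C^2k^{22}$, for a total of $O(n)$ and a contribution of $O(1)$. This gives the claimed bound.

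\textbf{Main obstacle.} The delicate step is the $t$-uniformity of $R$ together with the Corollary error term, which carries $\log(\tau n/\|\beta^\sigma\|_1)=O(\log t)$ and must stay $O(1+\sqrt k/t)$. That should be fine, since $\sqrt{k\log k}\,\log t/(t^2\sqrt n)$ is small.
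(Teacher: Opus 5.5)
Your final argument, with $\tau=t^4\|\beta^\sigma\|_1/n$, is essentially the paper's proof. It uses H\"older on $T^c$ and Corollary~\ref{cor:approxcor} on $T=T^\sigma(\tau)$. It splits $T\cap J^c(t)$ according to whether $|\langle X_j,\sum_i\sigma_iy_i\rangle|\ge t\sqrt k$. It finishes with a Markov argument over signs that uses both failed conditions of Definition~\ref{def: Jt}. The paper's criterion $\|M_{\sigma,J^c(t)}\|_2\le\sqrt n$ for the truncated matrix $M$ is the $\ell_2$ version of your count of bad indices, and it gives the same exponent $78$.

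\textbf{The $t$-uniformity fix fails as written.} Since $J(t)$ decreases in $t$, $J^c(t)$ \emph{increases}. For $t>t_0$, indices $j\in J(t_0)\setminus J(t)$ enter $J^c(t)$. Your $R$, defined at level $t_0$, gives no control on how many of these indices have $\sigma\in S_j(t)$. You also cannot include them in the level-$t_0$ double count: for $j\in J(t_0)$ the set $S_j(t_0)$ may contain almost all signs. This is also why the count must be restricted to $j\in J^c$, not taken ``overall''.

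A correct repair is as follows. For each $j$ failing condition (2), set $S_j^*:=\bigcup_{t:\,j\notin J(t)}S_j(t)$. Three facts hold: $\{t:\,j\notin J(t)\}$ is an up-set, $S_j(t)$ is decreasing in $t$, and the inner products take finitely many values. Hence $S_j^*=S_j(t')$ for some $t'$ with $j\notin J(t')$, so $|S_j^*|<2^k/k^{100}$. Run your double counting with $S_j^*$ over all such $j$. This gives a single exceptional set of size $O(2^k/k^{78})$ that works for every $t$ simultaneously, because $j\in J^c(t)$ implies $S_j(t)\subseteq S_j^*$.

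Note that the paper's own $R$ also depends on $t$, through $M$ and $J^c(t)$. This is harmless there, since only $t=k^{1/8}$ is used in Proposition~\ref{prop:spansofcomp}.

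\textbf{Computational slip in the error term.} With $\tau=t^4\|\beta^\sigma\|_1/n$ one has $\sqrt{\|\beta^\sigma\|_1/\tau}=\sqrt n/t^2$. After multiplying by $2/\sqrt n$, the first error term of the corollary is therefore $O(\sqrt{k\log k}\,\log t/t^2)$, with no extra factor $1/\sqrt n$. This is not $O(1)$ in general, so the justification via $k\le n/2$ is wrong. It is, however, $O(\sqrt k/t)$ because $t\ge\tilde C_{\text{\tiny\ref{lem:dotprodstat}}}\log^2k$. That is your ``alternatively'' remark, and it is how the paper absorbs the term.
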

\begin{proof}
Let $W$ be the collection of the signs $\sigma\in\{-1,1\}^k$
satisfying the assertion of Corollary~\ref{cor:approxcor},
so that $|W|\geq 2^k-\frac{2^k}{k^{100}}$.

\medskip

\noindent {\it Matrix $M$ and set $R$.}
Now, let us define a $2^k \times N$ matrix $M$ whose rows are indexed by $\sigma \in \{\pm1\}^{k}$ and columns are indexed by $j \in [N]$. The entry 
$$
    M_{\sigma,j} := \big\langle X_j,\sum_{i=1}^k \sigma_i y_i\big\rangle\;
    \cdot {\bf 1}_{\{ |\langle X_j,\sum_{i=1}^k \sigma_i y_i\rangle| \ge  t\,\sqrt{k}\}}\,,
$$
which is a ``tail'' of the inner product between the vector $\sum_{i=1}^k \sigma_i y_i$ and the random vector $X_j$.

We remark that the way we truncate the inner products ensures that for each $j \notin J(t)$ (with $J(t)$ defined according to
Definition~\ref{def: Jt}), the column $M_j$ has strictly less than $2^k/k^{100}$ non-zero entries, and each non-zero entry of $M_j$ is bounded above
by $\tilde C_{\text{\tiny\ref{lem:dotprodstat}}}\,k^{11}$. This allows us to bound the Hilbert--Schmidt norm of $M_{J^c(t)}$, where
$J^c(t) = [N] \setminus J(t)$:
$$\|M_{J^c(t)}\|_{\rm HS}^2 \le 
\big(\tilde C_{\text{\tiny\ref{lem:dotprodstat}}}\,k^{11}\big)^2\,
\frac{2^k}{k^{100}}N \,.$$
Now, we define 
$$
    R := \Big\{ \sigma \in W :\; \|M_{\sigma,J^c}\|_2 \le \sqrt{n}\Big\}.
$$
From the above Hilbert--Schmidt norm bound, we have $|R| \ge 2^k - \frac{C'}{k^{78}}2^k$, for a constant $C'>0$.

\bigskip

\noindent {\it Decomposition of $J^c(t)$.}
For every $\sigma\in W$, and for $\tau_\sigma:=\frac{t^4}{n}\,\|\beta^\sigma\|_1$,
let 
$$
S_\sigma = 
\Big\{ j \in T^\sigma(\tau_\sigma)\,:\, \big|\big\langle X_j,\sum_{i=1}^k \sigma_i y_i\big\rangle\big|
<  t\,\sqrt{k} \Big\}\,. 
$$
For each $\sigma \in R$, we decompose the set $J^c(t)$ into three parts: 
\begin{align*}
    J^c(t)  
    =  \big(\underbrace{(T^\sigma(\tau_\sigma) \setminus S_\sigma) \cap J^c(t)}_{:=L_\sigma}\big) \sqcup \big(S_\sigma \cap J^c(t)\big) \sqcup \big((T^\sigma(\tau_\sigma))^c \cap J^c(t)\big) \,.
\end{align*}

\noindent {\it Treatment of
$L_\sigma :=(T^\sigma(\tau_\sigma)\setminus S_\sigma) \cap J^c(t)$.}
The set $T^\sigma(\tau_\sigma) \setminus S_\sigma$ is the collection of indices $j$ such that $M_{\sigma,j} = \langle X_j,\sum_{i=1}^k \sigma_i y_i\rangle$,
and at the same time $|\beta_j^\sigma|
\ge \tau_\sigma$.
From the definition of event $\Omega_{\text{\tiny\ref{lem:sparse_singular_values}}}$
and Corollary~\ref{cor:approxcor},
\begin{align*}
    \big\|A_{L_\sigma} \beta_{L_\sigma}^\sigma\big\|_2 
&\le 
    \frac{4\sqrt{N}}{n}\,\| n\beta_{L_\sigma}^\sigma\|_2 
\leq
\frac{4\sqrt{N}}{n}\,
\Big\|(A_{L_\sigma})^\top \sum_{i=1}^k \sigma_i y_i\Big\|_2
+
\frac{4\sqrt{N}}{n}\,\Big\|(A_{L_\sigma})^\top \sum_{i=1}^k \sigma_i y_i
-n\beta_{L_\sigma}^\sigma\Big\|_2\\
&\leq 
\frac{4\sqrt{N}}{n}\,
\Big\|(A_{L_\sigma})^\top \sum_{i=1}^k \sigma_i y_i\Big\|_2
+
O
  \bigg(\sqrt{\frac{k\log k}{n}}\,\log\Big(\frac{\tau_\sigma\,n}{\|\beta^\sigma\|_1}\Big) \sqrt{\frac{\|\beta^\sigma\|_1}{\tau_\sigma}}
  +\sqrt{\tau_\sigma\,n\,\|\beta^\sigma\|_1}\bigg),
\end{align*}
where we emphasize that
$(A_{L_\sigma})^\top \sum_{i=1}^k \sigma_i y_i 
= M_{\sigma,L_\sigma}$. 

\bigskip

\noindent {\it Treatment of
$S_\sigma \cap J^c(t)$ and $(T^\sigma(\tau))^c \cap J^c(t)$.}
For any subset $S \subset S_\sigma$, 
by Corollary~\ref{cor:approxcor} and the condition
$S\subset T^\sigma(\tau)$,
\begin{align*}
\Big\| (A_{S})^\top \sum_{i=1}^k \sigma_i y_i 
- n\, \beta^\sigma_{S} \Big\|_2
&\le 
\Big\| (A_{T^\sigma(\tau_\sigma)})^\top \sum_{i=1}^k \sigma_i y_i 
- n\, \beta^\sigma_{T^\sigma(\tau_\sigma)} \Big\|_2\\
&\leq
  O
  \bigg(\sqrt{k\log k}\,\log\Big(\frac{\tau_\sigma\,n}{\|\beta^\sigma\|_1}\Big) \sqrt{\frac{\|\beta^\sigma\|_1}{\tau_\sigma}}
  +n \sqrt{\tau_\sigma\,\|\beta^\sigma\|_1}\bigg).
\end{align*}
Using that
$\big|\big\langle X_j,\sum_{i=1}^k \sigma_i y_i\big\rangle\big| <
t\,\sqrt{k}$ for $j \in S_\sigma$, we obtain
\begin{equation}\label{eq: norm_A_S_sigma_lambda}
     \|n\, \beta^\sigma_{S} \|_2 
\le 
     t\,\sqrt{k\,|S|} + 
     O
  \bigg(\sqrt{k\log k}\,\log\Big(\frac{\tau_\sigma\,n}{\|\beta^\sigma\|_1}\Big) \sqrt{\frac{\|\beta^\sigma\|_1}{\tau_\sigma}}
  +n \sqrt{\tau_\sigma\,\|\beta^\sigma\|_1}\bigg).
\end{equation}
We apply \eqref{eq: norm_A_S_sigma_lambda} with $S = S_\sigma \cap J^c(t)$ to obtain 
\begin{align*}
    \Big\|A_{S_\sigma \cap J^c(t)}
    \beta_{S_\sigma \cap J^c(t)}^\sigma\Big\|_2
&=
    \sqrt{n}\;
    O\big(\big\|\beta_{S_\sigma \cap J^c(t)}^\sigma\big\|_2\big)\\
&\leq 
O\bigg(t\,\sqrt{\frac{k\,|S_\sigma \cap J^c(t)|}{n}} +
\sqrt{\frac{k\log k}{n}}\,\log\Big(\frac{\tau_\sigma\,n}{\|\beta^\sigma\|_1}\Big) \sqrt{\frac{\|\beta^\sigma\|_1}{\tau_\sigma}}
  + \sqrt{\tau_\sigma\,n\,\|\beta^\sigma\|_1}\bigg).
\end{align*}
Further, 
by Holder's inequality and
definition of $\Omega_{\text{\tiny\ref{lem:sparse_singular_values}}}$,
$$
\Big\|A_{(T^\sigma(\tau_\sigma))^c \cap J^c(t)} \beta_{(T^\sigma(\tau_\sigma))^c \cap J^c(t)}^\sigma\Big\|_2
=O(\sqrt{n}\,\|\beta_{(T^\sigma(\tau_\sigma))^c}^\sigma\|_2)
=O(\sqrt{\tau_\sigma\,n\,\|\beta^\sigma\|_1}).
$$
With the estimates of the three parts, we can conclude via triangle inequality that for every $\sigma\in R$,
\begin{align*}
    &\Big\|A_{J^c(t)}\, \beta_{J^c(t)}^\sigma\Big\|_2
\leq
\big\|A_{L_\sigma} \beta_{L_\sigma}^\sigma\big\|_2
+
\big\|A_{S_\sigma \cap J^c(t)} \beta_{S_\sigma \cap J^c(t)}^\sigma\big\|_2
+
 \big\|A_{(T^\sigma(\tau_\sigma))^c \cap J^c(t)}
 \beta_{(T^\sigma(\tau_\sigma))^c \cap J^c(t)}^\sigma\big\|_2\\
 &\leq O\bigg(\frac{1}{\sqrt{n}}
 \big\|M_{\sigma,L_\sigma}\big\|_2
 +
 t\,\sqrt{\frac{k\,|S_\sigma \cap J^c(t)|}{n}} +
\sqrt{\frac{k\log k}{n}}\,\log\Big(\frac{\tau_\sigma\,n}{\|\beta^\sigma\|_1}\Big) \sqrt{\frac{\|\beta^\sigma\|_1}{\tau_\sigma}}
  + \sqrt{\tau_\sigma\,n\,\|\beta^\sigma\|_1}\bigg)\\
  &\leq
  O\bigg(1
 +
 t\,\sqrt{\frac{k\,\|\beta^\sigma\|_1}{\tau_\sigma\,n}} +
\sqrt{\frac{k\log k}{n}}\,\log\Big(\frac{\tau_\sigma\,n}{\|\beta^\sigma\|_1}\Big) \sqrt{\frac{\|\beta^\sigma\|_1}{\tau_\sigma}}
  + \sqrt{\tau_\sigma\,n\,\|\beta^\sigma\|_1}\bigg).
\end{align*}
Applying the definition of $\tau_\sigma$,
$$
\Big\|A_{J^c(t)}\, \beta_{J^c(t)}^\sigma\Big\|_2
\leq 
O\bigg(1
 +
 \frac{1}{t}\,\sqrt{k} +
\frac{1}{t^2}\,\sqrt{k\log k}\,\log t 
  + t^2\,\|\beta^\sigma\|_1\bigg).
$$
The result follows.
\end{proof}

The next proposition, which is the main result of the subsection,
shows that for a typical realization of $P_{N,n}$,
there is no $k$--tuple of unit vectors $y_1,\dots,y_k$
with $\|\cdot\|_{P_{N,n}}$--norms of constant order
such that their random sign combinations
$\big\|
\sum\nolimits_{i=1}^k \sigma_i y_i
\big\|_{P_{N,n}}$ are small on average.

\begin{proposition}\label{prop:spansofcomp}
Let $K'\geq K>1$.
There is a constant $C_{\text{\tiny\ref{prop:spansofcomp}}}\geq 1$
depending only on $K,K'$ with the following property.
Assume $n\geq c_{\text{\tiny\ref{lem:sparse_singular_values}}}^{-1}$
and $K'\geq N/n\geq K$.
Condition on any realization of $X_1,\dots,X_N$
from the intersection
$\Omega_{\text{\tiny\ref{lem:sparse_singular_values}}}
\cap \Omega_{\text{\tiny\ref{lem:dotprodstat}}}$.
Let $1\leq k\leq n/2$, and let $y_1,\dots,y_k$
be vectors in $\R^n$ of unit Euclidean length
such that $\|\beta(y_i)\|_{1}=\|y_i\|_{P_{N,n}}\geq
C_{\text{\tiny\ref{prop:spansofcomp}}}\,k^{-1/9}$
for every $i\leq k$, where $\beta(y_i)$ is given in Definition~\ref{def: beta 09812720894}. Then,
with $\beta^\sigma$ from Definition~\ref{def: betasigma 15262},
$$
\Exp_\sigma\,\|\beta^\sigma\|_1
=\Exp_\sigma\,\Big\|
\sum\nolimits_{i=1}^k \sigma_i y_i
\Big\|_{P_{N,n}}\geq k^{1/8}.
$$
\end{proposition}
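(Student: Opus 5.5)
Argue by contradiction: assume $\Exp_\sigma\|\beta^\sigma\|_1< k^{1/8}$ and follow the outline from the Introduction. First dispose of small $k$: if $k$ is below a threshold depending on $K,K'$, then $C_{\text{\tiny\ref{prop:spansofcomp}}}k^{-1/9}$ exceeds $C_{\text{\tiny\ref{PN:inradius}}}$, and no unit vector has $\|\cdot\|_{P_{N,n}}$-norm that large because $r(P_{N,n})\ge c_{\text{\tiny\ref{PN:inradius}}}$. So assume $k$ is large. By Markov's inequality, for at least half of the signs we have $\|\beta^\sigma\|_1\le 2k^{1/8}$.

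\textbf{Choice of $t$ and the set $R'$.} Take $t:=k^{1/8}$ (at least $\tilde C_{\text{\tiny\ref{lem:dotprodstat}}}\log^2k$ for large $k$) and let $J:=J(t)$. By Remark~\ref{rem: Jtsize}, $|J|\le n\,k^{-1/4}\,{\rm polylog}(k)$, so the second bullet of Lemma~\ref{PN:inradius} applies to $J$ and gives relative in-radius $\gtrsim k^{1/8}/{\rm polylog}(k)$ in $H:={\rm span}\{X_j:j\in J\}$. Intersect the set $R$ of Lemma~\ref{lem: Abetasigma} with the Markov set; the intersection $R'$ has at least a third of all signs. For every $\sigma\in R'$,
$$
\big\|A_{J^c}\beta^\sigma_{J^c}\big\|_2=O\big(1+k^{1/2-1/8}+k^{1/4}k^{1/8}\big)=O(k^{3/8}).
$$
Since $\sum_i\sigma_iy_i-A_{J^c}\beta^\sigma_{J^c}=A_J\beta^\sigma_J\in H$, this yields $\|\Proj_{H^\perp}\sum_i\sigma_iy_i\|_2=O(k^{3/8})$ on $R'$.

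\textbf{From the sign sum to individual vectors.} Since the bound holds on a constant fraction of signs and $\Exp_\sigma\|\Proj_{H^\perp}\sum\sigma_iy_i\|_2^2=\sum_i\|\Proj_{H^\perp}y_i\|_2^2$, pass from $R'$ to this average. The tool is a Paley--Zygmund / Kahane--Khintchine reverse inequality for Rademacher sums in Hilbert space: a vector Rademacher sum whose norm is at most $a$ with probability $\ge 1/3$ has second moment $O(a^2)$. This gives $\sum_i\|\Proj_{H^\perp}y_i\|_2^2=O(k^{3/4})$, so at least one index $i$ (in fact most of them) satisfies $\|\Proj_{H^\perp}y_i\|_2=O(k^{-1/8})$. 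Then $\|\Proj_{H^\perp}y_i\|_{P_{N,n}}\le c_{\text{\tiny\ref{PN:inradius}}}^{-1}O(k^{-1/8})$. Also $\|\Proj_Hy_i\|_2\le1$, and $P_{N,n}\cap H$ contains $\conv\{\pm X_j:j\in J\}$, so $\|\Proj_Hy_i\|_{P_{N,n}}\lesssim k^{-1/8}{\rm polylog}(k)$. Adding the two bounds, $\|y_i\|_{P_{N,n}}\lesssim k^{-1/8}{\rm polylog}(k)<C_{\text{\tiny\ref{prop:spansofcomp}}}k^{-1/9}$ for large $k$, which is the desired contradiction.

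\textbf{Main obstacle.} The delicate step is matching exponents. The term $t^2\|\beta^\sigma\|_1$ in Lemma~\ref{lem: Abetasigma} must stay $o(\sqrt{k})$ after averaging, while $\sqrt{n/|J|}$ must still be a positive power of $k$; the choice $t=k^{1/8}$ balances these, but the powers should be checked, and $t$ perturbed slightly if needed (for instance $t=k^{1/8-\epsilon}$) to absorb the polylog factors. Note also that Lemma~\ref{lem: Abetasigma} is stated for all $t$ simultaneously on $R$, so fixing one $t$ is legitimate. The passage from a constant fraction of signs to the average is the other point needing care; if that concentration step is awkward, replace it by a direct use of $\Exp\|\sum\sigma_iv_i\|_2^2$ restricted to $R'$, combined with the trivial bound $\|\Proj_{H^\perp}\sum\sigma_iy_i\|_2\le k$ on the complement.
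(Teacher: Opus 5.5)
Your argument follows the paper's proof: the choice $t=k^{1/8}$, Lemma~\ref{lem: Abetasigma} on a large set of signs, the bound $\|\Proj_{H^\perp}\sum_i\sigma_iy_i\|_2\le\|A_{J^c}\beta^\sigma_{J^c}\|_2$, and the two in-radius estimates. The exponents work with $t=k^{1/8}$ exactly as chosen, with no perturbation. However, the step from the good set of signs to the average fails as stated.

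The claimed reverse inequality is false: it says a Hilbert-space Rademacher sum $S$ with $\|S\|_2\le a$ on probability $\ge 1/3$ has $\Exp\|S\|_2^2=O(a^2)$. Take $z_1=z_2=z\neq0$ and $z_i=0$ for $i\ge3$. Then $S=0$ with probability $1/2$, while $\Exp\|S\|_2^2=2\|z\|_2^2$ is arbitrary. So no inequality of this type holds at any probability level $\le 1/2$. The claim "most indices" is false for the same reason. Your fallback also fails. Bounding $\|\Proj_{H^\perp}\sum_i\sigma_iy_i\|_2^2$ by $k^2$ on a complement of probability up to $2/3$ contributes order $k^2$. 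That is worse than the trivial bound $\sum_i\|\Proj_{H^\perp}y_i\|_2^2\le k$.

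The paper's fix is to make the good set have probability close to one, not just a constant fraction. Apply Markov at level $\lambda k^{1/8}$ with a large constant $\lambda$, so that $\|\beta^\sigma\|_1\le\lambda k^{1/8}$ for a $(1-1/\lambda)$ fraction of signs. Intersecting with $R$ leaves a set $G$ of probability at least $1-1/\lambda-O(k^{-78})$. On $G$, $\|\Proj_{H^\perp}\sum_i\sigma_iy_i\|_2\le a:=O(\lambda k^{3/8})$.

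Then use the moment bound $\Exp\|S\|_2^4\le 3(\Exp\|S\|_2^2)^2$ for Hilbert-space Rademacher sums. Either apply Paley--Zygmund, as the paper does, or repair your fallback with Cauchy--Schwarz on the bad set: $\Exp\|S\|_2^2\le a^2+\sqrt{3\,\Prob(G^c)}\,\Exp\|S\|_2^2$. Once $\Prob(G^c)\le 1/12$, this gives $\Exp\|S\|_2^2\le 2a^2$. Hence $\sum_i\|\Proj_{H^\perp}y_i\|_2^2=O(k^{3/4})$, and the rest of your argument goes through unchanged.

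A minor slip: in the small-$k$ reduction, compare with $c^{-1}$ from the lower in-radius bound of Lemma~\ref{PN:inradius}, not with the upper in-radius constant.
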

\begin{proof}
We will assume that $C_{\text{\tiny\ref{prop:spansofcomp}}}$
is large.
Observe that, in view of the in-radius estimate
$c_{\text{\tiny\ref{PN:inradius}}}\leq r(P_{N,n})$
from Lemma~\ref{PN:inradius},
every unit vector $y_i$ must satisfy $\|y_i\|_{P_{N,n}}\leq c_{\text{\tiny\ref{PN:inradius}}}^{-1}$.
Therefore, using the condition
$\|y_i\|_{P_{N,n}}\geq
C_{\text{\tiny\ref{prop:spansofcomp}}}\,k^{-1/9}$,
we can (and will) assume that
$$
k\geq \big(c_{\text{\tiny\ref{PN:inradius}}}\,
C_{\text{\tiny\ref{prop:spansofcomp}}}\big)^9.
$$
In particular, $k$ can be made greater than an arbitrary universal constant.

We start with the 
following standard observation. For every collection
of vectors $z_1,z_2,\dots,z_k$,
$$
\Exp_\sigma\,\Big\|
\sum\nolimits_{i=1}^k \sigma_i z_i\Big\|_2^2
=\sum_{i=1}^k \big\|z_i\big\|_2^2,
$$
and
$$
\Exp_\sigma\,\Big\|
\sum\nolimits_{i=1}^k \sigma_i z_i\Big\|_2^4
=
O\bigg(\sum_{i=1}^k \big\|z_i\big\|_2^2\bigg)^2,
$$
where the implicit constant is universal.
Accordingly, applying the Paley--Zygmund inequality, 
there is a universal constant $c\in(0,1)$
such that for every $s>0$ with
$$
\Prob\bigg\{\Big\|\sum\nolimits_{i=1}^k \sigma_i z_i\Big\|_2\leq c\,s\bigg\}
\geq 1-c,
$$
we have
$$
\sum_{i=1}^k \big\|z_i\big\|_2^2\leq s^2,
$$
and hence there is an index $i\leq k$
with $\big\|z_i\big\|_2\leq s/\sqrt{k}$.

\medskip

To prove the proposition, we will
argue by contradiction.
We suppose that $\Exp_\sigma\,\|\beta^\sigma\|_1\leq k^{1/8}$.
Applying Lemma~\ref{lem: Abetasigma},
we obtain a subset  
  $ R \subset \{\pm1\}^{k}$
  with $|R| \ge 2^k -
  \frac{C_{\text{\tiny\ref{lem: Abetasigma}}}}{k^{78}}2^k$ 
  such that for every $\sigma \in R$ and
  for every $t\geq
\tilde C_{\text{\tiny\ref{lem:dotprodstat}}}
\log^{2} k$,
  \begin{align*}
    \Big\|A_{ J^c(t)} \beta_{J^c(t)}^\sigma\Big\|_2 
    \leq C
    \Big(1
 +
 \frac{1}{t}\,\sqrt{k}
  + t^2\,\|\beta^\sigma\|_1\Big),
  \end{align*}
  where $C$ depends only on $K,K'$,
  and where the set $J(t)$ is given by Definition~\ref{def: Jt}.
Choose $t:=k^{1/8}$ (note that we can suppose that
$k^{1/8}\geq \tilde C_{\text{\tiny\ref{lem:dotprodstat}}}
\log^{2} k$). Further, note that in view 
of the bound $\Exp_\sigma\,\|\beta^\sigma\|_1\leq k^{1/8}$,
we have $\|\beta^\sigma\|_1\leq 2\,c^{-1}\,k^{1/8}$
for at least $2^{k}-\frac{c}{2}\,2^k$ signs $\sigma$,
where the constant $c$ is taken from the claim at the beginning of the proof.
Then, from the above,
$$
\Big\|A_{ J^c(t)} \beta_{J^c(t)}^\sigma\Big\|_2 
    \leq 4\,C\,c^{-1}\,k^{3/8}\quad\mbox{for at least
    $2^{k}-\frac{c}{2}\,2^k-\frac{C_{\text{\tiny\ref{lem: Abetasigma}}}}{k^{78}}2^k$ signs $\sigma$.}
$$
Denote by $\Proj$ the orthogonal projection onto 
the linear span of vectors $X_j$, $j\in J(t)$,
and let $\Proj_\perp$ be the orthogonal projection onto its complement.
It is easy to see that for every $\sigma$,
$$
\big\|A_{ J^c(t)} \beta_{J^c(t)}^\sigma\big\|_2
\geq \Big\|\Proj_\perp\Big(
\sum\nolimits_{i=1}^k \sigma_i\,y_i
\Big)\Big\|_2.
$$
Thus,
$$
\Prob_\sigma\bigg\{
\Big\|\Proj_\perp\Big(
\sum\nolimits_{i=1}^k \sigma_i\,y_i
\Big)\Big\|_2\leq 
4\,C\,c^{-1}\,k^{3/8}
\bigg\}\geq 
1-\frac{c}{2}-\frac{C_{\text{\tiny\ref{lem: Abetasigma}}}}{k^{78}}
>1-c,
$$
where we used the assumption that 
$
k\geq \big(c_{\text{\tiny\ref{PN:inradius}}}\,
C_{\text{\tiny\ref{prop:spansofcomp}}}\big)^9
$
and $C_{\text{\tiny\ref{prop:spansofcomp}}}$ is large.
Applying the claim from the beginning of the argument,
we get that there is an index $i_0\leq k$
with $\big\|\Proj_\perp(y_{i_0})\big\|_2\leq 4\,C\,c^{-2}\,k^{-1/8}$,
and, following the in-radius estimates, 
$\big\|\Proj_\perp(y_{i_0})\big\|_{P_{N,n}}
\leq 4\,C\,c^{-2}\,c_{\text{\tiny\ref{PN:inradius}}}^{-1}\,k^{-1/8}$.
Further, 
according to Remark~\ref{rem: Jtsize} and our choice of $t$,
$$
|J(t)|\leq 
n\,\frac{\tilde C_{\text{\tiny\ref{lem:dotprodstat}}}\log t\;
\log^{4} k }{t^2}
+n\,k^{-10}\leq 
\tilde C_{\text{\tiny\ref{lem:dotprodstat}}}\,
n\,k^{-1/4}\;
\log^{5} k
+n\,k^{-10}\leq c_{\tref{lem:sparse_singular_values}}\,n,
$$
where in the last inequality we used once again that $k$ is large.
Applying Lemma~\ref{PN:inradius}, we get that the (relative) in-radius
of $\conv\,\big\{\pm X_j,\quad j\in J(t)\big\}$ satisfies
$$
r\big(\conv\,\big\{\pm X_j,\quad j\in J(t)\big\}\big)
\geq
\frac{1}{2}\sqrt{\frac{n}{|J(t)|}},
$$
and hence
$$
\big\|\Proj(y_{i_0})\big\|_{P_{N,n}}
\leq 2\,\sqrt{|J(t)|/n}<\frac{1}{2}k^{-1/9}.
$$
Combining the upper bounds for $\big\|\Proj_\perp(y_{i_0})\big\|_{P_{N,n}}$
and $\big\|\Proj(y_{i_0})\big\|_{P_{N,n}}$, we get
$$
\|y_{i_0}\|_{P_{N,n}}< k^{-1/9},
$$
leading to contradiction.
\end{proof}

\subsection{Embedding $\ell_\infty^k$ into $(\R^n,\|\cdot\|_{P_{N,n}})$}

The main result of this section is the following

\begin{theorem}\label{th: Gluskinlinfty}
There is a universal constant $\alpha>0$,
and constants $c_{\text{\tiny\ref{th: Gluskinlinfty}}}>0$
and $C_{\text{\tiny\ref{th: Gluskinlinfty}}}
\geq c_{\text{\tiny\ref{lem:sparse_singular_values}}}^{-1}$
depending only on $K,K'>1$ with the following property.
Let integers $N,n$
satisfy $K\leq \frac{N}{n}\leq K'$ and $n\geq
C_{\text{\tiny\ref{th: Gluskinlinfty}}}$.
Condition on any realization of $X_1,\dots,X_N$
from the intersection
$\Omega_{\text{\tiny\ref{lem:sparse_singular_values}}}
\cap \Omega_{\text{\tiny\ref{lem:dotprodstat}}}
$,
where the events are defined within respective lemmas.
Then
the polytope $P_{N,n}$ has the following property:
for every $1\leq k\leq n/2$ and every $k$--dimensional
subspace $E$ of $(\R^n,\|\cdot\|_{P_{N,n}})$,
$$
d_{BM}(\ell_\infty^k,E)
\geq c_{\text{\tiny\ref{th: Gluskinlinfty}}}\,k^\alpha.
$$
\end{theorem}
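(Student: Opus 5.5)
We argue by contradiction, following the outline in the introduction. Suppose $d_{BM}(\ell_\infty^k,E)=\rho\le c\,k^\alpha$ for some $k$-dimensional $E$; small $k$ is absorbed by choosing $c$ small. Apply Lemma~\ref{lem: ellinftyreg} to obtain $L$ with $|L|=\tilde k\gtrsim k/\log k$ and unit vectors $y_i\in E$, $i\in L$, with $\|y_i\|_{P_{N,n}}\in[\delta,2\delta]$. These satisfy the Lipschitz bound $\|\sum_{i\in L} v_iy_i\|_{P_{N,n}}\le C\delta\rho\|v\|_\infty$. In particular $\|\beta^\sigma\|_1\le C\delta\rho$ for every sign vector $\sigma$, while by Lemma~\ref{PN:inradius} we have $\delta\le c_{\tref{PN:inradius}}^{-1}$. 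The goal is to show that such a configuration forces $\rho\ge |L|^{\alpha'}$ for a universal $\alpha'>0$.

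\emph{Case 1: $\delta\ge C_{\tref{prop:spansofcomp}}|L|^{-1/9}$.} Proposition~\ref{prop:spansofcomp} applies directly and gives $\Exp_\sigma\|\beta^\sigma\|_1\ge |L|^{1/8}$. Hence $C\delta\rho\ge |L|^{1/8}$, and since $\delta=O(1)$ we get $\rho\gtrsim |L|^{1/8}$.

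\emph{Case 2: $\delta$ small.} Here we use the multiscale counts $m_\delta(y_i,r)$ and split into two subcases, with a small exponent $\alpha_0$.

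\emph{Case 2a.} For a constant fraction of $i\in L$ the regularity condition $m_\delta(y_i,r)\le |L|^{\alpha_0}2^{-2r}n$ holds for all $0\le r\le\log_2\sqrt{|L|}$. First we show that for most such $i$ the spiky part $\sum_j|\beta_j(y_i)|{\bf 1}_{\{|\beta_j(y_i)|>\delta\sqrt{|L|}/n\}}$ is at most $|L|^{-1/16}\delta$. Otherwise, take $J$ to be the set of large-coefficient indices, of size $\le c n$. We would show that $\sum_{h\in J}\sqrt{\sum_i\beta_h(y_i)^2}$ is comparable to $\Exp_\sigma\|\beta^\sigma\|_1\lesssim\delta\rho$. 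The mechanism is the incompressibility of $\sum_i\sigma_i\beta(y_i)-\beta^\sigma$, which is a zero combination and so falls under Lemma~\ref{PN:incompcomb}. Through a pseudo-incompressibility lemma (the informal Lemma~\ref{lem:generalPNbasic}) this yields $\Omega(n)$ indices outside $J$ carrying too much $\ell_2$-mass at intermediate scales, contradicting the bounds on $m_\delta$.

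Once the spiky parts are controlled, truncate to $y_i'=\sum_j\beta_j(y_i){\bf 1}_{\{|\beta_j(y_i)|\le\delta\sqrt{|L|}/n\}}X_j$. Then $\|y_i-y_i'\|_{P_{N,n}}\le |L|^{-1/16}\delta$. The regularity bound together with $\|A\|\le4\sqrt N$ gives $\|y_i'\|_2\lesssim\sqrt n\,\|\beta(y_i')\|_2\lesssim |L|^{\alpha_0/2}\sqrt{\log |L|}\,\delta$. Normalize $z_i=y_i'/\|y_i'\|_2$. These are unit vectors with $\|z_i\|_{P_{N,n}}\gtrsim |L|^{-\alpha_0/2-o(1)}\ge C|L|^{-1/9}$ once $\alpha_0$ is small. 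Proposition~\ref{prop:spansofcomp} then gives $\Exp\|\sum\sigma_iz_i\|_{P_{N,n}}\ge|L|^{1/8}$.

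To return to the $y_i$, we need comparable Euclidean norms for the $y_i'$ so that the normalization commutes with the sign sums up to bounded factors. We obtain this by pigeonholing dyadically on $\|y_i'\|_2$ and keeping a subset of size $\gtrsim|L|/\log|L|$. The truncation error $\Exp\|\sum\sigma_i(y_i-y_i')\|\le |L|\cdot|L|^{-1/16}\delta$ is far too crude. I would instead bound the difference via the Lipschitz estimate applied to the truncated coefficients; at worst this costs a factor $\rho$, giving $\rho^2\gtrsim |L|^{1/8-\alpha_0}$.

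\emph{Case 2b.} For most $i$ some scale violates regularity, that is, $m_\delta(y_i,r)>|L|^{\alpha_0}2^{-2r}n$. This should itself force large distortion (Lemma~\ref{lem:spikymdelta}). I would pigeonhole a common scale $r$ for many $i$. The coefficients at that scale have $\ell_2$-mass at least $|L|^{\alpha_0}\delta^2/n$, so $\|\sum\sigma_i\beta(y_i)\|_2^2$ is typically at least $|L|^{1+\alpha_0}\delta^2/n$ restricted to these coordinates. Incompressibility of the difference from $\beta^\sigma$, combined with $\|\beta^\sigma\|_1\le C\delta\rho$, should then give $\rho\ge |L|^{\Omega(\alpha_0)}$.

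\emph{Main obstacle.} I expect Case 2a to be the hardest step: both the pseudo-incompressibility argument excluding spiky coefficient vectors, and controlling the truncation error uniformly in $\sigma$. Throughout, all estimates are polynomial in $|L|$, so the exponent $\alpha$ comes out as a small universal constant.
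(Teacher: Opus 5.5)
Your overall architecture matches the paper's. You reduce via Lemma~\ref{lem: ellinftyreg}, split on whether the counts $m_\delta(y_i,r)$ are regular, and dispose of the irregular case with Lemma~\ref{lem:spikymdelta}. In the regular case you exclude heavy spiky tails via Lemma~\ref{lem:generalPNbasic}, then truncate, renormalize and invoke Proposition~\ref{prop:spansofcomp}. Your Case~1 is harmless but redundant, since the regular/irregular dichotomy covers all $\delta$.

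\textbf{The gap is the last step of Case~2a.} You apply Proposition~\ref{prop:spansofcomp} to essentially all of $L$. You correctly note that the triangle-inequality bound $|L|\cdot|L|^{-1/16}\delta$ on the truncation error is then useless. Your proposed fix, controlling $\sum_i\sigma_i(y_i-y_i')$ ``via the Lipschitz estimate applied to the truncated coefficients'', does not work.
\begin{itemize}
\item The Lipschitz bound only controls $\|\sum_i v_iy_i\|_{P_{N,n}}$, i.e.\ vectors of $E$.
\item The vector $y_i-y_i'=\sum_j\beta_j(y_i){\bf 1}_{\{|\beta_j(y_i)|>\delta\sqrt{|L|}/n\}}X_j$ generally lies outside $E$.
\item The only handle you have on $\|\sum_i\sigma_i(y_i-y_i')\|_{P_{N,n}}$ is the $\ell_1$-norm of $\sum_i\sigma_i\cdot(\text{spiky part of }\beta(y_i))$. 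Nothing in your argument bounds this by $O(\delta\rho)$.
\end{itemize}
So the claimed $\rho^2\gtrsim|L|^{1/8-\alpha_0}$ is unsupported.

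\textbf{The missing idea is to shrink the family before applying the proposition.} Take any $U$ of size $\lfloor|L|^{1/16}\rfloor$, matching the truncation threshold $\varepsilon=|L|^{-1/16}$.
\begin{itemize}
\item The crude bound then gives $\sum_{i\in U}\|y_i-y_i'\|_{P_{N,n}}/\|y_i'\|_2=O(1)$, because $\|y_i'\|_2\gtrsim\|y_i'\|_{P_{N,n}}\gtrsim\delta$ by the in-radius bound.
\item The main term is handled directly by the Lipschitz bound with $v_i=\sigma_i/\|y_i'\|_2$, where $\|v\|_\infty=O(1/\delta)$. So no dyadic pigeonholing of $\|y_i'\|_2$ is needed.
\item The proposition's hypothesis becomes $\|\tilde y_i\|_{P_{N,n}}\ge C|U|^{-1/9}\approx C|L|^{-1/144}$. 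This is still implied by $\|\tilde y_i\|_{P_{N,n}}\gtrsim|L|^{-\alpha_0/2-o(1)}$ for small $\alpha_0$.
\item The conclusion $|U|^{1/8}\approx|L|^{1/128}$ still beats $O(\rho)+O(1)$.
\end{itemize}
This is exactly the content of Lemma~\ref{lem:stairs}.

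\textbf{The same sub-selection is hidden in your exclusion of spiky vectors.} Each $y_i$ has at most $O(n/\sqrt{|L|})$ coefficients above $\delta\sqrt{|L|}/n$. So $|J|\le cn$ forces you to work with a subfamily $U_2$ of size $\ll\sqrt{|L|}$, not with ``most'' of $L$. Meanwhile the hypothesis of Lemma~\ref{lem:generalPNbasic}, i.e.\ $c\,\varepsilon\delta\sqrt{|U_2|}\gtrsim\delta\rho$, requires $|U_2|\gg|L|^{1/8}\rho^2$. The paper takes $|U_2|=\lfloor|L|^{1/8+0.03}\rfloor$.

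Finally, your Case~2b sketch is only heuristic. It is fine if you simply invoke Lemma~\ref{lem:spikymdelta}. Its proof needs the preprocessing Lemma~\ref{lem:cleaning II} and a decomposition of $J$ by multiplicity, not the one-line incompressibility argument you describe.
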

In view of the probability estimates for the events
$\Omega_{\text{\tiny\ref{lem:sparse_singular_values}}}$ and
$\Omega_{\text{\tiny\ref{lem:dotprodstat}}}$
from Lemmas~\ref{lem:sparse_singular_values}, and~\ref{lem:dotprodstat}
the above theorem immediately implies Theorem~B from the introduction, which, in turn,
through the aforementioned result of Maurey and Pisier \cite{MP76}, yields Theorem~A.

Proposition~\ref{prop:spansofcomp} proved in the previous
subsection, can be viewed as a strongly specialized
version of the above theorem.
Our strategy in proving the result
is by reduction of the general setting to the one treated in
Proposition~\ref{prop:spansofcomp}.
Assume that vectors $y_1,\dots,y_k$
span a subspace of $(\R^n,\|\cdot\|_{P_{N,n}})$
with a small Banach--Mazur distance to $\ell_\infty^k$.
Analysis of vector tuples $(y_1,\dots,y_k)$ below
splits into two major cases: either
there are many pairs of indices $i$ and $r$ corresponding
to large values of
the ratios $\frac{m_\delta(y_i,r)}{2^{-2r}\,n}$ (see Definition~\ref{def: myr 334q}),
or these expressions
are well-controlled from above uniformly in $i$ and $r$.
In the former case, the coefficient vectors $\beta(y_i)$
are ``spiky''. We will show that this feature
ultimately contradicts Lemma~\ref{PN:incompcomb} dealing with
incompressibility of coefficient vectors producing 
zero sums of $X_i$'s, i.e cannot happen with a non-negligible probability
(see Lemma~\ref{lem:spikymdelta}).
On the other hand, in the latter case---which corresponds
to ``well-behaved'' $\beta(y_i)$'s---we are able
to ``truncate'' the coefficients leading to
a collection of new vectors $\tilde y_i$
with roughly comparable $\|\cdot\|_2$ and $\|\cdot\|_{P_{N,n}}$--norms (Lemma~\ref{lem:stairs}).
We emphasize that the proof of Theorem~\ref{th: Gluskinlinfty}
is essentially deterministic after conditioning.

\medskip

The first step of the proof is a strong corollary
of Lemma~\ref{PN:incompcomb} applied to coefficient vectors
$\beta(\cdot)$:

\begin{lemma}[Pseudo-incompressibility of $(\sqrt{\sum_{i\in L}\beta_h(y_i)^2})_{h\in[N]}$]\label{lem:generalPNbasic}
For every $K'\geq K>1$
there is a constant $c_{\text{\tiny\ref{lem:generalPNbasic}}}\in(0,1]$
depending only on $K,K'$
with the following properties.
Assume that $n\geq c_{\text{\tiny\ref{lem:sparse_singular_values}}}^{-1}$
and $K'\geq \frac{N}{n}\geq K$.
Condition on any realization of $X_1,\dots,X_N$
in $\Omega_{\text{\tiny\ref{lem:sparse_singular_values}}}$.
Assume that $J\subset [N]$
is a non-empty subset of indices with
$|J|\leq c_{\text{\tiny\ref{lem:generalPNbasic}}}\,n$.
Further, let $y_i$, $i\in L$, be a finite collection of non-zero vectors
in $\R^n$, let $\sigma$ be a uniform random
vector of signs indexed over $L$, and 
let $\beta^\sigma$ be the coefficient vector
corresponding to $\sum_{i\in L}\sigma_i\,y_i$.
Assume further that
\begin{equation}\label{eq: 315aslkfnl}
c_{\text{\tiny\ref{lem:generalPNbasic}}}\,
\sum_{h\in J}\sqrt{\sum_{i\in L}\beta_h(y_i)^2}
\geq 
\Exp_{\sigma}\,\|\beta^\sigma\|_1.
\end{equation}
Then there are at least $c_{\text{\tiny\ref{lem:generalPNbasic}}}\,n$
indices $j\in J^c$ satisfying
$$
\sqrt{\sum_{i\in L}\beta_j(y_i)^2}
\geq 
\frac{c_{\text{\tiny\ref{lem:generalPNbasic}}}}{\sqrt{|J|\,n}}\,
\sum_{h\in J}
\,\sqrt{\sum_{i\in L}\beta_h(y_i)^2}.
$$
\end{lemma}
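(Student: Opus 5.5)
We will argue through the zero-sum vectors $\gamma^\sigma:=\sum_{i\in L}\sigma_i\beta(y_i)-\beta^\sigma$. By construction $A\gamma^\sigma=\sum_i\sigma_i y_i-\sum_i\sigma_i y_i=0$. Hence, by Lemma~\ref{PN:incompcomb}, each nonzero $\gamma^\sigma/\|\gamma^\sigma\|_2$ is $(\delta,\rho)$--incompressible, where $\delta=\delta_{\text{\tiny\ref{PN:incompcomb}}}$ and $\rho=\rho_{\text{\tiny\ref{PN:incompcomb}}}$. Write $s_h:=\big(\sum_{i\in L}\beta_h(y_i)^2\big)^{1/2}$ and $S:=\sum_{h\in J}s_h$.

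The plan is first to show that $\gamma^\sigma$ carries substantial mass on $J$ for many $\sigma$. By Khintchine's inequality, $\Exp_\sigma\big|\sum_i\sigma_i\beta_h(y_i)\big|\ge s_h/\sqrt2$. Therefore
$$\Exp_\sigma\|\gamma^\sigma_J\|_1\ge S/\sqrt2-\Exp_\sigma\|\beta^\sigma\|_1\ge (1/\sqrt2-c)S,$$
using \eqref{eq: 315aslkfnl} with $c=c_{\text{\tiny\ref{lem:generalPNbasic}}}$ small. We also need an upper control. Since $\|\gamma_J^\sigma\|_1\le\sum_{h\in J}|\sum_i\sigma_i\beta_h(y_i)|+\|\beta^\sigma\|_1$, its second moment is $O(S^2)$; the term $\Exp\|\beta^\sigma\|_1^2$ may need a Kahane-type argument or an alternative truncation, but first moments should suffice if we use Markov's inequality instead. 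By Paley--Zygmund (or a simple averaging), we obtain a set of $\sigma$ of probability at least $c_1$ on which $\|\gamma^\sigma_J\|_1\ge c_2S$. For such $\sigma$, Cauchy--Schwarz gives $\|\gamma^\sigma_J\|_2\ge c_2S/\sqrt{|J|}$.

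Next we transfer this mass off $J$ using incompressibility. Since $|J|\le c\,n\le \delta N/2$, incompressibility gives $\|\gamma^\sigma_{J^c}\|_2\ge\rho\|\gamma^\sigma\|_2\ge \rho c_2 S/\sqrt{|J|}$. We then apply it in the spread form: removing the $\delta N/2$ largest coordinates of $\gamma^\sigma_{J^c}$ still leaves $\ell_2$ mass $\gtrsim \rho\|\gamma^\sigma\|_2$, and those remaining coordinates are each at most $\|\gamma^\sigma\|_2/\sqrt{\delta N/2}$. A standard counting argument then yields at least $c'N$ indices $j\in J^c$ with $|\gamma^\sigma_j|\ge c''\|\gamma^\sigma\|_2/\sqrt N\ge c''' S/\sqrt{|J|n}$. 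Call this set $G_\sigma$.

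Finally we pass from $\gamma^\sigma_j$ back to $s_j$, and this is the main obstacle. We have $|\gamma^\sigma_j|\le|\sum_i\sigma_i\beta_j(y_i)|+|\beta^\sigma_j|$. Consider an index $j$ with $s_j<\epsilon S/\sqrt{|J|n}$. For the first term, Hoeffding gives $|\sum_i\sigma_i\beta_j(y_i)|\le \lambda s_j$ except with probability $2e^{-\lambda^2/2}$, so for most $\sigma$ the first term is negligible at such $j$. The $\beta^\sigma_j$ part must be controlled globally. Since $\sum_j|\beta^\sigma_j|=\|\beta^\sigma\|_1$, the number of $j$ with $|\beta^\sigma_j|\ge c'''S/(2\sqrt{|J|n})$ is at most $2\|\beta^\sigma\|_1\sqrt{|J|n}/(c'''S)$. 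Restricting to $\sigma$ with $\|\beta^\sigma\|_1\le C\Exp\|\beta^\sigma\|_1\le C c S$, this count is at most $O(c\sqrt{|J|n})\le O(c^{3/2}n)$, which is much smaller than $c'N$ once $c$ is small. This is where both the smallness of $|J|$ and hypothesis \eqref{eq: 315aslkfnl} enter.

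To conclude, we average over $\sigma$ in the good set, which has probability $\ge c_1/2$ after intersecting with the Markov event. Suppose fewer than $c\,n$ indices in $J^c$ had large $s_j$. Then, in expectation over $\sigma$, the number of indices $j\in G_\sigma$ with small $s_j$ at which $|\sum_i\sigma_i\beta_j(y_i)|$ exceeds $\lambda s_j$ is at most $2e^{-\lambda^2/2}N$; choosing $\epsilon\lambda\le c'''/2$, that term is $O(e^{-\lambda^2/2}N)$. Together with the $O(c^{3/2}n)$ bound on the $\beta^\sigma$ contributions and the $c\,n$ large-$s_j$ indices, these would cover only a small fraction of $c'N$, contradicting $|G_\sigma|\ge c'N$ for some good $\sigma$. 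Hence at least $c\,n$ indices $j\in J^c$ satisfy $s_j\ge\epsilon S/\sqrt{|J|n}$. Adjusting constants gives the claim.
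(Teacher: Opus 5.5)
Your proof is correct. Its first half matches the paper's argument, and the second half takes a different route.

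\textbf{Shared part.} The paper also works with $\gamma^\sigma=\sum_{i\in L}\sigma_i\beta(y_i)-\beta^\sigma$. It gets $\|\gamma^\sigma_J\|_1\gtrsim S$ on an event of constant probability by applying Paley--Zygmund to $Y=\sum_{h\in J}|\sum_i\sigma_i\beta_h(y_i)|$ and Markov to $\|\beta^\sigma\|_1$. Here $\|Y\|_{L^2}\le S$ by Minkowski. This is the clean way to resolve the hedge in your second paragraph, because no second moment of $\|\beta^\sigma\|_1$ is needed. Kahane--Khintchine would also supply that moment, since $\|\beta^\sigma\|_1$ is the $P_{N,n}$-norm of a Rademacher sum.

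\textbf{Paper's second half.} The paper argues at the level of $\ell_2$ mass. It assumes the set $J'$ of large-$s_j$ indices has fewer than $cn$ elements. It then adds the random set $\tilde J$ where $|\beta^\sigma_j|\ge S/n$, whose expected size is at most $cn$. With $J_0=J\cup\tilde J\cup J'$, it bounds $\Exp_\sigma\|\gamma^\sigma_{J_0^c}\|_2$ by $\sqrt{\sum_{j\notin J\cup J'}s_j^2}+\sqrt N\,S/n\lesssim\sqrt{cN}\,S/\sqrt{|J|n}$. For a suitable $\sigma$ this makes $\gamma^\sigma/\|\gamma^\sigma\|_2$ $\rho$-close to a $|J_0|$-sparse vector, which directly contradicts Lemma~\ref{PN:incompcomb}.

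\textbf{Your second half.} You instead use the spread property of incompressible vectors: at least $c'N$ coordinates have size $\gtrsim\|\gamma^\sigma\|_2/\sqrt N$. You then account for these coordinates one at a time, using Hoeffding tails for $\sum_i\sigma_i\beta_j(y_i)$ and an $\ell_1$ count for $\beta^\sigma$ at level $\sim S/\sqrt{|J|n}$.

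\textbf{What each buys.} The paper's route is shorter and uses only second moments of the Rademacher sums. Yours needs subgaussian tails, the spread lemma, and an extra averaging step. In that step, the expected number of Hoeffding-exceptional coordinates must be divided by the probability of the good event; this only affects the choice of $\lambda$. In exchange, your route shows that the threshold $S/\sqrt{|J|n}$ is exactly the spread level of $\gamma^\sigma$. Your $\epsilon=c'''/(2\lambda)$ does not depend on $c$, so taking the final constant to be $\min(c,\epsilon)$ closes the argument as you indicate.
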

As was discussed in the proof overview within the introduction,
the above statement is viewed as a property of the vector $(\sqrt{\sum_{i\in L}\beta_h(y_i)^2})_{h\in[N]}$
akin to incompressibility, although somewhat weaker.
The proof below relies, in its core, on the (proper) incompressibility of
the differences $\sum_{i\in L}\sigma_i\,\beta(y_i)-\beta^\sigma$, guaranteed by Lemma~\ref{PN:incompcomb}.
\begin{proof}[Proof of Lemma~\ref{lem:generalPNbasic}]
    We will assume that $c_{\text{\tiny\ref{lem:generalPNbasic}}}
    =c_{\text{\tiny\ref{lem:generalPNbasic}}}(K,K')$
    is a sufficiently small positive constant.
    For the rest of the proof, we condition 
    on any realization of $X_1,\dots,X_N$ from
    $\Omega_{\text{\tiny\ref{lem:sparse_singular_values}}}$.
    Denote by $J'$ the (non-random) collection of all
    $j\in J^c=[N]\setminus J$ such that
    $$
    \sqrt{\sum_{i\in L}\beta_j(y_i)^2}
    \geq 
    \frac{c_{\text{\tiny\ref{lem:generalPNbasic}}}}{\sqrt{|J|\,n}}\,
    \sum_{h\in J}\sqrt{\sum_{i\in L}\beta_h(y_i)^2}
    $$
    The rest of our argument is by contradiction:
    we assume that $|J'|<c_{\text{\tiny\ref{lem:generalPNbasic}}}\,n$.
    For every $i\in L$, we write
    $$
    y_i=y_i'+y_i'',
    $$
    where $y_i':=\sum_{j=1}^N \beta_j(y_i)\,
    {\bf 1}_{\{j\notin J\}}\,X_j$
    and 
    $y_i'':=\sum_{j=1}^N \beta_j(y_i)\,
    {\bf 1}_{\{j\in J\}}\,X_j$.
    We have
    \begin{align*}
    \Big\|
    \Big(\sum_{i\in L}\sigma_i\,\beta_j(y_i)-\beta^\sigma_j\Big)_{j\in J}\Big\|_1
    \geq 
    \sum_{j\in J}\Big|\sum_{i\in L}\sigma_i\,\beta_j(y_i)\Big|
    -\sum_{j\in J}|\beta^\sigma_j|
    \geq 
    \sum_{j\in J}\Big|\sum_{i\in L}\sigma_i\,\beta_j(y_i)\Big|
    - \|\beta^\sigma\|_1.
    \end{align*}
    Standard anti-concentration estimates
    (for example, the Paley--Zygmund inequality)
    imply that for a universal constant
    $\tilde c\in(0,1)$,
    $$
    \Prob_\sigma\bigg\{
    \sum_{j\in J}\Big|\sum_{i\in L}\sigma_i\,\beta_j(y_i)\Big|
    \geq 
    \tilde c\,\sum_{j\in J}\sqrt{\sum_{i\in L}\beta_j(y_i)^2}
    \bigg\}
    \geq \tilde c.
    $$
    On the other hand, by the assumptions on $\Exp_{\sigma}\,\|\beta^\sigma\|_1$
    and by Markov's inequality,
    $$
    \Prob_\sigma\bigg\{
    \|\beta^\sigma\|_1\leq 2(\tilde c)^{-1}\;
c_{\text{\tiny\ref{lem:generalPNbasic}}}\,
\sum_{j\in J}\sqrt{\sum_{i\in L}\beta_j(y_i)^2}
    \bigg\}\geq 1-\frac{\tilde c}{2}.
    $$
We can (and will) assume 
that $2(\tilde c)^{-1}\,c_{\text{\tiny\ref{lem:generalPNbasic}}}
\leq \frac{\tilde c}{2}$.
Then from the above we get
$$
\Prob_\sigma\bigg\{
\Big\|
\Big(\sum_{i\in L}\sigma_i\,\beta_j(y_i)-\beta^\sigma_j\Big)_{j\in J}\Big\|_1
\geq 
\frac{\tilde c}{2}\,\sum_{j\in J}\sqrt{\sum_{i\in L}\beta_j(y_i)^2}
\bigg\}
\geq \frac{\tilde c}{2},
$$
implying
    \begin{equation*}
\Prob_\sigma\bigg\{
\Big\|
\Big(\sum_{i\in L}\sigma_i\,\beta_j(y_i)-\beta^\sigma_j\Big)_{j\in J}\Big\|_2
\geq 
\frac{\tilde c}{2\sqrt{|J|}}\,\sum_{j\in J}\sqrt{\sum_{i\in L}\beta_j(y_i)^2}
\bigg\}
\geq \frac{\tilde c}{2}.
    \end{equation*}
    Further, we note that for any choice of parameter $\tau>0$,
    $$
    \tau\,\Exp_\sigma\,\big|\big\{j\leq N:\;|\beta^\sigma_j|\geq \tau\big\}\big|
    \leq \Exp_{\sigma}\,\|\beta^\sigma\|_1
    \leq
    c_{\text{\tiny\ref{lem:generalPNbasic}}}\,
    \sum_{j\in J}\sqrt{\sum_{i\in L}\beta_j(y_i)^2}
    $$
    Hence, denoting by $\tilde J$ the (random)
    collection of all indices
    $j\in [N]\setminus J$ such that
    $$
    |\beta^\sigma_j|\geq
    \frac{1}{n}\,
    \sum_{h\in J}\sqrt{\sum_{i\in L}\beta_h(y_i)^2},
    $$
    we obtain
    $$
    \Exp_\sigma\,|\tilde J|
    \leq c_{\text{\tiny\ref{lem:generalPNbasic}}}\,n,
    $$
    while at the same time for all realizations of $\sigma$,
    $$
    \Big\|\big(\beta^\sigma_j\big)_{
    j\in [N]\setminus(\tilde J\cup J)}\Big\|_2
    \leq
    \frac{\sqrt{N}}{n}\,
    \sum_{h\in J}\sqrt{\sum_{i\in L}\beta_h(y_i)^2}.
    $$
    Define $J_0:=J\cup \tilde J\cup J'$.
    Then, by the above,
    \begin{equation}\label{eq:J0upperbound}
    \Exp_\sigma\,|J_0|\leq 3c_{\text{\tiny\ref{lem:generalPNbasic}}}\,n,
    \end{equation}
    whereas
    \begin{equation}\label{eq:probtildec}
    \Prob_\sigma\bigg\{
\Big\|
\Big(\sum_{i\in L}\sigma_i\,\beta_j(y_i)-\beta^\sigma_j\Big)_{j\in J_0}\Big\|_2
\geq 
\frac{\tilde c}{2\sqrt{|J|}}\,
\sum_{h\in J}\sqrt{\sum_{i\in L}\beta_h(y_i)^2}
\bigg\}
\geq \frac{\tilde c}{2},
    \end{equation}
    and
    \begin{align}
    \Exp_{\sigma}\,\Big\|
    \Big(\sum_{i\in L}\sigma_i\,\beta_j(y_i)-\beta^\sigma_j\Big)_{j\in J_0^c}\Big\|_2
    &\leq 
    \Exp_{\sigma}\,\Big\|
    \Big(\sum_{i\in L}\sigma_i\,\beta_j(y_i)\Big)_{j\in J_0^c}\Big\|_2
    +
    \Exp_{\sigma}\,\Big\|
    \big(\beta^\sigma_j\big)_{j\in J_0^c}\Big\|_2\nonumber\\
    &\leq
    \sqrt{\Exp_{\sigma}\,\Big\|
    \Big(\sum_{i\in L}\sigma_i\,\beta_j(y_i)\Big)_{j\in J_0^c}\Big\|_2^2}
    +
    \frac{\sqrt{N}}{n}\,
    \sum_{h\in J}\sqrt{\sum_{i\in L}\beta_h(y_i)^2}\nonumber\\
    &\leq
    \sqrt{\sum_{j\in [N]\setminus(J\cup J')}\sum_{i\in L}\beta_j(y_i)^2}
    +
    \frac{\sqrt{N}}{n}\,
    \sum_{h\in J}\sqrt{\sum_{i\in L}\beta_h(y_i)^2}\nonumber\\
    &\leq
    \frac{c_{\text{\tiny\ref{lem:generalPNbasic}}}\,\sqrt{N}}{\sqrt{|J|\,n}}\,
    \sum_{h\in J}\sqrt{\sum_{i\in L}\beta_h(y_i)^2}
    +
    \frac{\sqrt{N}}{n}\,
    \sum_{h\in J}\sqrt{\sum_{i\in L}\beta_h(y_i)^2}\nonumber\\
    &\leq
    \frac{2\,\sqrt{c_{\text{\tiny\ref{lem:generalPNbasic}}}\,N}}{\sqrt{|J|\,n}}\,
    \sum_{h\in J}\sqrt{\sum_{i\in L}\beta_h(y_i)^2},\label{eq:J0cbound}
    \end{align}
    where we used that $|J|\leq c_{\text{\tiny\ref{lem:generalPNbasic}}}\,n$.

Combining \eqref{eq:J0upperbound}, \eqref{eq:probtildec},
and \eqref{eq:J0cbound} with Markov's inequality,
we get that for some realization of $\sigma$,
\begin{align*}
&\Big\|
\Big(\sum_{i\in L}\sigma_i\,\beta_j(y_i)-\beta^\sigma_j\Big)_{j\in J_0}\Big\|_2
\geq 
\frac{\tilde c}{2\sqrt{|J|}}\,\sum_{h\in J}\sqrt{\sum_{i\in L}\beta_h(y_i)^2}
>0
;\\
&|J_0|\leq \frac{24c_{\text{\tiny\ref{lem:generalPNbasic}}}\,n}{\tilde c};\\
&\Big\|
    \Big(\sum_{i\in L}\sigma_i\,\beta_j(y_i)-\beta^\sigma_j\Big)_{j\in J_0^c}\Big\|_2
\leq
\frac{16\sqrt{c_{\text{\tiny\ref{lem:generalPNbasic}}}\,N}}{\tilde c\,\sqrt{|J|\,n}}\,
    \sum_{h\in J}\sqrt{\sum_{i\in L}\beta_h(y_i)^2}.
\end{align*}
As a final element of the proof,
we apply Lemma~\ref{PN:incompcomb}.
Note that, as long as $c_{\text{\tiny\ref{lem:generalPNbasic}}}$
is sufficiently small, we have
$$
\frac{24c_{\text{\tiny\ref{lem:generalPNbasic}}}}{\tilde c}
<\delta_{\text{\tiny\ref{PN:incompcomb}}},
$$
whereas
$$
\rho_{\text{\tiny\ref{PN:incompcomb}}}\cdot
\frac{\tilde c}{2}>\frac{16\sqrt{c_{\text{\tiny\ref{lem:generalPNbasic}}}\,N}}{\tilde c\,\sqrt{n}}.
$$
Thus, the Euclidean normalization of the vector 
$
\big(\sum_{i\in L}\sigma_i\,\beta_j(y_i)-\beta^\sigma_j\big)_{j\in [N]}
$
is $(\delta_{\text{\tiny\ref{PN:incompcomb}}},
\rho_{\text{\tiny\ref{PN:incompcomb}}})$--compress\-ible,
leading to contradiction in view of Lemma~\ref{PN:incompcomb}.
The result follows.
\end{proof}

As the first application of the above lemma,
we consider the setting where vectors $y_i$, $i\in L$, in $(\R^n,\|\cdot\|_{P_{N,n}})$
satisfy
$
m_\delta(y_i,r)\leq |L|^{\alpha}\,2^{-2r}\,n,
$
for all $0\leq r\leq \log_2 \sqrt{|L|}$ and a small parameter $\alpha>0$,
and $y_i$'s induce a low-distortion embedding of $\ell_\infty^{|L|}$
into $(\R^n,\|\cdot\|_{P_{N,n}})$.
In the end, our goal is to show that for a typical realization of $P_{N,n}$
such vectors do not exist.
As the main step in verifying the assertion, in the next lemma we show
that for a significant
fraction of these vectors, one can define ``truncations'' $\tilde y_i$
having roughly comparable $\|\cdot\|_{P_{N,n}}$ and $\|\cdot\|_2$--norms
and also producing a low-distortion embedding of $\ell_\infty$.
Thus, we transfer the problem back to the setting of Proposition~\ref{prop:spansofcomp}
and ultimately arrive at contradiction at the end of this section, when completing
the proof of Theorem~\ref{th: Gluskinlinfty}.

\begin{lemma}\label{lem:stairs}
For every admissible choice of $K,K'$
there is a constant
$C_{\text{\tiny\ref{lem:stairs}}}\geq 2$
depending only on $K,K'$,
and a {\it universal constant} $c_{\text{\tiny\ref{lem:stairs}}}>0$
with the following property.
Assume that $n\geq c_{\text{\tiny\ref{lem:sparse_singular_values}}}^{-1}$
and $K'\geq \frac{N}{n}\geq K$.
Condition on the event
$\Omega_{\text{\tiny\ref{lem:sparse_singular_values}}}$.
Let $L$ be a finite index set
with $|L|\geq C_{\text{\tiny\ref{lem:stairs}}}$,
let $\delta\in(0,1]$ be a parameter,
and let $y_i$, $i\in L$, be
unit vectors in $\R^n$ with $\|y_i\|_{P_{N,n}}\in [\delta,2\delta]$.
Assume that 
for every $y_i$, $i\in L$, and every integer
$0\leq r\leq \log_2 \sqrt{|L|}$,
$$
m_\delta(y_i,r)\leq |L|^{\alpha}\,2^{-2r}\,n,
$$
where $\alpha\in(0,c_{\text{\tiny\ref{lem:stairs}}}]$
is a parameter.
Assume further that for every 
choice of numbers $v=(v_i)_{i\in L}$,
$$
\Big\|\sum_{i\in L}v_i\,y_i\Big\|_{P_{N,n}}
\leq \delta\,\|v\|_\infty\,|L|^{\alpha}.
$$
Then there is a collection $\tilde y_i$, $i\in U$,
of vectors of unit Euclidean length
with $|U|=\big\lfloor |L|^{1/16}\big\rfloor$ such that for every $\tilde y_i$,
\begin{itemize}
    \item $\|\tilde y_i\|_{P_{N,n}}\geq
    C_{\text{\tiny\ref{lem:stairs}}}^{-1}\;|L|^{-\alpha/2}\,
    (\log |L|)^{-1}$;
    \item For every choice of signs $\sigma_i$, $i\in U$, $\big\|
    \sum_{i\in U}\sigma_i\,\tilde y_i
    \big\|_{P_{N,n}}\leq C_{\text{\tiny\ref{lem:stairs}}}\,|L|^{\alpha}$.
\end{itemize}
\end{lemma}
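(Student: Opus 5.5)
The plan follows the outline in the introduction: truncate the coefficient vectors $\beta(y_i)$ at the level $\delta\sqrt{|L|}/n$, and show that for most $i$ the removed ``spiky'' part carries little $\ell_1$-mass. The spike control comes from Lemma~\ref{lem:generalPNbasic}. Set $\ell:=|L|$ and $\tau:=\delta\sqrt{\ell}/n$. For each $i\in L$ split $\beta(y_i)=\beta^{\le}(y_i)+\beta^{>}(y_i)$ according to whether $|\beta_j(y_i)|\le\tau$. Put $y_i':=A\beta^{\le}(y_i)$ and $y_i'':=A\beta^{>}(y_i)$.

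\textbf{Step 1: few vectors are spiky.} Call $i$ \emph{spiky} if $\|\beta^{>}(y_i)\|_1>\ell^{-1/16}\delta$. I will show that at most $\ell/2$ indices are spiky, arguing by contradiction. Suppose a set $L_0$ of more than $\ell/2$ indices is spiky. The support of $\beta^{>}(y_i)$ has size at most $2\delta/\tau=2n/\sqrt{\ell}$. Consider the union of these supports over $L_0$. By pigeonholing over the dyadic levels $r>\log_2\sqrt{\ell}$ of $|\beta_j(y_i)|$, I will extract a set $J$ with $|J|\le c_{\tref{lem:generalPNbasic}}n$ on which $\sum_{h\in J}\sqrt{\sum_{i\in L_0}\beta_h(y_i)^2}$ is large. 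A natural choice is to take $J$ to be the indices $h$ carrying the largest values of $\sqrt{\sum_i\beta_h(y_i)^2}$, with $|J|\approx n\ell^{-\gamma}$ for a small $\gamma$. The lower bound should be of order $\ell^{1/2-1/16-O(\alpha)}\delta$. To get it, note that the total $\ell_1$-mass of spikes is at least $|L_0|\,\ell^{-1/16}\delta$. Each spike entry exceeds $\tau$, and a quadratic-vs-linear (Cauchy--Schwarz) comparison across $i$ then converts this into the bound.

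The expectation side is controlled by the Lipschitz hypothesis: $\Exp_\sigma\|\beta^\sigma\|_1\le\delta\ell^{\alpha}$. So \eqref{eq: 315aslkfnl} holds once $\ell^{1/2-1/16-O(\alpha)}\gg\ell^{\alpha}$, which is true for $\alpha\le c_{\tref{lem:stairs}}$ and $\ell$ large. Lemma~\ref{lem:generalPNbasic} then produces at least $c n$ indices $j\in J^c$ with
\[
\sqrt{\sum_i\beta_j(y_i)^2}\gtrsim \frac{\ell^{1/2-1/16-O(\alpha)}\delta}{\sqrt{|J|n}}.
\]
With $|J|\approx n\ell^{-\gamma}$, the right-hand side is of order $\ell^{1/2-1/16+\gamma/2-O(\alpha)}\delta/n$. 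This says that $cn$ coordinates have an average squared coefficient (over $i$) at a dyadic level $2^{r}\delta/n$ that is too high. Summing $m_\delta(y_i,r)$ over $i$ and comparing with the hypothesis $m_\delta(y_i,r)\le\ell^\alpha 2^{-2r}n$ gives a contradiction; levels above $\sqrt{\ell}$ are bounded by the $\ell_1$ norm $2\delta$. The main obstacle is choosing $J$ and the bookkeeping between the $\ell_2$-over-$i$ aggregates and the per-$i$ counts $m_\delta$. If the direct choice fails, I will restrict to the $\sqrt{\ell}$ largest-contributing indices in $L_0$ and iterate over dyadic levels.

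\textbf{Step 2: properties of the truncations.} Take $U$ to be any $\lfloor\ell^{1/16}\rfloor$ non-spiky indices. For these, $\|y_i-y_i'\|_{P_{N,n}}\le\ell^{-1/16}\delta$, so $\|y_i'\|_{P_{N,n}}\ge\delta/2$, since the truncation can only lower the norm by the removed mass. For signs $\sigma$ on $U$,
\[
\Big\|\sum_{i\in U}\sigma_iy_i'\Big\|_{P_{N,n}}\le\delta\ell^\alpha+|U|\,\ell^{-1/16}\delta\le2\delta\ell^\alpha.
\]
For the Euclidean norm, $\|y_i'\|_2\le\|A\|\,\|\beta^{\le}(y_i)\|_2\le4\sqrt N\,\|\beta^{\le}(y_i)\|_2$, and
\[
\|\beta^{\le}(y_i)\|_2^2\le\sum_{r\le\log_2\sqrt\ell}m_\delta(y_i,r)\Big(\frac{2^r\delta}{n}\Big)^2\le\frac{\ell^{\alpha}\delta^2\log\ell}{n}.
\]
Hence $\|y_i'\|_2=O(\ell^{\alpha/2}\sqrt{\log\ell}\,\delta)$.

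\textbf{Step 3: normalize.} Set $\tilde y_i:=y_i'/\|y_i'\|_2$. Then
\[
\|\tilde y_i\|_{P_{N,n}}\ge\frac{\delta/2}{C\ell^{\alpha/2}\sqrt{\log\ell}\,\delta}\gtrsim \ell^{-\alpha/2}(\log\ell)^{-1}.
\]
For sign sums, the normalization produces varying weights $1/\|y_i'\|_2$. I handle these by norm-one-unconditionality (convexity in $v$): since the weights are bounded by $\max_i 1/\|y_i'\|_2$, I need a lower bound $\|y_i'\|_2\gtrsim\delta$. This follows from $\|y_i'\|_{P_{N,n}}\ge\delta/2$ and the in-radius bound of Lemma~\ref{PN:inradius}, which gives $\|y_i'\|_2\ge c_{\tref{PN:inradius}}\|y_i'\|_{P_{N,n}}$. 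This yields $\|\sum_{i\in U}\sigma_i\tilde y_i\|_{P_{N,n}}\le C\ell^\alpha$.
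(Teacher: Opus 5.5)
Your Steps~2--3 are essentially the paper's first subcase (summing squares even gives $\sqrt{\log\ell}$ in place of $\log\ell$). Step~1, however, has a genuine gap, and it is the part that carries the content of the lemma.

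You apply Lemma~\ref{lem:generalPNbasic} to the whole spiky family $L_0$, $|L_0|>\ell/2$, with $J$ the top $n\ell^{-\gamma}$ coordinates of the aggregate vector. The union of the spike supports over $L_0$ can have size up to $\min(N,|L_0|\cdot 2n/\sqrt{\ell})$, so no admissible $J$ contains it. The $c_{\tref{lem:generalPNbasic}}n$ indices $j\in J^c$ delivered by the lemma may therefore simply be spike coordinates of vectors in $L_0$, where the $m_\delta$ hypothesis says nothing.

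The remaining budgets are then far too weak. With your guaranteed $\sum_{h\in J}\sqrt{\sum_{i\in L_0}\beta_h(y_i)^2}\approx\ell^{7/16}\delta$, the lemma yields only about $\ell^{7/16+\gamma/2}\delta$ of $\ell_1$-mass $\sum_j\sum_{i\in L_0}|\beta_j(y_i)|$ on those indices, or about $\ell^{7/8+\gamma}\delta^2/n$ of squared mass. By contrast:
\begin{itemize}
\item $\sum_{i\in L_0}\|\beta(y_i)\|_1\ge\ell\delta/2$;
\item the sub-threshold squared mass alone may be $\ell^{1+\alpha}\log\ell\,\delta^2/n$;
\item spike entries may carry squared mass up to $4\delta^2$ per vector.
\end{itemize}
Concretely, if the spike supports are spread evenly over $[N]$, each $j\in J^c$ is a spike coordinate for $\approx\ell^{7/16}$ of the vectors and satisfies the lemma's conclusion with room to spare, so nothing can be extracted. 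Your fallback to $\sqrt{\ell}$ vectors has the same defect: their spike supports can already cover about $2n>c_{\tref{lem:generalPNbasic}}n$ coordinates, and their $\ell_1$-budget $2\sqrt{\ell}\,\delta$ dwarfs the $\approx\ell^{3/16}\delta$ the lemma can yield (see below).

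The missing idea is to subsample to a family whose size lies in a narrow polynomial window. The paper takes $U_2\subset L_0$ with $|U_2|=\lfloor\ell^{1/8+3c_{\tref{lem:stairs}}}\rfloor$ and lets $J$ be the \emph{entire} union of the spike supports over $U_2$. Then $|J|\le4n|U_2|/\sqrt{\ell}\le c_{\tref{lem:generalPNbasic}}n$. The argument runs as follows:
\begin{itemize}
\item Cauchy--Schwarz gives $\sum_{h\in J}\sqrt{\sum_{i\in U_2}\beta_h(y_i)^2}\ge\ell^{-1/16}\delta\sqrt{|U_2|}$.
\item This beats $\Exp_\sigma\|\beta^\sigma\|_1\le\delta\ell^\alpha$ once $|U_2|\gg\ell^{1/8+2\alpha}$, so the lemma applies.
\item The lemma gives $c_{\tref{lem:generalPNbasic}}n$ indices $j\in J^c$ with $\sum_{i\in U_2}|\beta_j(y_i)|\gtrsim\ell^{-1/16}\ell^{1/4}\delta/n$. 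This bound is independent of $|U_2|$ because $|J|$ grows proportionally to $|U_2|$.
\item Hence the $\ell_1$-mass on $J^c$ is $\gtrsim\ell^{3/16}\delta$.
\item On the other hand, it is at most $2|U_2|\ell^{\alpha}\delta$. The paper uses the $m_\delta$ hypothesis here; the trivial bound $\|\beta(y_i)\|_1\le2\delta$ would do as well.
\end{itemize}
This is a contradiction precisely when $|U_2|\ll\ell^{3/16-\alpha}$. So the window $\ell^{1/8+2\alpha}\ll|U_2|\ll\ell^{3/16-\alpha}$, not the choice of $J$ among the top aggregates, is what closes the argument.

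A smaller fix: truncate at $\delta2^{\lfloor\log_2\sqrt{\ell}\rfloor}/n$ rather than at $\delta\sqrt{\ell}/n$. Coefficients in between sit at level $\lfloor\log_2\sqrt{\ell}\rfloor+1$, which the $m_\delta$ hypothesis does not cover. They could contribute order $\sqrt{\ell}\,\delta^2/n$ to $\|\beta^{\le}(y_i)\|_2^2$, spoiling your bound $\|y_i'\|_2=O(\ell^{\alpha/2}\sqrt{\log\ell}\,\delta)$.
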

\begin{proof}
We will assume that $C_{\text{\tiny\ref{lem:stairs}}}\geq 2$
is large. We do not attempt to optimize
the choice of $c_{\text{\tiny\ref{lem:stairs}}}$,
and can take $c_{\text{\tiny\ref{lem:stairs}}}:=0.01$
in the argument below.
Denote $\tilde r:=\lfloor \log_2\sqrt{|L|}\rfloor$,
and take $\varepsilon:=|L|^{-1/16}$.
Consider two subcases.
\begin{itemize}
    \item[1.] For at least $|L|/2$ vectors $y_i$,
    $$
    \sum_{j=1}^N |\beta_j(y_i)|\,
    {\bf 1}_{\{|\beta_j(y_i)|>\delta\,2^{\tilde r}/n\}}
    \leq \varepsilon\,\delta.
    $$
    Denote the corresponding set of indices by $L_1\subset L$.
    For every $i\in L_1$, define
    $$
    y_i':=\sum\limits_{j=1}^N \beta_j(y_i)\,
    {\bf 1}_{\{|\beta_j(y_i)|\leq \delta\,2^{\tilde r}/n\}}\,X_j.
    $$
    Note that
    \begin{equation}\label{eq:leqepsdelta}
    \|y_i-y_i'\|_{P_{N,n}}\leq \varepsilon\,\delta,
    \end{equation}
    implying
    \begin{equation}\label{eq:ell2ofyiprime}
    \|y_i'\|_{P_{N,n}}=\Omega(\delta),\quad
    \|y_i'\|_2=\Omega(\delta),
    \end{equation}
    where the latter relation follows from the 
    former by applying the definition of
    $\Omega_{\text{\tiny\ref{PN:inradius}}}$
    from the ``in-radius'' Lemma~\ref{PN:inradius},
    and where the implicit constants depend only on $K,K'$.
    Further, we have, in view of conditioning on 
    $\Omega_{\text{\tiny\ref{lem:sparse_singular_values}}}$,
    \begin{align*}
    \|y_i'\|_2
    &=O\bigg(\sqrt{n}\;\Big\|\Big(\beta_j(y_i)\,
    {\bf 1}_{\{|\beta_j(y_i)|\leq \delta\,2^{\tilde r}/n\}}\Big)_{j\in[N]}\Big\|_2\bigg)\\
    &=O\bigg(\sqrt{n}\;\sum\limits_{r=0}^{\tilde r}
    \sqrt{m_\delta(y_i,r)}\,\frac{\delta}{n}\,2^{r}
    \bigg)\\
    &= O\bigg(\sum\limits_{r=0}^{\tilde r}
    \sqrt{|L|^{\alpha}\,2^{-2r}\,n}\;
    \frac{\delta}{n}\,2^{r}\,\sqrt{n}\bigg)\\
    &=O\big(|L|^{\alpha/2}
    \,\delta\,\tilde r\big),
    \end{align*}
    where the implicit constants may only depend on $K,K'$.
    Renormalizing, we obtain a collection of {\it unit}
    vectors $\tilde y_i=\frac{y_i'}{\|y_i'\|_2}$, $i\in L_1$, such that
    $$
    \|\tilde y_i\|_{P_{N,n}}
    =\Omega\big(|L|^{-\alpha/2}\,{\tilde r}^{-1}\big).
    $$
    Further, using \eqref{eq:leqepsdelta} and \eqref{eq:ell2ofyiprime},
    as well as the assumptions of the lemma,
    for every non-empty subset $U_1\subset L_1$
    and every assignment of signs $\sigma_i$, $i\in U_1$,
    \begin{align*}
    \Big\|
    \sum_{i\in U_1}\sigma_i\,\tilde y_i
    \Big\|_{P_{N,n}}
    &\leq
    \Big\|
    \sum_{i\in U_1}\sigma_i\,\Big(\frac{y_i'}{\|y_i'\|_2}
    -
    \frac{y_i}{\|y_i'\|_2}
    \Big)
    \Big\|_{P_{N,n}}
    +
    \Big\|
    \sum_{i\in U_1}\sigma_i\,\frac{y_i}{\|y_i'\|_2}
    \Big\|_{P_{N,n}}\\
    &\leq 
    \sum_{i\in U_1} \frac{1}{\|y_i'\|_2}\,\|y_i'-y_i\|_{P_{N,n}}
    +\delta\,|L|^{\alpha}\,\Big\|\Big(\frac{\sigma_i}{\|y_i'\|_2}\Big)_{i\in U_1}\Big\|_{\infty}
    \\
    &\leq O\big(\varepsilon\,|U_1|\big)+
    O\big(|L|^{\alpha}\big).
    \end{align*}
    Taking $U_1$ to be any subset of $L_1$
    of size $\big\lfloor|L|^{1/16}\big\rfloor$,
    we get the result.

    \item[2.] For at least $|L|/2$ vectors $y_i$,
    $$
    \sum_{j=1}^N |\beta_j(y_i)|\,
    {\bf 1}_{\{|\beta_j(y_i)|>\delta\,2^{\tilde r}/n\}}
    > \varepsilon\,\delta.
    $$
    Denote the corresponding set of indices by $L_2$.
    Let $U_2\subset L_2$ be a subset of $L_2$
    of size 
    $|U_2|= \big\lfloor |L|^{1/8+3\,c_{\text{\tiny\ref{lem:stairs}}}}\big\rfloor$.
    Recall that by the assumptions of the lemma, for every
    choice of the signs $\sigma_i$, $i\in U_2$,
    \begin{equation}\label{eq: akjfnlakjfnlkfjn}
    \Big\|\sum_{i\in U_2}\sigma_i\,y_i\Big\|_{P_{N,n}}
    \leq \delta\,|L|^{\alpha}.
    \end{equation}
    Define $J$ to be the collection of all indices
    $j\in[N]$ such that
    $|\beta_j(y_i)|>\delta\,2^{\tilde r}/n$
    for some $i\in U_2$.
    Observe that, in view of the upper bound $\|y_i\|_{P_{N,n}}\leq 2\delta$,
    the assumption that the constant 
    $C_{\text{\tiny\ref{lem:stairs}}}$ is large,
    and the definition of $U_2$,
    \begin{equation}\label{eq:jhbfasafjnalkjnfkajn}
    |J|\leq |U_2|\cdot \frac{2n}{2^{\tilde r}}
    \leq 4n\,\frac{|U_2|}{\sqrt{|L|}}
    \leq c_{\text{\tiny\ref{lem:generalPNbasic}}}\,n.
    \end{equation}
    Further, by our assumption that $L$ is large,
    \begin{align*}
    c_{\text{\tiny\ref{lem:generalPNbasic}}}\,
    \sum_{h\in J}\sqrt{\sum_{i\in U_2}\beta_h(y_i)^2}
    &\geq
    \frac{c_{\text{\tiny\ref{lem:generalPNbasic}}}}{\sqrt{|U_2|}}\,
    \sum_{i\in U_2}\sum_{j\in J}|\beta_j(y_i)|\\
    &\geq c_{\text{\tiny\ref{lem:generalPNbasic}}}\,
    \varepsilon\,\delta\,\sqrt{|U_2|}\\
    &= c_{\text{\tiny\ref{lem:generalPNbasic}}}\,
    |L|^{-1/16}\,\delta\,\sqrt{|U_2|}
    \geq 
    \delta\,|L|^{\alpha},
    \end{align*}
    so, in view of the upper bound \eqref{eq: akjfnlakjfnlkfjn}, the assumption \eqref{eq: 315aslkfnl}
    from Lemma~\ref{lem:generalPNbasic} (with $L$ replaced by $U_2$) is satisfied.
    Applying Lemma~\ref{lem:generalPNbasic},
    we get that there must exist 
    at least $c_{\text{\tiny\ref{lem:generalPNbasic}}}\,n$
    indices $j\in J^c$ satisfying
    \begin{align}
    \sum_{i\in U_2}|\beta_j(y_i)|
    &\geq
    \sqrt{\sum_{i\in U_2}\beta_j(y_i)^2}
    \geq \frac{c_{\text{\tiny\ref{lem:generalPNbasic}}}}{\sqrt{|J|\,n}}\,
\sum_{h\in J}
\,\sqrt{\sum_{i\in U_2}\beta_h(y_i)^2}\nonumber\\
    &\geq 
    \frac{c_{\text{\tiny\ref{lem:generalPNbasic}}}}{\sqrt{|U_2|\,|J|\,n}}\,\sum_{i\in U_2}\sum_{h\in J}|\beta_h(y_i)|\nonumber\\
    &\geq
    \frac{c_{\text{\tiny\ref{lem:generalPNbasic}}}\,
    \varepsilon\,\delta\sqrt{|U_2|}}
    {\sqrt{|J|\,n}}\nonumber\\
    &\geq \frac{c_{\text{\tiny\ref{lem:generalPNbasic}}}\,
    \varepsilon\,\delta\,|L|^{1/4}}
    {2n},\label{eq:40958y2305938450398}
    \end{align}
    where we applied the first inequality from \eqref{eq:jhbfasafjnalkjnfkajn} to bound $|J|$.
   At the same time, in view of the assumptions
   of the lemma, for every $i\in U_2$,
   $$
   \sum_{j\in[N]\setminus J}
   |\beta_j(y_i)|
   \leq \sum\limits_{r=0}^{\tilde r}
    |L|^{\alpha}\,2^{-2r}\,n\;
    \frac{\delta}{n}\,2^{r}
    \leq 2\,|L|^{\alpha}\,\delta,
   $$
implying
$$
\sum_{j\in[N]\setminus J}
\sum_{i\in U_2}
   |\beta_j(y_i)|
\leq 2\,|U_2|\,|L|^{\alpha}\,\delta.
$$
Combining the last relation with \eqref{eq:40958y2305938450398},
we get
$$
2\,|U_2|\,|L|^{\alpha}\,\delta
\geq 
c_{\text{\tiny\ref{lem:generalPNbasic}}}\,n\cdot
\frac{c_{\text{\tiny\ref{lem:generalPNbasic}}}\,
    \varepsilon\,\delta\,|L|^{1/4}}
    {2n},
$$
leading to contradiction. Thus, the second subcase
is infeasible, and the result follows.
\end{itemize}
\end{proof}

Having considered the case of ``controlled decay'' of the parameters
$m_\delta(y_i,r)$, in the next two lemmas we treat the complimentary
case of ``spiky''
coefficients $\beta(y_i)$. Our goal is to show that vectors
satisfying such conditions cannot induce low-distortion embeddings of $\ell_\infty$.

\begin{lemma}[Preprocessing lemma]\label{lem:cleaning II}
Condition on any non-degenerate realization of $X_1,\dots,X_N$.
Let $|L|\geq 2$,
let $\alpha,\delta\in(0,1]$ and $\varepsilon\in(0,1/2]$
be parameters,
and let $y_i$, $i\in L$, be
unit vectors in $\R^n$ with $\|y_i\|_{P_{N,n}}\in [\delta,2\delta]$.
Assume that 
for every $y_i$, $i\in L$,
$$
\max\limits_{0\leq r\leq \log_2 \sqrt{|L|}}
\frac{m_\delta(y_i,r)}{2^{-2r}\,n}
>
|L|^{\alpha}.
$$
Then there is a collection $\tilde L\subset L$
of size at least
$c_{\text{\tiny\ref{lem:cleaning II}}}\,(\log|L|)^{-2}|L|$
(for some constant
$c_{\text{\tiny\ref{lem:cleaning II}}}>0$
which may only depend on $K'$)
such that for 
some integer $0\leq r\leq 3\log_2|L|+4$,
a real number
$$p\geq 
|L|^{\alpha-\varepsilon}\,2^{-2r}\,
2^{\max(0,r-\log_2\sqrt{|L|})}\,n,$$ 
and
every $i\in\tilde L$,
$$
m_\delta(y_i,r)\in[p,2p],
$$
and
\begin{align*}
&m_\delta(y_i,h)
< 2^{(-2+\varepsilon)(h-r)}\,m_\delta(y_i,r),
\quad
&0\leq h< r;\\
&m_\delta(y_i,h)
< 2^{-(h-r)/2}\,m_\delta(y_i,r),
\quad
&h> r.
\end{align*}
\end{lemma}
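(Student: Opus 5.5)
The plan is to find, for each $i\in L$ separately, a ``dominant'' level $r_i$ at which all the properties hold with some $p_i$, and then pigeonhole over $i$. Two facts about the profile $h\mapsto m_\delta(y_i,h)$ will be used throughout:
$$\sum_{h\geq 0} m_\delta(y_i,h)\leq N\leq K'n,\qquad m_\delta(y_i,h)\leq 4n\,2^{-h}\quad(h\geq1).$$
The second comes from $\sum_h m_\delta(y_i,h)\,\frac{\delta}{n}2^{h-1}\leq\|\beta(y_i)\|_1\leq 2\delta$. By hypothesis there is a starting level $r^{(0)}\leq\log_2\sqrt{|L|}$ with $m_\delta(y_i,r^{(0)})>|L|^{\alpha}2^{-2r^{(0)}}n$.

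\textbf{Step 1: greedy walk.} Starting from $r^{(0)}$, walk as follows. If the current level $r$ violates the left condition, i.e.\ some $h<r$ has $m_\delta(y_i,h)\geq 2^{(2-\varepsilon)(h-r)}m_\delta(y_i,r)$, move to the smallest such $h$. If $r$ violates the right condition, i.e.\ some $h>r$ has $m_\delta(y_i,h)\geq 2^{-(h-r)/2}m_\delta(y_i,r)$, move to the largest such $h$. Taking extreme violators should let each step clear the corresponding violation and make ties strict.

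\textbf{Step 2: termination via a potential.} A leftward move by $d$ multiplies $m$ by at least $2^{(2-\varepsilon)d}$. A rightward move by $d$ costs at most a factor $2^{-d/2}$. Hence the quantity $m_\delta(y_i,r)\,2^{2r}$ (equivalently $m_\delta(y_i,r)\,2^{(2-\varepsilon)r}$) never decreases much on left moves. Right moves are bounded overall, because $m\leq 4n2^{-h}$ caps how far right one can go while $m$ stays comparable. I would track the potential $\Phi(r)=m_\delta(y_i,r)\,2^{(2-\varepsilon)\min(r,\log_2\sqrt{|L|})}\,2^{-\max(0,\,r-\log_2\sqrt{|L|})}$. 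The goal is to show that every move increases $\Phi$ by a factor $\geq 2^{c\varepsilon}$ or more, up to the bounded number of ``crossing'' moves. Since $\Phi\leq O(n|L|^{O(1)})$ and $\Phi(r^{(0)})\geq |L|^{\alpha-\varepsilon/2}n$, the walk then terminates, and the terminal $r$ satisfies both decay conditions.

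\textbf{Step 3: lower bound and range of $r$.} At termination, the monotonicity of $\Phi$ gives
$$m_\delta(y_i,r)\geq |L|^{\alpha-\varepsilon}\,2^{-2r}\,2^{\max(0,\,r-\log_2\sqrt{|L|})}\,n.$$
This is exactly the required lower bound for $p$. Comparing it with $m_\delta(y_i,r)\leq 4n2^{-r}$ gives $2^{r}\lesssim |L|^{1-\alpha+\varepsilon}\cdot|L|^{1/2}$, so $r\leq 3\log_2|L|+4$. I expect Steps 1--3, namely choosing a potential that handles simultaneously the asymmetric decay rates ($2-\varepsilon$ on the left, $1/2$ on the right) and the kink at $\log_2\sqrt{|L|}$, to be the main obstacle. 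If the simple greedy walk fails, I would instead pick $r$ as a maximizer of a suitably chosen weight $m_\delta(y_i,h)\psi(h)$, or use a two-stage selection, and check the chain inequalities case by case.

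\textbf{Step 4: pigeonhole.} For each $i$ we now have $r_i\in[0,3\log_2|L|+4]$, which takes $O(\log|L|)$ values. The value $m_\delta(y_i,r_i)$ lies between $|L|^{-O(1)}n$ and $K'n$, so it falls into one of $O(\log|L|)$ dyadic intervals $[p,2p]$. Pigeonholing over the pairs $(r,p)$ yields $\tilde L$ with $|\tilde L|\geq c(\log|L|)^{-2}|L|$, where $c$ depends only on $K'$.
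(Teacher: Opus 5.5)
Your architecture (a per-$i$ walk to a level with no left or right violations, a potential for termination and for the lower bound, then pigeonholing over $(r,p)$) is the same as the paper's. However, the step you flag as the obstacle is not resolved, and the potential you commit to fails.

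With $s:=\log_2\sqrt{|L|}$ and $r\geq s$, your potential is $\Phi(r)=m_\delta(y_i,r)\,2^{(2-\varepsilon)s}\,2^{-(r-s)}$. A rightward move $r\to r+d$ only guarantees $m_\delta(y_i,r+d)\geq 2^{-d/2}m_\delta(y_i,r)$, hence only $\Phi(r+d)\geq 2^{-3d/2}\Phi(r)$. So $\Phi$ can drop on every right move beyond $s$. These are not ``crossing'' moves, so there is no bounded exceptional set, and the monotonicity you invoke for termination and for Step~3 is false.

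Step~3 also has an independent error. For $r\geq s$ the target bound reads $m_\delta(y_i,r)\geq|L|^{\alpha-\varepsilon}2^{-r-s}n$. Against $m_\delta(y_i,r)\leq 4n2^{-r}$ the factors $2^{-r}$ cancel, so no upper bound on $r$ can be extracted from it, and the claimed $2^r\lesssim|L|^{3/2-\alpha+\varepsilon}$ is unjustified. (In Step~1 the left-violation threshold should also be $2^{(2-\varepsilon)(r-h)}m_\delta(y_i,r)$; your Step~2 uses the correct one.)

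The missing observation is that the unkinked potential you mention in passing, $\Psi(r)=m_\delta(y_i,r)\,2^{(2-\varepsilon)r}$, already does everything; this is the paper's choice.

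\textbf{Monotonicity.} A left violation at $r$ is exactly $\Psi(h)\geq\Psi(r)$ for some $h<r$. A right violation gives $\Psi(h)\geq 2^{(3/2-\varepsilon)(h-r)}\Psi(r)\geq 2\Psi(r)$. So right moves are \emph{rewarded}, not costly.

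\textbf{Termination.} We have $\Psi(r^{(0)})>0$, and $\Psi$ is bounded because $m_\delta(y_i,h)=0$ once $2^{h-1}\geq 2n$ (as $|\beta_j(y_i)|\leq 2\delta$). Hence there are finitely many right moves, and finitely many left moves between them, so the walk terminates. At termination, the absence of violations is precisely the two strict decay conditions.

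\textbf{Lower bound.} The net displacement is at most the total rightward displacement, so $\Psi(\tilde r)\geq 2^{(3/2-\varepsilon)\max(0,\tilde r-r^{(0)})}\Psi(r^{(0)})$, i.e.
\[
m_\delta(y_i,\tilde r)>|L|^{\alpha}\,2^{-2\tilde r}\,n\;2^{(3/2-\varepsilon)\max(0,\tilde r-r^{(0)})}\;2^{\varepsilon(\tilde r-r^{(0)})}.
\]
For $\tilde r\geq r^{(0)}$ the right-hand side is $|L|^\alpha 2^{-2\tilde r}2^{\frac32(\tilde r-r^{(0)})}n$. This dominates the target because $r^{(0)}\leq s$, so the kink factor appears automatically. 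For $\tilde r<r^{(0)}\leq s$ you lose only $2^{-\varepsilon s}=|L|^{-\varepsilon/2}$.

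\textbf{Range of $r$.} Keep this stronger intermediate form rather than the lemma's stated bound. For $\tilde r>r^{(0)}$, comparing it with $4n2^{-\tilde r}$ gives $2^{\tilde r}\leq 16|L|^{-2\alpha}2^{3r^{(0)}}\leq 16|L|^{3/2}$.

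Step~4 then works, provided that for each fixed $r$ you start the dyadic grid at the target lower bound, so that $p$ itself satisfies it.
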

\begin{proof}
Fix for a moment any $i\in L$.
Consider an algorithm constructing a sequence of numbers
$r_0(i),r_1(i),\dots$ as follows.
Set $r_0(i)$ to be arbitrary $0\leq r\leq \log_2 \sqrt{|L|}$
such that $m_\delta(y_i,r)
>
|L|^{\alpha}\,2^{-2r}\,n$
(such a number exists by the assumptions of the lemma).
Further, given a number $r_\ell(i)$ obtained at the $\ell$--th
step of the algorithm, 
we let $r_{\ell+1}(i)$ be an arbitrary non-negative integer $r$
such that either
\begin{equation}\label{eq:aokjbfoaskfjnpfkajn}
\frac{m_\delta(y_i,r)}
{2^{(-2+\varepsilon)\,r}}
\geq \frac{m_\delta(y_i,r_\ell(i))}
{2^{(-2+\varepsilon)\,r_{\ell}(i)}}
\quad\mbox{and}\quad
0\leq r< r_\ell(i),    
\end{equation}
or
\begin{equation}\label{eq:alkdshjfbsofhjbsofjh}
\frac{m_\delta(y_i,r)}
{2^{-r/2}}
\geq
\frac{m_\delta(y_i,r_\ell(i))}{2^{-r_{\ell}(i)/2}}
\quad\mbox{and}\quad
r> r_\ell(i).
\end{equation}
Whenever such a number $r_{\ell+1}(i)$
cannot be found, we stop the process and define $\tilde r(i)$
to be the last element of the constructed sequence.

We claim that the algorithm described above, terminates,
and that the obtained index $\tilde r(i)$ satisfies
\begin{equation}\label{eq:ojhboajhbfoasjfbslj}
\begin{split}
m_\delta(y_i,\tilde r(i))
&>
|L|^{\alpha}\,2^{-2\tilde r(i)-\varepsilon\log_2\sqrt{|L|}}\,
2^{(3/2-\varepsilon)\max(0,\tilde r(i)-\log_2\sqrt{|L|})}\,n\\
&\geq 
|L|^{\alpha-\varepsilon}\,2^{-2\tilde r(i)}\,
2^{\max(0,\tilde r(i)-\log_2\sqrt{|L|})}\,n    
\end{split}
\end{equation}
and
\begin{equation}\label{eq:aoiubfoaskjfnslfk}
2^{\tilde r(i)}\leq 16\,|L|^{3-2\alpha}.
\end{equation}
Indeed, observe that, in view of 
\eqref{eq:aokjbfoaskfjnpfkajn} and
\eqref{eq:alkdshjfbsofhjbsofjh},
the ratios
$$
\frac{m_\delta(y_i,r_\ell(i))}{2^{-2r_\ell(i)+\varepsilon\,r_\ell(i)}},
\quad \ell=0,1,\dots,
$$
form a non-decreasing sequence,
and, moreover,
\begin{equation}\label{eq: oiyugoajlafkjnslfkjn}
\frac{m_\delta(y_i,r_{\ell+1}(i))}
{2^{-2r_{\ell+1}(i)+\varepsilon\,r_{\ell+1}(i)}}
\geq
2^{(3/2-\varepsilon)(r_{\ell+1}(i)-r_\ell(i))}\;
\frac{m_\delta(y_i,r_\ell(i))}{2^{-2r_\ell(i)+\varepsilon\,r_\ell(i)}},
\quad \mbox{whenever $r_{\ell+1}(i)>r_\ell(i)$}.
\end{equation}
Since
$$
\sup\limits_{r\geq 0}\frac{m_\delta(y_i,r)}
{2^{-2r+\varepsilon\,r}}<\infty,
$$
the above observation implies that the algorithm must terminate.
Moreover, in view of \eqref{eq: oiyugoajlafkjnslfkjn},
we have
$$
\frac{m_\delta(y_i,\tilde r(i))}
{2^{-2\tilde r(i)+\varepsilon\,\tilde r(i)}}
>
2^{(3/2-\varepsilon)\max(0,\tilde r(i)-r_0(i))}\;
\frac{m_\delta(y_i,r_0(i))}{2^{-2r_0(i)+\varepsilon\,r_0(i)}},
$$
and, in particular,
\begin{align}
m_\delta(y_i,\tilde r(i))
&>|L|^{\alpha}\,2^{-2r_0(i)}\,n\;
\frac{2^{(3/2-\varepsilon)\max(0,\tilde r(i)-r_0(i))}\;
2^{-2\tilde r(i)+\varepsilon\,\tilde r(i)}}
{2^{-2r_0(i)+\varepsilon\,r_0(i)}}\nonumber\\
&=
|L|^{\alpha}\,2^{-2\tilde r(i)}\,n\;
2^{(3/2-\varepsilon)\max(0,\tilde r(i)-r_0(i))}\;
2^{\varepsilon\,\tilde r(i)-\varepsilon\,r_0(i)}
\label{eq:ajhbkjfhbskfjhsb}\\
&\geq
|L|^{\alpha}\,2^{-2\tilde r(i)-\varepsilon\log_2\sqrt{|L|}}\,
2^{(3/2-\varepsilon)\max(0,\tilde r(i)-\log_2\sqrt{|L|})}\,n,\nonumber
\end{align}
implying \eqref{eq:ojhboajhbfoasjfbslj}.
Finally, whenever $\tilde r(i)>r_0(i)$,
we necessarily have in view of the bound $\|y_i\|_{P_{N,n}}\leq 2\delta$
and \eqref{eq:ajhbkjfhbskfjhsb},
$$
4n\,2^{-\tilde r(i)}=
2\delta\;\Big(\frac{\delta}{n}\,2^{\tilde r(i)-1}\Big)^{-1}
\geq
m_\delta(y_i,\tilde r(i))
\geq
|L|^{\alpha}\,2^{-2\tilde r(i)}\,n\;
2^{1.5(\tilde r(i)-r_0(i))}
=
|L|^{\alpha}\,2^{-\tilde r(i)/2}\,n\;
2^{-1.5 r_0(i)}.
$$
Taking squares and rearranging, we obtain
$$
2^{\tilde r(i)}\leq
16\,|L|^{-2\alpha}\;
2^{3 r_0(i)},
$$
implying \eqref{eq:aoiubfoaskjfnslfk}, and the claim is established.

The relation \eqref{eq:aoiubfoaskjfnslfk}
implies that there are at most $O(\log |L|)$
admissible values for $\tilde r(i)$ for each $i\in L$.
Furthermore, in view of \eqref{eq:ojhboajhbfoasjfbslj},
for all $i\in L$, we have
$N\geq m_\delta(y_i,\tilde r(i))\geq |L|^{-O(1)}\,n$,
and a combination of a dyadic partitioning 
of the range $|L|^{-O(1)}\,n\dots N$
with the pigeonhole principle
implies the result.
\end{proof}

\begin{lemma}\label{lem:spikymdelta}
For every admissible choice of $K,K'$
and every $\alpha\in(0,1]$
there is a constant
$C_{\text{\tiny\ref{lem:spikymdelta}}}\geq 2$
depending only on $K,K',\alpha$
with the following property.
Assume that $n\geq c_{\text{\tiny\ref{lem:sparse_singular_values}}}^{-1}$.
Condition on 
any realization of $X_1,\dots,X_N$
from $\Omega_{\text{\tiny\ref{lem:sparse_singular_values}}}$.
Let $L$ be a finite index set
with $|L|\geq C_{\text{\tiny\ref{lem:spikymdelta}}}$,
let $\delta\in(0,1]$ be a parameter,
and let $y_i$, $i\in L$, be
unit vectors in $\R^n$ with $\|y_i\|_{P_{N,n}}\in [\delta,2\delta]$.
Assume that 
for every $y_i$, $i\in L$,
$$
\max\limits_{0\leq r\leq \log_2 \sqrt{|L|}}
\frac{m_\delta(y_i,r)}{2^{-2r}\,n}
>
|L|^{\alpha}.
$$
Then there is a tuple $v=(v_i)_{i\in L}$ with $\|v\|_\infty=1$
such that
$$
\Big\|\sum_{i\in L}v_i\,y_i\Big\|_{P_{N,n}}
\geq \delta\,|L|^{\alpha/5}.
$$
\end{lemma}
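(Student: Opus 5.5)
We argue by contradiction: suppose that $\|\sum_{i\in L}v_i\,y_i\|_{P_{N,n}}<\delta\,|L|^{\alpha/5}$ for every $v$ with $\|v\|_\infty\le1$.

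\textbf{Preprocessing.} Apply Lemma~\ref{lem:cleaning II} with $\varepsilon:=\alpha/10$. This gives a subfamily $\tilde L$ of size $|L|^{1-o(1)}$, a common level $r$ with $0\le r\le 3\log_2|L|+4$, and a number $p$ such that every $i\in\tilde L$ has $m_\delta(y_i,r)\in[p,2p]$. Moreover, the dyadic profile of each $\beta(y_i)$ is dominated by the level $r$. Concretely, levels $h<r$ decay like $2^{(-2+\varepsilon)(r-h)}$ relative to level $r$ (measured from the count at $r$), and levels $h>r$ decay like $2^{-(h-r)/2}$.

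\textbf{Locating the mass.} Let $J_i$ be the support of level $r$ of $\beta(y_i)$. Then $|J_i|\approx p$ and the entries there are of size $\approx\delta 2^r/n$. So the $\ell_1$-mass of $\beta(y_i)$ on $J_i$ is about $p\,\delta 2^r/n$, and this is at least $|L|^{\alpha-\varepsilon}2^{-r}\delta$ up to the factor $2^{\max(0,r-\log_2\sqrt{|L|})}$. Since $\|\beta(y_i)\|_1\le2\delta$, we also get $p\lesssim n2^{-r}$.

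\textbf{Choosing a subfamily $U$.} Take $U\subset\tilde L$ of size $u$, where $u$ is tuned so that $J:=\bigcup_{i\in U}J_i$ satisfies $|J|\le c_{\text{\tiny\ref{lem:generalPNbasic}}}\,n$, i.e.\ $u\,p\lesssim n$. Now compare the two sides of Lemma~\ref{lem:generalPNbasic} (applied with $L$ replaced by $U$):
\[
\sum_{h\in J}\sqrt{\sum_{i\in U}\beta_h(y_i)^2}\;\ge\;\frac{1}{\sqrt u}\sum_{i\in U}\sum_{h\in J_i}|\beta_h(y_i)|\;\approx\;\sqrt u\,p\,\delta\,2^r/n,
\]
while $\Exp_\sigma\|\beta^\sigma\|_1<\delta|L|^{\alpha/5}$ by the contradiction hypothesis. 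The gain $|L|^{\alpha-\varepsilon}$ in $p$ is what must beat $|L|^{\alpha/5}$. This forces a case split according to whether $2^r$ is below or above $\sqrt{|L|}$, and $u$ is chosen as a suitable power of $|L|$ in each case.

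\textbf{Deriving the contradiction.} Once hypothesis \eqref{eq: 315aslkfnl} holds, Lemma~\ref{lem:generalPNbasic} produces at least $c\,n$ indices $j\notin J$ with
\[
\sqrt{\sum_{i\in U}\beta_j(y_i)^2}\;\gtrsim\;\frac{\sqrt u\,p\,\delta\,2^r/n}{\sqrt{|J|\,n}}.
\]
Summing over these $j$ yields a lower bound on $\sum_{i\in U}\sum_{j\notin J}|\beta_j(y_i)|$. This is contradicted by the profile bounds: levels $h<r$ contribute total mass $\lesssim\sum_{h<r}2^{(-2+\varepsilon)(h-r)}p\cdot\delta2^h/n$, which is dominated by the level-$r$ mass times a geometric factor. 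Levels $h>r$ are controlled through $\|\beta(y_i)\|_1\le2\delta$ and the $2^{-(h-r)/2}$ decay. This mirrors case~2 of Lemma~\ref{lem:stairs}. The key point is that for $h<r$ the mass decays like $2^{-(1-\varepsilon)(r-h)}$, so the tail off $J$ is at most $O(p\delta2^r/n)$ per vector. Meanwhile Lemma~\ref{lem:generalPNbasic} forces $\gtrsim n\cdot\sqrt u\,p\delta2^r/(n\sqrt{up\,n})$ in total, i.e.\ roughly $\sqrt{n/p}$ times more. Since $p\ll n$ (because $p\lesssim n2^{-r}$, with $r\ge1$ or handled separately when $r=0$), this is a contradiction.

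\textbf{Main obstacle.} The hard step is the exponent bookkeeping: choosing $u$ so that both $|J|\le c\,n$ and \eqref{eq: 315aslkfnl} hold uniformly over the whole range $0\le r\le3\log_2|L|+4$. In particular, the regime $2^r>\sqrt{|L|}$, where the extra factor $2^{\max(0,r-\log_2\sqrt{|L|})}$ must compensate, and the degenerate level $r=0$ (where the $J_i$ are large) will need separate treatment. For $r=0$ I would instead use a mass-on-$J$ argument with $u=1$ directly.
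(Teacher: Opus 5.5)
\textbf{There is a genuine gap: the final contradiction step fails, because you carry out the comparison in $\ell_1$.}

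Your skeleton matches the paper's: preprocess with Lemma~\ref{lem:cleaning II}, choose $U$ so that the exceptional set has size at most $c\,n$, verify \eqref{eq: 315aslkfnl} from the contradiction hypothesis, apply Lemma~\ref{lem:generalPNbasic}, and contradict the profile. Your check of \eqref{eq: 315aslkfnl} is essentially right, up to a polylogarithmic loss explained below.

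The problem is that you have reversed the direction of the profile bound below level $r$. Lemma~\ref{lem:cleaning II} gives, for $h<r$,
$m_\delta(y_i,h)<2^{(-2+\varepsilon)(h-r)}m_\delta(y_i,r)=2^{(2-\varepsilon)(r-h)}m_\delta(y_i,r)$.
So lower levels may contain many more entries, and the $\ell_1$ mass at level $h<r$ can be about $2^{(1-\varepsilon)(r-h)}$ times the level-$r$ mass: it grows, it does not decay. The same holds for the levels $h>r$ that your $J$ omits, whose mass can be about $2^{(h-r)/2}$ times the level-$r$ mass. Hence the only $\ell_1$ bound available off $J$ is the trivial $2\delta$ per vector. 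This can be far larger than $p\delta 2^r/n$: for $2^r\approx\sqrt{|L|}$ and $p$ near its lower bound, the level-$r$ mass is only about $|L|^{\alpha-\varepsilon-1/2}\delta$.

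There is also a bookkeeping slip. You compare the \emph{total} lower bound $\sqrt{p/n}\,\delta 2^r$ with a \emph{per-vector} upper bound. Summed over $U$, even your claimed bound is $u\,p\delta2^r/n$, and for $u\approx n/p$ the ratio is $\sqrt{p/n}\ll1$, in the wrong direction. Taking $u\ll\sqrt{n/p}$ instead breaks \eqref{eq: 315aslkfnl} in some regimes. The underlying loss is that in $\ell_1$ you pay up to $\sqrt u$ when passing between $\sum_i|\beta_j(y_i)|$ and $v_j:=\sqrt{\sum_i\beta_j(y_i)^2}$, and $v_j$ is the only quantity Lemma~\ref{lem:generalPNbasic} controls.

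\textbf{What is missing.} The paper's argument supplies the following ingredients.

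First, compare \emph{squared} masses. The quantity $\sum_j v_j^2=\sum_{i,j}\beta_j(y_i)^2$ is exactly additive in $i$, and the profile bound caps the squared mass of each $\beta(y_i)$ at level $h<r$ by $2p\,\delta^22^{2r}n^{-2}\,2^{-\varepsilon(r-h)}$. This decays, but only at rate $\varepsilon$. So the levels just below $r$ can still carry order $\varepsilon^{-1}$ times the level-$r$ squared mass, and excluding only the levels $\ge r$ is not enough.

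The paper therefore excludes the larger set $J'$ of all entries at levels $\ge r'$. Here $r-r'\approx\varepsilon^{-1}\log\log|L|$ is chosen so that $\varepsilon^{-1}2^{\varepsilon(r'-r)}<c^3/(64(\log_2|L|+2))$. This choice dictates $|U|\approx c\,n/(p\,2^{(2-\varepsilon)(r-r')})$, which is a polylogarithmic loss relative to your $u\approx n/p$.

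Second, Lemma~\ref{lem:generalPNbasic} only yields squared mass $\gtrsim c^3(\sum_{h\in J}v_h)^2/|J|$ off $J$. With your Cauchy--Schwarz bound $\sum_{h\in J}v_h\gtrsim\sqrt u\,p\delta2^r/n$ and $|J|$ up to $2up$ (the disjoint-support case), this is only the squared mass of a \emph{single} $y_i$. The tail below $r'$, by contrast, is $|U|$ times that.

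The paper handles this as follows:
\begin{itemize}
\item It splits $J$ (all entries at levels $\ge r$) into multiplicity classes $J_w=\{j:\ |\{i\in U:|\beta_j(y_i)|>\delta2^{r-1}/n\}|\in[2^{w-1},2^w)\}$, on which $v$ is essentially flat.
\item It picks $w_0$ maximizing $|J_w|2^w$, which loses only a factor $\log|U|$, and applies Lemma~\ref{lem:generalPNbasic} to $J_{w_0}$.
\item This produces at least $c\,n/2$ indices outside $J'$ carrying squared mass $\gtrsim c^3p|U|\delta^22^{2r}/(n^2\log|L|)$, which contradicts the tail bound by the choice of $r'$.
\end{itemize}

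Finally, $r=0$ needs no separate treatment. The bounds $p\le N$ and $p\ge|L|^{\alpha-\varepsilon}2^{-2r}n$ together force $2^{2r}\ge|L|^{\alpha-\varepsilon}/K'$.
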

\begin{proof}
Define $\varepsilon:=\alpha/2$.
We will assume that the constant $C_{\text{\tiny\ref{lem:spikymdelta}}}$
is large, and, in particular,
$|L|^{\alpha-\varepsilon}\,n> N$.
Applying Lemma~\ref{lem:cleaning II},
we obtain a collection $\tilde L\subset L$
of size at least
$c_{\text{\tiny\ref{lem:cleaning II}}}\,(\log|L|)^{-2}|L|$
such that for 
some $0\leq r\leq 3\log_2|L|+4$, some
\begin{equation}\label{eq:ajhbjfbkjfhlgjgnljn}
p\geq 
|L|^{\alpha-\varepsilon}\,2^{-2r}\,
2^{\max(0,r-\log_2\sqrt{|L|})}\,n
\geq |L|^{\alpha-\varepsilon}\,2^{-2r}\,n,    
\end{equation}
and
every $i\in\tilde L$,
$$
m_\delta(y_i,r)\in[p,2p],
$$
and
\begin{align*}
&m_\delta(y_i,h)
< 2^{-2(h-r)-\varepsilon(r-h)}\,m_\delta(y_i,r),
\quad
&0\leq h< r;\\
&m_\delta(y_i,h)
< 2^{-(h-r)/2}\,m_\delta(y_i,r),
\quad
&h> r.
\end{align*}
Note that in view of the trivial bound $p\leq N$ and
\eqref{eq:ajhbjfbkjfhlgjgnljn}, we have
\begin{equation}\label{eq:ajhbajhfbalkvkjbnrpk}
r\geq \frac{1}{2}\log_2\frac{|L|^{\alpha-\varepsilon}\,n}{N}
=\Omega(\log|L|)
>0.    
\end{equation}
Let $r'$ be the largest integer in the interval $[1,r-1]$ 
such that
\begin{equation}\label{eq:ajhbasfjnpfkjsnfosakjfnl}
\frac{1}{\varepsilon}
\,2^{\varepsilon(r'-r)}
<
\frac{c_{\text{\tiny\ref{lem:generalPNbasic}}}^3}{64}\,
\frac{1}{\log_2|L|+2}
\end{equation}
(note that $r'$ is well defined as long as $C_{\text{\tiny\ref{lem:spikymdelta}}}$ is sufficiently large).
Further, let $U\subset \tilde L$ be a subset of $\tilde L$
of size 
$$
|U|= \min\bigg(\bigg\lfloor 
\frac{c_{\text{\tiny\ref{lem:generalPNbasic}}}\,n}
{8\,p\cdot 2^{(-2+\varepsilon)(r'-r)}}\bigg\rfloor,|\tilde L|\bigg).
$$
The bound $\|y_i\|_{P_{N,n}}\leq 2\delta$, $i\in\tilde L$,
together with \eqref{eq:ajhbajhfbalkvkjbnrpk}
imply
$$
p\leq \bigg(\frac{\delta\,2^{r-1}}{n}\bigg)^{-1}\,2\delta
=\frac{4n}{2^r}
\leq \frac{4n}{\sqrt{\frac{|L|^{\alpha-\varepsilon}\,n}{N}}}
\leq \frac{4\sqrt{K'}\,n}
{|L|^{(\alpha-\varepsilon)/2}}.
$$
The last inequality, together with \eqref{eq:ajhbasfjnpfkjsnfosakjfnl}
and the definition of $U$, implies 
$$
|U|\geq \frac{|L|^{(\alpha-\varepsilon)/2}}{(\log|L|)^{C_\alpha}}>0,
$$
for some $C_\alpha\geq 1$ depending only on $\alpha,K',K$.

We are going to apply Lemma~\ref{lem:generalPNbasic}
to get the required result.
We will argue by contradiction.
Assume that for every
choice of the signs $\sigma_i$, $i\in U$,
\begin{equation}\label{eq:oajhbfoajhfboajsflaknf}
\Big\|\sum_{i\in U}\sigma_i\,y_i\Big\|_{P_{N,n}}
\leq \delta\,|L|^{0.49(\alpha-\varepsilon)}.    
\end{equation}
Denote by $J'$ the collection of all indices $j\in[N]$
such that 
$|\beta_j(y_i)|>\delta\,2^{r'-1}/n$
for some $i\in U$,
and, similarly, let $J$
be the collection of all indices $j\in[N]$
such that 
$|\beta_j(y_i)|>\delta\,2^{r-1}/n$
for some $i\in U$.
From the above definitions of $J'$ and $U$
and the bounds on $m_\delta(y_i,h)$, it follows that
\begin{equation}\label{eq: JJprimesize}
\begin{split}
|J|\leq
|J'|&\leq 
\sum_{i\in U}
\sum_{h\geq r'}
m_\delta(y_i,h)\\
&\leq
2p\,|U|
\sum_{h=r'}^{r-1} 2^{(-2+\varepsilon)(h-r)}
+2p\,|U|+2p\,|U|\sum_{h>r} 2^{-(h-r)/2}\\
&<
4p\,|U|
\cdot 2^{(-2+\varepsilon)(r'-r)}
\leq \frac{c_{\text{\tiny\ref{lem:generalPNbasic}}}}{2}\,n.
\end{split}
\end{equation}
We further partition $J$ as
$$
J=\bigcup\limits_{w=1}^{\lceil\log_2|U|\rceil+1} J_w,
$$
where
$$
J_w:=\big\{j\in J:\;
|\{i\in U:\;|\beta_j(y_i)|>\delta\,2^{r-1}/n\}|
\in[2^{w-1},2^w)\big\}.
$$
Observe that
$$
\sum_{w=1}^{\lceil\log_2|U|\rceil+1}
|J_w|\,2^{w-1}
\leq
\sum_{j\in[N]}\sum_{i\in U}
{\bf 1}_{\{|\beta_j(y_i)|>\delta\,2^{r-1}/n\}}
=\sum_{i\in U}\sum_{h\geq r}m_\delta(y_i,h)
\leq \sum_{w=1}^{\lceil\log_2|U|\rceil+1}
|J_w|\,2^{w},
$$
where, by the condition on $m_\delta(y_i,r)$,
$$
p\,|U|\leq
\sum_{i\in U}m_\delta(y_i,r)\leq
\sum_{i\in U}\sum_{h\geq r}m_\delta(y_i,h)
<4\,\sum_{i\in U}m_\delta(y_i,r)
\leq 8p\,|U|.
$$
Denote by $w_0$ an integer in $1\dots \lceil\log_2|U|\rceil+1$
corresponding to the largest value of the product $|J_w|\,2^{w}$
(with an arbitrary tie-breaking), so that
$$
|J_{w_0}|\,2^{w_0}
\geq \frac{1}{\lceil\log_2|U|\rceil+1}
\sum_{w=1}^{\lceil\log_2|U|\rceil+1}
|J_w|\,2^{w}
\geq \frac{p\,|U|}{\lceil\log_2|U|\rceil+1}.
$$
Further, we have
\begin{align}
c_{\text{\tiny\ref{lem:generalPNbasic}}}\,
\sum_{h\in J_{w_0}}\sqrt{\sum_{i\in U}\beta_h(y_i)^2}
&\geq
c_{\text{\tiny\ref{lem:generalPNbasic}}}\,\delta\,2^{r-1}\,
2^{(w_0-1)/2}\,
\frac{|J_{w_0}|}{n}\nonumber\\
&\geq \frac{c_{\text{\tiny\ref{lem:generalPNbasic}}}\,\delta\,2^{r-1}}{2^{(w_0+1)/2}\,n}\,\frac{p\,|U|}{\lceil\log_2|U|\rceil+1}\nonumber\\
&\geq 
\frac{c_{\text{\tiny\ref{lem:generalPNbasic}}}\,\delta\,2^{r}}{24\,n}\,\frac{p\,\sqrt{|U|}}{\log_2|L|}\label{eq:kajhfbkasjfnourygowf},
\end{align}
where we used that $w_0\leq\lceil\log_2|U|\rceil+1$.
Consider two cases. 
\begin{itemize}
\item 
$\Big\lfloor 
\frac{c_{\text{\tiny\ref{lem:generalPNbasic}}}\,n}
{8\,p\cdot 2^{(-2+\varepsilon)(r'-r)}}\Big\rfloor\leq |\tilde L|$.
In this case, by the definition of $U$ we have
$$|U|=\bigg\lfloor 
\frac{c_{\text{\tiny\ref{lem:generalPNbasic}}}\,n}
{8\,p\cdot 2^{(-2+\varepsilon)(r'-r)}}\bigg\rfloor,$$
and
\begin{align*}
2^{r}\,p\,\sqrt{|U|}
&\geq 2^{r-1}
\frac{c_{\text{\tiny\ref{lem:generalPNbasic}}}^{1/2}\,\sqrt{n\,p}}
{2\sqrt{2}\cdot 2^{(-1+\varepsilon/2)(r'-r)}}\\
&>
\frac{c_{\text{\tiny\ref{lem:generalPNbasic}}}^{1/2}}{8}
\,2^{(1-\varepsilon/2)(r'-r)}\,
|L|^{(\alpha-\varepsilon)/2}\,
2^{\max(0,r-\log_2\sqrt{|L|})/2}\,n,
\end{align*}
where we used the assumptions on parameter $p$.
Combining the last relation with \eqref{eq:kajhfbkasjfnourygowf}
and \eqref{eq:oajhbfoajhfboajsflaknf}
and the definition of $r'$, and taking into consideration that
$L$ is large,
we obtain
\begin{equation}\label{eq:ljahboafhbpigjnlkjn}
c_{\text{\tiny\ref{lem:generalPNbasic}}}\,
\sum_{h\in J_{w_0}}\sqrt{\sum_{i\in U}\beta_h(y_i)^2}
\geq 
\frac{c_{\text{\tiny\ref{lem:generalPNbasic}}}^{3/2}\,\delta}
{200}\,\frac{|L|^{(\alpha-\varepsilon)/2}}{\log_2|L|}
\,2^{(1-\varepsilon/2)(r'-r)}
\geq 
\Exp_{\sigma}\,\|\beta(y^\sigma)\|_1.    
\end{equation}

\item $\Big\lfloor 
\frac{c_{\text{\tiny\ref{lem:generalPNbasic}}}\,n}
{8\,p\cdot 2^{(-2+\varepsilon)(r'-r)}}\Big\rfloor> |\tilde L|$.
Then $|U|=|\tilde L|$, and
$$
2^{r}\,p\,\sqrt{|U|}
\geq
|L|^{\alpha-\varepsilon}\,2^{-r}\,
2^{\max(0,r-\log_2\sqrt{|L|})}\,n\,\sqrt{|\tilde L|}
\geq |L|^{\alpha-\varepsilon}\,n,
$$
and hence, again applying \eqref{eq:kajhfbkasjfnourygowf}
and \eqref{eq:oajhbfoajhfboajsflaknf},
\begin{equation}\label{eq:aljhbfkgjnpjnfkjnlaksjn}
c_{\text{\tiny\ref{lem:generalPNbasic}}}\,
\sum_{h\in J_{w_0}}\sqrt{\sum_{i\in U}\beta_h(y_i)^2}
\geq 
\frac{c_{\text{\tiny\ref{lem:generalPNbasic}}}\,\delta}{24}\,\frac{|L|^{\alpha-\varepsilon}}{\log_2|L|}
\geq \Exp_{\sigma}\,\|\beta(y^\sigma)\|_1.
\end{equation}

\end{itemize}

In either case, 
\eqref{eq: JJprimesize},
\eqref{eq:ljahboafhbpigjnlkjn}, and
\eqref{eq:aljhbfkgjnpjnfkjnlaksjn}
allows us to apply
Lemma~\ref{lem:generalPNbasic},
    we get that there must exist 
    at least $c_{\text{\tiny\ref{lem:generalPNbasic}}}\,n$
    indices $j\in J^c_{w_0}$ satisfying
    \begin{align*}
    \sqrt{\sum_{i\in U}\beta_j(y_i)^2}
    &\geq 
    \frac{c_{\text{\tiny\ref{lem:generalPNbasic}}}}{\sqrt{|J_{w_0}|\,n}}\,
    \sum_{h\in J_{w_0}}
    \,\sqrt{\sum_{i\in U}\beta_h(y_i)^2}\\
    &\geq 
    \frac{c_{\text{\tiny\ref{lem:generalPNbasic}}}\,
    \sqrt{|J_{w_0}|}}{\sqrt{n}}\,
    \frac{\delta\,2^{r-1}}{n}\,
    2^{(w_0-1)/2},
    \end{align*}
    implying (in view of the bound $|[N]\setminus J'|
    \geq N-\frac{c_{\text{\tiny\ref{lem:generalPNbasic}}}}{2}\,n$
    in \eqref{eq: JJprimesize})
    \begin{equation}\label{eq: aoijfbofkjnsflksjnflk}
    \sum_{j\in[N]\setminus J'}\sum_{i\in U}\beta_j(y_i)^2
    \geq 
    \frac{c_{\text{\tiny\ref{lem:generalPNbasic}}}^3
    \delta^2\,2^{2r}\,2^{w_0}|J_{w_0}|}{16\,n^2}.
    \end{equation}
   At the same time, in view of definition of $J'$
   and the bounds on $m_\delta(y_i,h)$,
   for every $i\in U$,
   $$
   \sum_{j\in[N]\setminus J'}
   \beta_j(y_i)^2
   \leq 2p\,\sum\limits_{h=0}^{r'-1}
    2^{(-2+\varepsilon)(h-r)}\,2^{2h}\,\frac{\delta^2}{n^2}
    <
    \frac{4\,p\,\delta^2\,2^{(-2+\varepsilon)(r'-r)}\,2^{2r'}}{\varepsilon\,n^2},
   $$
implying
$$
\sum_{j\in[N]\setminus J'}
\sum_{i\in U}
   \beta_j(y_i)^2
\leq \frac{4\,p\,\delta^2\,2^{(-2+\varepsilon)(r'-r)}\,2^{2r'}\,|U|}{\varepsilon\,n^2}.
$$
Combining the last relation with \eqref{eq: aoijfbofkjnsflksjnflk},
we get
$$
\frac{1}{\varepsilon}
\,p\,2^{\varepsilon(r'-r)}\,|U|
\geq 
\frac{1}{64}\,c_{\text{\tiny\ref{lem:generalPNbasic}}}^3
\,2^{w_0}|J_{w_0}|
\geq 
\frac{c_{\text{\tiny\ref{lem:generalPNbasic}}}^3}{64}\,
\frac{p\,|U|}{\lceil\log_2|U|\rceil+1}.
$$
leading to contradiction in view of the definition of $r'$.
\end{proof}

Having treated both settings where the vectors $\beta(y_i)$ are either ``flat''
(Lemma~\ref{lem:stairs}) or ``spiky'' (Lemma \ref{lem:spikymdelta}), the proof of Theorem~\ref{th: Gluskinlinfty}
is essentially complete. What remains are some simple reduction steps
and a formal link between Lemma~\ref{lem:stairs} and Proposition~\ref{prop:spansofcomp}.

\begin{proof}[Proof of Theorem~\ref{th: Gluskinlinfty}]
We will assume that the constants $\alpha\in(0,0.01]$
and $c_{\text{\tiny\ref{th: Gluskinlinfty}}}$
are small, and $C_{\text{\tiny\ref{th: Gluskinlinfty}}}>0$ is large.
We can therefore assume without loss of generality that $k$ is large.
We will argue by contradiction.
Let $E$ be a $k$--dimensional linear subspace of $(\R^n,\|\cdot\|_{P_{N,n}})$
with 
$$
d_{BM}(\ell_\infty^k,E)
\leq c_{\text{\tiny\ref{th: Gluskinlinfty}}}\,k^\alpha.
$$
Applying Lemma~\ref{lem: ellinftyreg},
we find $\tilde k:=\lfloor c_{\text{\tiny\ref{lem: ellinftyreg}}}\,k/\log k\rfloor$
unit vectors $y_1,y_2,\dots,y_{\tilde k}$
and a number $\delta>0$ such that
$\|y_i\|\in[\delta,2\delta]$ for every $i\leq \tilde k$,
and for every choice of numbers $v=(v_i)_{i\leq \tilde k}$,
    \begin{equation}\label{eq:alksjbfasopfjnlfkjnla}
    \Big\|
    \sum_{i=1}^{\tilde k}v_i\,y_i
    \Big\|_{P_{N,n}}\leq C_{\text{\tiny\ref{lem: ellinftyreg}}}\,c_{\text{\tiny\ref{th: Gluskinlinfty}}}\,\delta\,k^\alpha\,\|v\|_\infty
    <\delta\,(\tilde k/2)^{2\alpha}\,\|v\|_\infty.
    \end{equation}
We can (and will) assume that
$\tilde k=
\lfloor c_{\text{\tiny\ref{lem: ellinftyreg}}}\,k/\log k\rfloor\geq
2\,\max(C_{\text{\tiny\ref{lem:spikymdelta}}},
C_{\text{\tiny\ref{lem:stairs}}})$.
Suppose first that for $\lceil \tilde k/2\rceil $ indices $i$,
$$
\max\limits_{0\leq r\leq \log_2 \sqrt{\lceil \tilde k/2\rceil}}
\frac{m_\delta(y_i,r)}{2^{-2r}\,n}
>
\lceil \tilde k/2\rceil^{20\alpha}.
$$
Denote the corresponding set by $L$.
Applying Lemma~\ref{lem:spikymdelta}
with $20\alpha$ in place of $\alpha$,
we obtain a tuple $v=(v_i)_{i\in L}$ with $\|v\|_\infty=1$
such that
$$
\Big\|\sum_{i\in L}v_i\,y_i\Big\|_{P_{N,n}}
\geq \delta\,|L|^{4\alpha}=\delta\,\lceil\tilde k/2\rceil^{4\alpha},
$$
which contradicts \eqref{eq:alksjbfasopfjnlfkjnla}.
We conclude that there is a collection of indices $L'$
of size $\lceil \tilde k/2\rceil\geq C_{\text{\tiny\ref{lem:stairs}}}$
such that
$$
\max\limits_{0\leq r\leq \log_2 \sqrt{\lceil \tilde k/2\rceil}}
\frac{m_\delta(y_i,r)}{2^{-2r}\,n}
\leq
\lceil \tilde k/2\rceil^{20\alpha},\quad i\in L'.
$$
Note that we assume $20\alpha\leq c_{\text{\tiny\ref{lem:stairs}}}$.
Applying Lemma~\ref{lem:stairs} with $20\alpha$
in place of $\alpha$,
we obtain a collection
$\tilde y_i$, $i\in U$,
of vectors of unit Euclidean length
with $|U|=\big\lfloor |L'|^{1/16}\big\rfloor$ such that for every $\tilde y_i$,
\begin{itemize}
    \item $\|\tilde y_i\|_{P_{N,n}}\geq
    C_{\text{\tiny\ref{lem:stairs}}}^{-1}\;|L'|^{-10\alpha}\,
    (\log |L'|)^{-1}$;
    \item For every choice of signs $\sigma_i$, $i\in U$, $\big\|
    \sum_{i\in U}\sigma_i\,\tilde y_i
    \big\|_{P_{N,n}}\leq C_{\text{\tiny\ref{lem:stairs}}}\,|L'|^{20\alpha}$.
\end{itemize}
It remains to note that, as long as $\alpha$
is sufficiently small, the existence of such collection
$\tilde y_i$, $i\in U$, contradicts
Proposition~\ref{prop:spansofcomp}.
The result follows.
\end{proof}

\subsection{Theorems~B and~A}

Theorem~B from the introduction follows directly
from Theorem~\ref{th: Gluskinlinfty}
by observing that the intersection of events
$\Omega_{\text{\tiny\ref{lem:sparse_singular_values}}}
\cap \Omega_{\text{\tiny\ref{lem:dotprodstat}}}$
has probability $1-\frac{O(1)}{n}$, in view of Lemmas~\ref{lem:sparse_singular_values} 
and~\ref{lem:dotprodstat}.
Further, Theorem~A follows from Theorem~B by applying the following
``non-asymptotic'' version of the Maurey--Pisier theorem \cite{MP76}:

\begin{theorem}[A non-asymptotic restatement of the Maurey--Pisier theorem \cite{MP76}]\label{prop: na}
For every $k_0\geq 2$ there are finite constants $q\geq 2$ and $C_q>0$
depending {\bf only} on $k_0$, with the following property.
Let ${\bf X}$ be a Banach space with $\dim {\bf X}> k_0$, and suppose that
for every $k_0$--dimensional subspace $E$ of ${\bf X}$,
the Banach--Mazur distance from $E$ to $\ell_\infty^{k_0}$ is greater than $100$.
Then
necessarily ${\bf X}$ has cotype $q$ with constant $C_q$.
\end{theorem}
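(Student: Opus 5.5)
This is the finite-dimensional form of the Maurey--Pisier theorem. I would derive it by contradiction and compactness from the infinite-dimensional statement quoted in the introduction: a Banach space has finite cotype iff it does not contain $\{\ell_\infty^m\}_m$ uniformly.

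\textbf{Reduction to bounded dimension.} Cotype is a condition on finitely many vectors at a time, so ${\bf X}$ has cotype $q$ with constant $C_q$ iff every finite-dimensional subspace $F\subset{\bf X}$ does. The hypothesis passes to subspaces: if $F\subset{\bf X}$ and $\dim F>k_0$, every $k_0$-dimensional subspace of $F$ is at distance $>100$ from $\ell_\infty^{k_0}$. If $\dim F\le k_0$, I would not use the cotype-$2$ bound with constant $\sqrt{\dim F}$, which comes from John's theorem, because that gives no uniformity in $C_q$. Instead I would embed $F$ isometrically into a slightly larger subspace of ${\bf X}$, which is possible because $\dim{\bf X}>k_0$. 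It therefore suffices to prove the statement for \emph{finite-dimensional} spaces ${\bf X}$ of arbitrary dimension $>k_0$, with $q,C_q$ independent of $\dim{\bf X}$.

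\textbf{Contradiction via ultraproducts.} Suppose no pair $(q,C_q)$ works. Then for each $m$ (taking $q=m$ and $C_q=m$) there is a finite-dimensional space ${\bf X}_m$ with no $k_0$-subspace $100$-close to $\ell_\infty^{k_0}$, yet ${\bf X}_m$ fails cotype $m$ with constant $m$. Form the $\ell_2$-sum ${\bf Y}=(\bigoplus_m {\bf X}_m)_{\ell_2}$. I would show two things about ${\bf Y}$.

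First, ${\bf Y}$ has infinite cotype, since cotype $q$ with constant $C$ passes to subspaces and ${\bf X}_m$ sits isometrically in ${\bf Y}$. By Maurey--Pisier, ${\bf Y}$ then contains $\ell_\infty^{k_0}$ with distortion at most $1+\eta$ for any $\eta>0$.

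Second, I need to transfer that copy back to a single ${\bf X}_m$. This is the step I expect to be the main obstacle, because $\ell_\infty^{k_0}$ could be spread across many summands of the $\ell_2$-sum. To avoid this, I would replace the direct sum by an ultraproduct ${\bf Y}=\prod_{\mathcal U}{\bf X}_m$.

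By finite representability of ultraproducts, any $k_0$-dimensional subspace of the ultraproduct is $(1+\eta)$-isomorphic to a subspace of ${\bf X}_m$ for $\mathcal U$-many $m$. Hence a $(1+\eta)$-copy of $\ell_\infty^{k_0}$ in ${\bf Y}$ yields a $k_0$-subspace of some ${\bf X}_m$ at distance $\le(1+\eta)^2<100$ from $\ell_\infty^{k_0}$, a contradiction.

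It remains to check that the ultraproduct has infinite cotype. Cotype constants of ${\bf X}_m$ are not preserved under ultraproducts in general, but here they are forced: by a standard monotonicity fact, failure of cotype $m$ with constant $m$ implies failure of cotype $q$ with constant $C$ for every fixed $q\le m$ and $C\le m$, possibly after adjusting constants by comparing $\ell_q$ norms via Kahane's inequality. For each fixed $(q,C)$, the failing finite configurations in ${\bf X}_m$ for large $m$ then transfer to the ultraproduct. So the ultraproduct fails every cotype, which contradicts the finite-dimensional conclusion above.

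Alternatively, one can bypass the ultraproduct by invoking the quantitative Maurey--Pisier/Alon--Milman theorem directly. It states that if the cotype-$q$ constant exceeds a function of $k_0$, then $\ell_\infty^{k_0}$ embeds with distortion $\le 2<100$. That route gives explicit $q,C_q$; it is the version I would cite if a reference is acceptable.
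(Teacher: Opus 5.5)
Your fallback, quoting the quantitative Maurey--Pisier theorem, is exactly what the paper does. It gives no argument and presents the statement as a non-asymptotic restatement of \cite{MP76}.

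The compactness route you put first has a genuine gap at the claim that ${\bf Y}=\prod_{\mathcal{U}}{\bf X}_m$ fails every cotype. Monotonicity in $(q,C)$ only says that the \emph{same} vectors witnessing failure of cotype $m$ with constant $m$ also witness failure of cotype $q$ with constant $C$. These witnesses necessarily have unbounded length. Since $(\Exp\|\sum_i\sigma_i y_i\|^m)^{1/m}\ge\max_i\|y_i\|$, a $k$-term witness requires $k^{1/m}>m$, i.e.\ $k>m^m$. A configuration in ${\bf Y}$ consists of a fixed finite number of vectors, so these witnesses do not transfer. Write $C_q^{(k)}$ for the cotype-$q$ constant computed on $k$-term configurations. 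You know $\lim_{\mathcal{U}}\sup_k C_q^{(k)}({\bf X}_m)=\infty$, but you need $\sup_k\lim_{\mathcal{U}}C_q^{(k)}({\bf X}_m)=\infty$, and in general these differ. For example, take $p_m\downarrow q$: each $\ell_{p_m}$ fails cotype $q$, yet $C_q^{(k)}(\ell_{p_m})=O_q(k^{1/q-1/p_m})$, so the ultraproduct has cotype $q$. The localisation difficulty you saw in the $\ell_2$-sum has only been moved. Your assertion that the constants are ``forced'' here is precisely the unproved point.

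The missing idea is to run the dichotomy on a quantity that sees only $n$-term configurations, for instance $\mu_n(X)=\inf\{(\Exp\|\sum_{i\le n}\sigma_i x_i\|^2)^{1/2}:\ \sum_{i\le n}\|x_i\|^2=1\}\in[n^{-1/2},1]$. It satisfies $\mu_n({\bf Y})\le\lim_{\mathcal{U}}\mu_n({\bf X}_m)$. It is also supermultiplicative, $\mu_{ab}\ge\mu_a\mu_b$: replace $\sigma_{ij}$ by $\sigma_{ij}\epsilon_j$ and condition on $\sigma$.
\begin{itemize}
\item Suppose $\lim_{\mathcal{U}}\sqrt{n_0}\,\mu_{n_0}({\bf X}_m)>1$ for some $n_0$. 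Supermultiplicativity gives $\mu_n({\bf X}_m)\ge c\,n^{\alpha-1/2}$ for $\mathcal{U}$-many $m$, with $c,\alpha>0$ independent of $m$. Grouping vectors dyadically by norm converts this into cotype $q$, for any $q>1/\alpha$, with a constant independent of $m$. That contradicts the choice of ${\bf X}_m$.
\item Otherwise $\mu_n({\bf Y})=n^{-1/2}$ for every $n$. By Kahane's and H\"older's inequalities this rules out every cotype for ${\bf Y}$. Only at this point does your finite-representability step conclude the proof.
\end{itemize}
A minor point: your objection to John's theorem is unfounded, since $\sqrt{\dim F}\le\sqrt{k_0}$ depends only on $k_0$. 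Your enlargement trick also works, so this does not affect the argument.
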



\section{Proof of Theorem~C}

In this section, we provide a construction of
a Banach space of finite cotype from Theorem~C.
According to the results of
\cite{Gluskin,LGl}, there exists a universal constant
$C>1$ such that for every $n\geq 1$
and $N:=\lceil Cn\rceil$,
two independent copies $P_{N,n}$ and $\tilde P_{N,n}$
of the Gaussian polytope \eqref{def:polydef} in $\R^n$
satisfy
$$
d_{BM}(P_{N,n},\tilde P_{N,n})\geq cn
$$
with probability at least $\frac{1}{2}$.
Combining the estimate with Theorem~A,
we obtain that for 
some universal constants $n_0$, $q,C_q<\infty$, and
all $n\geq n_0$,
the event
\begin{align*}
\Event_{n}:=\Big\{&d_{BM}(P_{\lceil Cn\rceil,n},
\tilde P_{\lceil Cn\rceil,n})\geq cn\mbox{ and both
$(\R^n,\|\cdot\|_{P_{\lceil Cn\rceil,n}})$ 
and $(\R^n,\|\cdot\|_{\tilde P_{\lceil Cn\rceil,n}})$}\\
&\mbox{are of cotype $q$ with constant at most $C_q$}
\Big\},
\end{align*}
has a positive probability.

In what follows, for every $n\geq n_0$
we denote by $P_{n}$ and $\tilde P_{n}$
any realization of the random polytopes
$P_{\lceil Cn\rceil,n}$ and $\tilde P_{\lceil Cn\rceil,n}$
such that $\Event_n$ holds.
Define a Banach space ${\bf X}$
as an $\ell_q$ direct sum of $(\R^n,\|\cdot\|_{P_{n}})$
and $(\R^n,\|\cdot\|_{\tilde P_{n}})$, $n\geq n_0$,
namely, {\bf X} consists of
sequences
of pairs of vectors $\big(x_n,\tilde x_n\big)_{n\geq n_0}$, $x_n,\tilde x_n\in \R^n$,
with the norm given by
$$
\big\|\big(x_n,\tilde x_n\big)_{n\geq n_0}\big\|_{\bf X}^q
:=
\sum_{n\geq n_0}\big(\big\|x_n\big\|_{P_{n}}^q+
\big\|\tilde x_n\big\|_{\tilde P_{n}}^q
\big).
$$
Next, we verify that the constructed space has cotype $q$.
Let $y^{(1)},\dots,y^{(k)}$
be an arbitrary collection of vectors in ${\bf X}$,
where each $y^{(i)}$
can be written in form 
$$y^{(i)}
=\big(y^{(i)}_n,\tilde y^{(i)}_n\big)_{n\geq n_0},
\quad y^{(i)}_n,\tilde y^{(i)}_n\in\R^n,$$
and let $\sigma=(\sigma_1,\dots,\sigma_k)$
be a uniform random vector of signs.
By the definition of ${\bf X}$,
$$
\Exp\,\Big\|
    \sum_{i=1}^k \sigma_i\,y^{(i)}
    \Big\|_{\bf X}^q
    =
    \sum_{n\geq n_0}\Big(
    \Exp\,\Big\|
    \sum_{i=1}^k \sigma_i\,y^{(i)}_n
    \Big\|_{P_{n}}^q
    +
    \Exp\,\Big\|
    \sum_{i=1}^k \sigma_i\,\tilde y^{(i)}_n
    \Big\|_{\tilde P_{n}}^q
    \Big).
$$
Further, by our assumptions on the polytopes
$P_{n}$, we have
$$
\Exp\,\Big\|
    \sum_{i=1}^k \sigma_i\,y^{(i)}_n
    \Big\|_{P_{n}}^q
    \geq \frac{1}{C_q^q}
    \sum_{i=1}^k \big\|y^{(i)}_n
    \big\|_{P_{n}}^q,
$$
and analogous inequality holds for $\tilde P_{n}$.
We conclude that
$$
\Exp\,\Big\|
    \sum_{i=1}^k \sigma_i\,y^{(i)}
    \Big\|_{\bf X}^q
    \geq 
    \frac{1}{C_q^q}
    \sum_{n\geq n_0}\Big(
    \sum_{i=1}^k \big\|y^{(i)}_n
    \big\|_{P_{n}}^q
    +
    \sum_{i=1}^k \big\|\tilde y^{(i)}_n
    \big\|_{\tilde P_{n}}^q
    \Big)
    =\frac{1}{C_q^q}
    \sum_{i=1}^k \big\|y^{(i)}
    \big\|_{\bf X}^q,
$$
and the result follows.


\begin{thebibliography}{99}

\bibitem{AS92}
{
F. Affentranger\ and\ R. Schneider, Random projections of regular simplices, Discrete Comput. Geom. {\bf 7} (1992), no.~3, 219--226. MR1149653
}

\bibitem{AGP2015}
{
D. Alonso-Guti\'{e}rrez\ and\ J. Prochno, On the Gaussian behavior of marginals and the mean width of random polytopes, Proc. Amer. Math. Soc. {\bf 143} (2015), no.~2, 821--832. MR3283668
}




\bibitem{B1989}
{
I. B\'{a}r\'{a}ny, Intrinsic volumes and $f$-vectors of random polytopes, Math. Ann. {\bf 285} (1989), no.~4, 671--699. MR1027765
}

\bibitem{Barany2004}
{
I. B\'ar\'any: Random polytopes, convex bodies, and approximation. In: A. Baddeley, I. B\'ar\'any, R. Schneider, W. Weil, Stochastic Geometry (C.I.M.E. Course, Martina Franca, 2004), Lecture Notes Math., Springer.
}

\bibitem{BV94}
{
Y.~M. Baryshnikov\ and\ R.~A. Vitale, Regular simplices and Gaussian samples, Discrete Comput. Geom. {\bf 11} (1994), no.~2, 141--147. MR1254086
}

\bibitem{Bor2009}
{
K. B\"{o}r\"{o}czky\ et al., Mean width of random polytopes in a reasonably smooth convex body, J. Multivariate Anal. {\bf 100} (2009), no.~10, 2287--2295. MR2560369
}

\bibitem{BourgainD}
{
J. Bourgain, On finite-dimensional homogeneous Banach spaces, in {\it Geometric aspects of functional analysis (1986/87)}, 232--238, Lecture Notes in Math., 1317, Springer, Berlin. MR0950984
}

\bibitem{Dv}
{
A. Dvoretzky, Some results on convex bodies and Banach spaces, in {\it Proc. Internat. Sympos. Linear Spaces (Jerusalem, 1960)}, 123--160, Jerusalem Academic Press, Jerusalem. MR0139079
}

\bibitem{E1965}
{
B. Efron, The convex hull of a random set of points, Biometrika {\bf 52} (1965), 331--343. MR0207004
}


\bibitem{Gluskin}
{
E.~D. Gluskin, The diameter of the Minkowski compactum is roughly equal to $n$, Funktsional. Anal. i Prilozhen. {\bf 15} (1981), no.~1, 72--73. MR0609798
}

\bibitem{GKZ20}
{
T. Godland, Z. Kabluchko, D. Zaporozhets,
Angle sums of random polytopes,
arXiv:2007.02590
}


\bibitem{HMR04}
{
D. Hug, G. Munsonius\ and\ M. Reitzner, Asymptotic mean values of Gaussian polytopes, Beitr\"{a}ge Algebra Geom. {\bf 45} (2004), no.~2, 531--548. MR2093024
}

\bibitem{JvN35}
{
P. Jordan\ and\ J. von~Neumann, On inner products in linear, metric spaces, Ann. of Math. (2) {\bf 36} (1935), no.~3, 719--723. MR1503247
}

\bibitem{Ka2021}
{
Z.~A. Kabluchko, Angles of random simplices and face numbers of random polytopes, Adv. Math. {\bf 380} (2021), Paper No. 107612, 68 pp. MR4205707
}

\bibitem{KZ19}
{
Z.~A. Kabluchko\ and\ D.~N. Zaporozhets, Expected volumes of Gaussian polytopes, external angles, and multiple order statistics, Trans. Amer. Math. Soc. {\bf 372} (2019), no.~3, 1709--1733. MR3976574
}

\bibitem{KZ2020}
{
Z.~A. Kabluchko\ and\ D.~N. Zaporozhets, Absorption probabilities for Gaussian polytopes and regular spherical simplices, Adv. in Appl. Probab. {\bf 52} (2020), no.~2, 588--616. MR4123647
}


\bibitem{Khatri}
{
Khatri, C. G. On certain inequalities for normal distributions and their applications to simultaneous confidence bounds. Ann. Math. Statist. 38 (1967), 1853--1867.
}


\bibitem{K1994}
{
K.-H. K\"{u}fer, On the approximation of a ball by random polytopes, Adv. in Appl. Probab. {\bf 26} (1994), no.~4, 876--892. MR1303867
}

\bibitem{Kwapien}
{
S. Kwapie\'{n}, Isomorphic characterizations of inner product spaces by orthogonal series with vector valued coefficients, Studia Math. {\bf 44} (1972), 583--595. MR0341039
}

\bibitem{LGl}
{
R. Lata\l a\ et al., Banach-Mazur distances and projections on random subgaussian polytopes, Discrete Comput. Geom. {\bf 38} (2007), no.~1, 29--50. MR2322114
}




\bibitem{LPRT2005}
{
A.~E. Litvak\ et al., Smallest singular value of random matrices and geometry of random polytopes, Adv. Math. {\bf 195} (2005), no.~2, 491--523. MR2146352
}

\bibitem{MTJD}
{
P. Mankiewicz\ and\ N. Tomczak-Jaegermann, A solution of the finite-dimensional homogeneous Banach space problem, Israel J. Math. {\bf 75} (1991), no.~2-3, 129--159. MR1164587
}



\bibitem{MP76}
{
B. Maurey\ and\ G. Pisier, S\'{e}ries de variables al\'{e}atoires vectorielles ind\'{e}pendantes et propri\'{e}t\'{e}s g\'{e}om\'{e}triques des espaces de Banach, Studia Math. {\bf 58} (1976), no.~1, 45--90. MR0443015
}



\bibitem{Mil}
{
V.~D. Milman, A new proof of A. Dvoretzky's theorem on cross-sections of convex bodies, Funkcional. Anal. i Prilo\v{z}en. {\bf 5} (1971), no.~4, 28--37. MR0293374
}

\bibitem{Muller89}
{
J.~S. M\"{u}ller, On the mean width of random polytopes, Probab. Theory Related Fields {\bf 82} (1989), no.~1, 33--37. MR0997430
}




\bibitem{Retzner2005-}
{
M. Reitzner, Central limit theorems for random polytopes, Probab. Theory Related Fields {\bf 133} (2005), no.~4, 483--507. MR2197111
}

\bibitem{Retzner2005}
{
M. Reitzner, The combinatorial structure of random polytopes, Adv. Math. {\bf 191} (2005), no.~1, 178--208. MR2102847
}

\bibitem{Reitzner2010}
{
M. Reitzner, Random polytopes, in {\it New perspectives in stochastic geometry}, 45--76, Oxford Univ. Press, Oxford. MR2654675
}

\bibitem{RS1963}
{
A. R\'{e}nyi\ and\ R. Sulanke, \"{U}ber die konvexe H\"{u}lle von $n$ zuf\"{a}llig gew\"{a}hlten Punkten, Z. Wahrsch. Verw. Gebiete {\bf 2} (1963), 75--84 (1963). MR0156262
}

\bibitem{Sidak}
{
Z. \v{S}id\'{a}k, Rectangular confidence regions for the means of multivariate normal distributions, J. Amer. Statist. Assoc. {\bf 62} (1967), 626--633.
}

\bibitem{Schneider2008}
{
R. Schneider, Recent results on random polytopes, Boll. Unione Mat. Ital. (9) {\bf 1} (2008), no.~1, 17--39. MR2387995
}

\bibitem{SW2008}
{
R. Schneider\ and\ W. Weil, {\it Stochastic and integral geometry}, Probability and its Applications (New York), Springer, Berlin, 2008. MR2455326
}

\bibitem{Szarek81}
{
S.~J. Szarek, Nets of Grassmann manifold and orthogonal group, in {\it Proceedings of research workshop on Banach space theory (Iowa City, Iowa, 1981)}, 169--185, Univ. IA, Iowa City, IA. MR0724113
}


\bibitem{WW93}
{
W. Weil\ and\ J.~A. Wieacker, Stochastic geometry, in {\it Handbook of convex geometry, Vol. A, B}, 1391--1438, North-Holland, Amsterdam. MR1243013
}

\bibitem{Ver18}
{
R. Vershynin, {\it High-dimensional probability}, Cambridge Series in Statistical and Probabilistic Mathematics, 47, Cambridge Univ. Press, Cambridge, 2018. MR3837109
}

\bibitem{Vu2005}
{
V.~H. Vu, Sharp concentration of random polytopes, Geom. Funct. Anal. {\bf 15} (2005), no.~6, 1284--1318. MR2221249
}

\bibitem{Vu2006}
{
V.~H. Vu, Central limit theorems for random polytopes in a smooth convex set, Adv. Math. {\bf 207} (2006), no.~1, 221--243. MR2264072
}



\end{thebibliography}
\end{document}